\newcommand{\C}{\mathbb{C}} 
\newcommand{\A}{\mathbb{A}}
\renewcommand{\P}{\mathbb{P}}
\newcommand{\Z}{\mathbb{Z}}
\newcommand{\Q}{\mathbb{Q}}
\newcommand{\G}{\mathbb{G}}
\newcommand{\iso}{\cong}
\newcommand{\mcal}[1]{\mathcal{#1}}
\newcommand{\ilim}{\mathop{\varprojlim}\limits}
\newtheorem{theorem}{Theorem}
 \newtheorem{lemma}[theorem]{Lemma}
 \newtheorem{proposition}[theorem]{Proposition}
 \newtheorem{corollary}[theorem]{Corollary}
 \theoremstyle{definition}
 \newtheorem{definition}[theorem]{Definition}
 \newtheorem{example}[theorem]{Example}
 \theoremstyle{remark}
 \newtheorem{remark}[theorem]{Remark}
\begin{document}


\title{Equivariant Algebraic Cobordism}
\author{Jeremiah Heller}
\email{heller@math.uni-wuppertal.de}
\author{Jos\'e Malag\'on-L\'opez}
\email{jmalagon@uottawa.ca}



\begin{abstract}
We construct an equivariant algebraic cobordism theory for schemes
with an action by a linear algebraic group over a field of
characteristic zero.  
\end{abstract}
\maketitle


\pagestyle{headings}


\section{Introduction}

In \cite{lev&mor-book} Levine-Morel construct a theory $\Omega_{*}$,
called algebraic cobordism, for schemes over a field of
characteristic zero. This theory is constructed so that its
restriction to smooth schemes is the universal oriented cohomology
theory. 
In this paper we extend their construction to the equivariant setting
and define an equivariant algebraic cobordism $\Omega^{G}_{*}$ for
schemes equipped with an action of a linear algebraic group $G$. 
Our construction is based on an idea used by Totaro \cite{tot-chow} 
to define Chow groups of a classifying space and Edidin-Graham  
\cite{edd&gra-eit} to define equivariant Chow groups.  
Their construction is motivated by the construction of equivariant
cohomology theories in algebraic topology using the Borel
construction. 
The Borel construction of a manifold $M$ with action by a Lie group
$G$ is the space $(M\times EG)/G$, where $EG$ is a contractible
$G$-space with free $G$-action. 
In the algebro-geometric setting, the classifying space of a linear
algebraic group cannot exist as a scheme. 
However reasonable approximations of the classifying space do exist
and the idea of the construction of the equivariant Chow groups is to
approximate the Borel construction. 
Chow groups have the property that the groups $\mathrm{CH}_{n}(X)$
vanish when $n>\dim X$. 
Because of this, in a fixed degree, there is a finite dimensional
approximation to the Borel construction which is a sufficiently good
approximation. 
On the other hand algebraic cobordism 
$\Omega_{n}(X)$ can be nonzero for arbitrarily large values of $n$. 
This leads us to define equivariant algebraic cobordism
$\Omega_{*}^{G}(X)$ in Section \ref{EAC} as the limit 
\[
\Omega_*^G (X) = 
  \varprojlim_{i}\Omega_* \left( X \times^G U_i \right)
\] 
over successively better approximations to the Borel construction. 
This involves making a choice of a sequence of approximations to the
Borel construction and we show in Theorem \ref{eacwelldef} that our
definition does not depend on the choice of sequence of
approximations. 
The rest of the section is devoted to some basic computations of the
resulting equivariant algebraic cobordism groups. 

In Section \ref{secfour} we establish the basic properties of
equivariant algebraic cobordism. 
It has all of the equivariant analogues of the basic properties
expected of an oriented Borel-Moore homology theory. 
These include pull-back maps for equivariant l.c.i.-morphisms,
push-forwards for equivariant projective morphisms and appropriate
compatibilities with pull-backs, homotopy invariance, projective
bundle formula, and a localization exact sequence. 
Additionally, equivariant algebraic cobordism has
properties that one would expect from an equivariant cohomology theory
for $G$-schemes, for example it is equipped with natural restriction
and induction maps.  

Suppose that $X\to X/G$ is a geometric quotient of $X$ by a proper
action of a reductive group $G$. 
There is in this case a natural map 
$\Omega_{*}(X/G) \to \Omega_{*+\textrm{dim}(G)}^{G}(X$) 
relating the algebraic cobordism of $X/G$ to the  equivariant
algebraic cobordism of $X$.  
In Theorem \ref{propact} this map is shown to be an isomorphism
with rational coefficients. 
As a consequence we obtain a naturally defined ring structure on the
rational algebraic cobordism of the quotient $X/G$ of a smooth scheme.  

If $G$ is split reductive, with maximal torus $T$, Weyl group $W$, and
$X$ is a smooth $G$-scheme then we show in Theorem \ref{the-loc} that
$\Omega^{*}_{G}(X)_{\Q}\cong\Omega^{*}_{T}(X)^{W}_{\Q}$. 
If $G$ is special then this is an injection integrally, and under
certain assumptions (see Remark \ref{rem-gsp}) can be shown to be
an isomorphism. 
In order to prove this result we require a cobordism version of the
Leray-Hirsch type theorem for  smooth projective fibrations $p:X\to Y$
with cellular fiber $F$. 
This is done in Proposition \ref{lhrat} where we show that 
$\Omega^{*}(X)_{\Q} =
\Omega^{*}(Y)_{\Q}\otimes_{\Omega^{*}(k)_{\Q}}\Omega^{*}(F)_{\Q}$ 
and it is an integral isomorphism if the fibration is Zariski-locally
trivial. 

We finish in Section \ref{ROT} with a brief discussion of oriented
equivariant Borel-Moore homology theories.
We focus on the examples of such theories that arise from Chow groups
and algebraic $K$-theory, and its relations with equivariant Chow
groups \cite{edd&gra-eit} and equivariant $K$-theory \cite{tho-ekt}.

Independently, another definition of equivariant algebraic cobordism 
has been given in \cite{Krishna}.
The version there is based on Desphande's definition of algebraic
cobordism for classifying spaces \cite{des-accs}, which involves
taking an inverse limit over approximations to the Borel construction
as well as quotients by a niveau filtration. 
That definition gives an equivariant algebraic cobordism theory
isomorphic to the one we have defined here, see Remark
\ref{krishcompare}.    

\vspace{.2cm}
{\bf Notation and Conventions.}
Throughout $k$ will denote a field of characteristic zero.
Let {\bf Sch}$_k$ denote the category whose objects are  separated,
quasi-projective  schemes of finite type over $k$, and let 
{\bf Sm}$_k$ be the full-subcategory of smooth quasi-projective
$k$-schemes.  
If $A = \lim_{i}A_{i}$ is an inverse limit of groups, we
write $A_{\Q} =\lim_{i}(A_{i}\otimes \Q)$. 

\vspace{.2cm}
{\bf Acknowledgements.}
We thank J.F. Jardine for his kind support during the preparation of
this work. We are grateful to the referee for helpful comments and
suggestions.  
The second author benefited from discussions with R. Gonzales, for
which he is grateful.


\section{Preliminaries}\label{preliminaries}

\subsection{Algebraic Cobordism}\label{OCT&AC}

In this section we recall definitions and terminology from 
\cite{lev&mor-book}.

\subsubsection{}\label{OBM}

Let {\bf Ab}$_*$ be the category of graded Abelian groups.
Let $\mathbf{Sch}_k'$ be the subcategory of ${\mathbf{Sch}}_k$ which
has the same objects and the morphisms are the projective morphisms. 
An \emph{oriented Borel-Moore homology theory (OBM)} on $\mathbf{Sch}_k$ 
consists of an additive functor $A_* : {\bf Sch}_k' \to {\bf Ab}_*$, 
pull-backs maps $f^*: A_* (X) \to A_{* + d} (Y)$ for each
l.c.i. morphism $f: Y \to X$ of relative dimension $d$,
and an associative and commutative external product 
$A_* (X) \otimes A_* (Y) \to A_* (X \times Y)$, 
$u \otimes v \mapsto u \times v$, with unit $1 \in A_0 (k)$.

These data satisfy certain axioms:
functoriality of pull-back maps, 
compatibility of push-forwards maps and pull-backs maps in transverse
Cartesian squares, compatibility of variances with product of schemes,
and 
\begin{enumerate}
\item[(EH)] \textit{Extended homotopy.} Let $E$ be a vector bundle of
  rank $r$ over $X$ in {\bf Sch}$_k$. 
  Let $p: V \to X$ be a $E$-torsor. 
  Then the induced morphism $p^* :A_* (X) \to A_{* + r} (V)$ is an
  isomorphism.    
 
\item[(PB)] \textit{Projective Bundle Formula.} Let $E$ be a vector
  bundle of rank $r + 1$ over $X$ in {\bf Sch}$_k$.
  Let $q : \mathbb{P} (E) \to X$ be the associated projective space. 
  Consider the canonical quotient line bundle 
  $\mathcal{O}(1) \to \mathbb{P} (E)$ with zero section 
  $s : \mathbb{P} (E) \to \mathcal{O} (1)$. 
  Consider the operator 
  $\xi : A_* (\mathbb{P} (E)) \to A_{*-1} ( \mathbb{P}(E))$, 
  $\eta \mapsto s^* ( s_* (\eta))$.
  For $i = 0 , \ldots, r$, let
\[
  \xymatrix{
            \xi^{(i)} : A_{* + i - r} (X) \ar[r]^{q^*} &
            A_{* + i} (\mathbb{P} (E)) \ar[r]^{\xi^i} &
            A_* (\mathbb{P} (E)).
           }
\]
  Then $\sum_{i=0}^r \xi^{(i)} : \oplus_{i=0}^r A_{* + i - r} (X)
  \to A_* (\mathbb{P} (E))$ is an isomorphism.

\item[(CD)] \textit{Cellular Decomposition.} Let $r,N > 0$. 
  Consider $W = \mathbb{P}^N \times \cdots \times \mathbb{P}^N$ with
  $r$ factors, and let $p_i : W \to \mathbb{P}^N$ be the $i$-th
  projection. 
  Let $X_0, \ldots, X_N$ be the standard homogeneous coordinates on
  $\mathbb{P}^N$. 
  Let $n_1, \ldots, n_r$ be non-negative integers, and let 
  $i : E \to W$ be the subscheme defined by 
  $\prod_{i=1}^r p_i^* (X_N)^{n_i} = 0$. 
  Then $i_* : A_* (E) \to A_* (W)$ is injective.
\end{enumerate}

A \emph{morphism} of OBMs is a natural transformation of functors.
Given an OBM $A_*$, the \emph{Chern class endomorphism} 
$\tilde{\mathrm{c}}_1 (L) : A_* (X) \to A_{*-1} (X)$ associated to a
line bundle $L\to X$ is given by 
$\tilde{\mathrm{c}}_1 (L) = s^{*}s_{*}$ where $s:X\to L$ is the
zero-section. 
There is an infinite series $F_A (u,v)$ with coefficients in $A_*
(k)$, such that for any line bundles $L,M$ over $X$, 
$\tilde{\mathrm{c}}_1 (L \otimes M) = 
F_A \left( \tilde{\mathrm{c}}_1 (L) , \tilde{\mathrm{c}}_1(M)
\right)$.

\subsubsection{}\label{OCT}

Let {\bf CRng}$^*$ be the category whose objects are 
commutative graded rings with unit and maps the ring morphisms.
An \emph{oriented cohomology theory (OCT)} on {\bf Sm}$_k$ consists of
an additive functor 
$A^* : \left( {\bf Sm}_k \right)^{{\rm op}} \to {\bf CRng}^*$, 
endowed with morphism of graded $A^*(X)$-modules
$f_* : A^* (Y) \longrightarrow A^{* + d} (X)$ for each
projective morphism $f : Y \to X$ of relative codimension $d$
satisfying certain axioms: 
functoriality of push-forwards maps, 
compatibility of variances in transverse Cartesian squares,   
as well as the analogues of the extended homotopy axiom (EH) and the
projective bundle formula (PB).   

A \emph{morphism} of OCTs is a natural transformation of
contravariant functors that also commutes with the push-forwards maps. 
Setting $\mathrm{c}_1 (L) := \tilde{\mathrm{c}}_1 (L) (1)$ we obtain a 
\emph{first Chern class} element.
For every OCT $A^*$ the pair $(A^*(k) , F_A )$ defines a formal group
law (\cite[Lemma [1.1.3]{lev&mor-book}).
Thus, there is a unique ring morphism $\theta_A : \mathbb{L} 
\to A^* (k)$ classifying $(A^*(k) , F_A )$.

\subsubsection{}\label{OBM&OCT}

Let $A_*$ be an OBM and let $X$ be in $\mathbf{Sm}_k$ be of pure
dimension $d$. 
Set $A^n (X) : = A_{d-n} (X)$ and extend this definition to any smooth
scheme by additivity over the connected components. 
Let $\delta_X : X \to X \times X$ be the diagonal morphism. 
The product $a \cup_X b := \delta^* (a \times b)$ makes $A^*(X)$ a
commutative ring with unit $1_X := p^* (1)$, where  
$p : X \to \mathrm{Spec} (k)$ is the structural morphism.  

\begin{proposition}[Levine-Morel]
The correspondence $A_* \mapsto A^*$ gives an equivalence of the
category of OBMs on $\mathbf{Sm}_k$  with the category of OCTs on
$\mathbf{Sm}_k$.
\end{proposition}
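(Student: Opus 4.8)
The plan is to construct functors in both directions and verify they are mutually inverse. The passage from an OBM $A_*$ to an OCT $A^*$ is already spelled out in \S\ref{OBM&OCT}: for a smooth scheme $X$ of pure dimension $d$ set $A^n(X) := A_{d-n}(X)$, with cup product defined via pullback along the diagonal, ring unit $1_X = p^*(1)$, and pushforwards $f_*$ along projective morphisms obtained directly from the covariant structure of $A_*$ (a projective $f\colon Y\to X$ of relative codimension $c$ is a projective morphism of the underlying schemes, so $f_*\colon A_*(Y)\to A_*(X)$ reindexes to $f_*\colon A^*(Y)\to A^{*+c}(X)$). For the reverse direction, given an OCT $A^*$, one defines $A_*$ on $\mathbf{Sm}_k$ by $A_n(X) := A^{d-n}(X)$ for $X$ of pure dimension $d$; the external product, the l.c.i. pullbacks, and the projective pushforwards all transcribe under this reindexing. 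So the first step is to carefully set up these two assignments on objects and morphisms.

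Next I would check that each assignment lands in the right category and respects all the structure. For $A_*\mapsto A^*$: verify that $\cup_X$ is associative, commutative, and unital (this uses the associativity/commutativity of the external product together with compatibility of pullbacks with products and the projection formula), that $f^*$ for $f$ a morphism of smooth schemes is a ring homomorphism (again via the diagonal and transverse base-change), that $f_*$ is $A^*(X)$-linear (the projection formula), and that functoriality and transverse base-change for $f_*$ hold — all of these are immediate repackagings of the OBM axioms. One must also check (EH) and (PB) for $A^*$ transcribe from (EH) and (PB) for $A_*$, which is a matter of reindexing degrees. The same bookkeeping, run backwards, handles $A^*\mapsto A_*$; here the one point needing a small argument is that an OCT a priori only gives pullbacks along \emph{all} morphisms of smooth schemes, whereas an OBM wants l.c.i. pullbacks along morphisms $Y\to X$ with $Y$ smooth — but for $X,Y\in\mathbf{Sm}_k$ every morphism is l.c.i., and one defines $f^*$ in the OBM sense to be the $A^*$-pullback, checking the relative dimension shift matches. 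That the composite $A_*\mapsto A^*\mapsto A_*$ is the identity is then a direct computation from the definitions $A_n(X) = A^{d-n}(X) = A_{d-(d-n)}(X)$, and similarly for the other composite; these equalities are literal once the structure maps are unwound.

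The main obstacle is not any single deep step but rather the sheer amount of compatibility-checking: one must confirm that \emph{every} structure map and \emph{every} axiom on one side corresponds precisely, under the degree-reindexing $n\leftrightarrow d-n$, to its counterpart on the other side, including the behaviour of the formal group law $F_A$ and the first Chern classes $\mathrm{c}_1(L) = \tilde{\mathrm c}_1(L)(1)$ (these must be preserved so that morphisms of OBMs correspond to morphisms of OCTs, which is needed for the \emph{equivalence} — not just bijection on objects — claim). The genuinely delicate point is the additivity clause: $A^*$ is only defined on pure-dimensional schemes by the naive formula and then extended by additivity over connected components, so one must check this extension is well-defined and that all structure maps respect the decomposition into components; morphisms between smooth schemes need not preserve dimension locally, so the reindexing shift $d-n$ must be handled component-by-component on both source and target. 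Once that is set up consistently, the verification that the two functors are quasi-inverse is formal. Since this is essentially a transcription of \cite[\S 5]{lev&mor-book}, I would state the correspondence precisely and refer there for the routine verifications, highlighting only the additivity and formal-group-law compatibilities as the points that require care.
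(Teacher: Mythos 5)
Your proposal is correct and takes essentially the same route as the source: the paper states this proposition as a quoted result of Levine--Morel without giving a proof, and your argument --- the reindexing $A^n(X)=A_{d-n}(X)$, the product via the diagonal, the observation that every morphism in $\mathbf{Sm}_k$ is l.c.i. so the two kinds of pull-backs coincide, and the axiom-by-axiom bookkeeping with care for additivity over connected components --- is precisely the transcription argument in Levine--Morel's book to which the paper implicitly defers.
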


\subsubsection{}\label{AC}

We summerize the main properties of algebraic cobordism in the following

\begin{theorem}[Levine-Morel]
Let $k$ be a field that admits resolution of singularities.
\begin{enumerate}
\item There is a universal oriented Borel-Moore homology theory on 
  {\bf Sch}$_k$, $X \mapsto \Omega_* (X)$, called 
  \emph{algebraic cobordism}.
  The restriction of $\Omega_*$ to {\bf Sm}$_k$
  yields the universal oriented cohomology theory $\Omega^*$ on 
  {\bf Sm}$_k$.

\item Let $F_{\Omega}$ be the formal group law associated to
  $\Omega^*$. 
  Then the morphism
  $\theta_{\Omega}:  \mathbb{L} \longrightarrow \Omega^* (k)$ 
  classifying $F_{\Omega}$ is an isomorphism.

\item (Localization Sequences) If $\imath : Z \to X$ is a
  closed immersion and 
  $j: U := X - Z\to X$ is the open complement, then the following
  sequence is exact:   
\[
  \xymatrix@1{
              \Omega_* (Z) \ar[r]^{\imath_*}
              & 
	      \Omega_* (X) \ar[r]^{j^*} &
	      \Omega_* (U) \ar[r] &
	      0
	     }.
\tag{LS}
\]
\end{enumerate}
\end{theorem}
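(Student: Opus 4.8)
The plan is to recall the structure of Levine-Morel's construction in \cite{lev&mor-book}, of which this theorem is a summary, and to isolate the steps that carry the real weight. For (1) one builds $\Omega_*$ by generators and relations. Fix $X$ in $\mathbf{Sch}_k$ and let $\mathcal{Z}_*(X)$ be the free abelian group on isomorphism classes of \emph{cobordism cycles} $(f\colon Y\to X, L_1,\dots,L_r)$ with $Y$ smooth and equidimensional over $k$, $f$ projective, and the $L_i$ line bundles on $Y$, graded so that such a cycle has degree $\dim_k Y - r$. This group carries an evident $\mathbb{L}$-module structure, a projective pushforward by composition in the $f$-slot, a smooth (and, more generally, l.c.i.) pullback by base change of $Y$, an external product by fibre product, and Chern operators $\tilde{\mathrm{c}}_1(L)$ obtained by appending $L$ to the list. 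One then defines $\Omega_*$ as the quotient of $\mathbb{L}\otimes\mathcal{Z}_*$ by the relations encoding the formal group law $\tilde{\mathrm{c}}_1(L\otimes M)=F_\Omega(\tilde{\mathrm{c}}_1 L,\tilde{\mathrm{c}}_1 M)$ and the section axiom, which relates $\tilde{\mathrm{c}}_1(L)$ acting on $Y$ to the inclusion of a smooth divisor cut out by a section of $L$ (equivalently, one may present $\Omega_*(X)$ using only classes $[f\colon Y\to X]$ with $Y$ smooth and $f$ projective, modulo double point degeneration relations). The substantive point is that the resulting functor satisfies (EH), (PB) and (CD); here resolution of singularities and the weak factorization theorem do the heavy lifting, being used to put cobordism cycles and their l.c.i.\ pullbacks into standard form and to control the relations among them.

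For universality, given any OBM $A_*$ one defines $\vartheta_A\colon\Omega_*\to A_*$ on representatives by
\[
(f\colon Y\to X, L_1,\dots,L_r)\ \longmapsto\ f_*\big(\tilde{\mathrm{c}}_1(L_1)\cdots\tilde{\mathrm{c}}_1(L_r)(1_Y)\big),
\]
where $1_Y\in A_*(Y)$ is the image of $1\in A_0(k)$ under smooth pullback. The OBM axioms are precisely what is needed for this assignment to kill each of the imposed relations and to define a morphism of OBMs, and it is visibly the only possibility on the generating cobordism cycles; hence $\Omega_*$ is the universal OBM. Restricting to $\mathbf{Sm}_k$ and passing through the equivalence between oriented Borel-Moore homology theories and oriented cohomology theories on $\mathbf{Sm}_k$ recalled above then identifies $\Omega^*$ with the universal OCT.

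For (2), surjectivity of $\theta_\Omega\colon\mathbb{L}\to\Omega^*(k)$ is comparatively direct: using the degree formula one shows that $\Omega_*(\mathrm{Spec}\,k)$ is generated, modulo decomposables, by the coefficients of $F_\Omega$, and these lie in the image of $\theta_\Omega$. Injectivity is the deep point, and proceeds by comparison with complex cobordism: when $k$ embeds in $\mathbb{C}$ the assignment $[Y\to\mathrm{Spec}\,k]\mapsto[Y(\mathbb{C})]$ defines a ring homomorphism $\Omega^*(k)\to\mathrm{MU}^{2*}(\mathrm{pt})\cong\mathbb{L}$ whose composite with $\theta_\Omega$ is Quillen's isomorphism, forcing $\theta_\Omega$ to be injective; the general characteristic-zero case follows by descending to a finitely generated subfield of $k$ and embedding it in $\mathbb{C}$, using compatibility of the construction with such base change.

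Finally, for (3): exactness of (LS) at $\Omega_*(U)$, that is surjectivity of $j^*$, holds because any cobordism cycle over $U$ is the restriction of one over $X$ --- take the closure of $Y$ in a compactification and resolve. Exactness at $\Omega_*(X)$, namely that a class restricting to $0$ over $U$ lies in $\imath_*\Omega_*(Z)$, is the most delicate result in \cite{lev&mor-book}; it again rests on resolution of singularities and weak factorization, which are used to trace a vanishing cobordism relation on $U$ back to a relation on $X$ modulo cycles supported on $Z$. I expect this localization statement, together with the injectivity in (2), to be the main obstacles, since each genuinely uses inputs --- resolution of singularities and the topological realization --- beyond the formal calculus of cobordism cycles.
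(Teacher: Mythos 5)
The paper itself offers no proof of this theorem: it is recalled verbatim as background and attributed to Levine--Morel \cite{lev&mor-book}, so the only meaningful comparison is with the source, and on that score your outline is a faithful roadmap. The generators-and-relations construction on cobordism cycles $(f\colon Y\to X,L_1,\dots,L_r)$, the universal transformation $\vartheta_A$ given by $f_*\bigl(\tilde{\mathrm{c}}_1(L_1)\cdots\tilde{\mathrm{c}}_1(L_r)(1_Y)\bigr)$, surjectivity of $\theta_\Omega$ via degree-formula arguments, and the identification of the localization theorem (exactness at $\Omega_*(X)$) as the technically hardest ingredient, with surjectivity of $j^*$ obtained by taking closures and resolving, are all the right landmarks. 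Three remarks on where your sketch compresses or diverges from \cite{lev&mor-book}. First, the double point degeneration presentation is a subsequent theorem of Levine--Pandharipande rather than an alternative starting definition inside the book, and it is there that weak factorization is the decisive extra input; the book's own development leans on resolution of singularities (you also suppress the dimension axiom among the imposed relations, which is harmless in a sketch but is part of the actual presentation). Second, for injectivity of $\theta_\Omega$ your route through complex realization and Quillen's theorem is legitimate --- one must check that $X\mapsto \mathrm{MU}^{2*}(X(\C))$ is an OCT on $\mathbf{Sm}_k$ for $k\subseteq\C$, and your finitely-generated-subfield colimit argument handles general characteristic zero --- but an entirely algebraic detection is also available and is closer in spirit to how the coefficients are exploited in this paper: a Chow theory twisted by the universal inverse Todd class (cf.\ Example \ref{exa-tt}) has formal group law over $\Z[\mathbf{t}]$ whose classifying map $\mathbb{L}\to\Z[\mathbf{t}]$ is injective, so composing $\mathbb{L}\to\Omega^*(k)\to\Z[\mathbf{t}]$ gives injectivity without leaving algebraic geometry. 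Third, in the surjectivity-of-$j^*$ step one also needs to extend the line bundles $L_i$ from $Y$ to its resolved compactification; this is fine since $Y$ sits as a smooth open subscheme of the resolution, so divisor classes extend, but it deserves a word. None of these points is a gap in substance; they are refinements of attribution and of where the genuinely hard inputs sit.
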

As an Abelian group, $\Omega_* (X)$ is generated by  isomorphism
classes ${\rm M}_n (X)$ of projective morphisms $Y \to X$, with $Y$ in
{\bf Sm}$_k$ and irreducible of dimension $n$.

\subsubsection{}\label{UOCT}

Given any formal group law $(R,F_R)$, the canonical morphism 
$\Omega^* (k) \to R$ induces an OCT $\, \Omega^* \otimes_{\mathbb{L}} R$,
defined by $X\mapsto \Omega^{*}(X)\otimes_{\mathbb{L}} R$, which is
universal for OCTs with formal group law $(R,F_R)$.  

\begin{example}\label{exa-uni-oct-fgl}
\begin{enumerate}
\item Let $\Omega^*_+ := \Omega^* \otimes_{\mathbb{L}} \mathbb{Z}$ be
  the OCT classifying the additive formal group law 
  $(\mathbb{Z} , u+v)$. 
  Then we have a canonical morphism $\Omega^*_+ \to {\rm CH}^*$.

\item Let $\Omega^*_{\times} : = \Omega^* \otimes_{\mathbb{L}}
  \mathbb{Z} [\beta, \beta^{-1}]$ be the OCT 
  classifying the multiplicative periodic formal group law 
  $(\mathbb{Z} [\beta, \beta^{-1}] , u + v - uv \beta)$. 
  Then we have a canonical morphism 
  $\Omega^*_{\times} \to {\rm K}^0 [\beta, \beta^{-1}]$.   
\end{enumerate}
\end{example}

\begin{theorem}[Levine-Morel]
Let $k$ be a field of characteristic zero. Then, the canonical
morphisms $\Omega^*_+ \to {\rm CH}^*$ and 
$\Omega^*_{\times} \to {\rm K}^0 [\beta, \beta^{-1}]$ are
isomorphisms of OCTs. 
\end{theorem}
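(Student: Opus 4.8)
The plan is to leverage the universal property recalled in \ref{UOCT}. The Chow ring $\mathrm{CH}^{*}$ and $\mathrm{K}^{0}[\beta,\beta^{-1}]$ are oriented cohomology theories on $\mathbf{Sm}_{k}$ whose associated formal group laws are the additive law $(\mathbb{Z},u+v)$ and the multiplicative periodic law $(\mathbb{Z}[\beta,\beta^{-1}],u+v-\beta uv)$ of Example~\ref{exa-uni-oct-fgl}; hence the canonical morphisms $\Omega^{*}_{+}\to\mathrm{CH}^{*}$ and $\Omega^{*}_{\times}\to\mathrm{K}^{0}[\beta,\beta^{-1}]$ come for free, and all that remains is to check bijectivity. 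Via the equivalence of \ref{OBM&OCT} I would reformulate this on Borel--Moore homology: it suffices to prove that the induced morphism of oriented Borel--Moore homology theories $\Omega_{*}(X)\otimes_{\mathbb{L}}\mathbb{Z}\to\mathrm{CH}_{*}(X)$ is an isomorphism for every $X$ in $\mathbf{Sch}_{k}$ (and likewise for its $\mathrm{K}$-theoretic analogue), which makes sense because $k$, being of characteristic zero, admits resolution of singularities, so the theory of \ref{AC} is available. I would carry out the Chow case in detail and run the $\mathrm{K}$-theory case on the same template, replacing the fundamental class $[Z]$ of a subvariety by the class $[\mathcal{O}_{Z}]$ of its structure sheaf and using that $[\mathrm{R}f_{*}\mathcal{O}_{\tilde{Z}}]\equiv[\mathcal{O}_{Z}]$ modulo classes of sheaves supported in strictly smaller dimension.

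Surjectivity is the easy half: $\mathrm{CH}_{n}(X)$ is generated by the classes $[Z]$ of integral closed subschemes $Z\subseteq X$ of dimension $n$, and resolving each $Z$ by a projective birational $\pi\colon\tilde{Z}\to Z$ with $\tilde{Z}$ smooth produces a generator $\tilde{Z}\to Z\hookrightarrow X$ of $\Omega_{n}(X)$ mapping to $\pi_{*}[\tilde{Z}]=[Z]$. The substantial input is the generalized degree formula of \cite{lev&mor-book}: by induction on dimension it shows that $\Omega_{*}(X)$ is generated over $\mathbb{L}$ by these ``resolution classes'' $[\tilde{Z}\to X]$, and that such a class is independent of the chosen resolution modulo the ideal $\widetilde{\mathbb{L}}\subset\mathbb{L}$ generated by the classes of positive-dimensional smooth projective varieties. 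Since $\widetilde{\mathbb{L}}$ is killed by $\otimes_{\mathbb{L}}\mathbb{Z}$, the group $\Omega_{*}(X)\otimes_{\mathbb{L}}\mathbb{Z}$ is generated by \emph{unambiguous} classes $[\tilde{Z}\to X]$.

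This makes it possible to define a candidate inverse $\phi\colon\mathrm{CH}_{*}(X)\to\Omega_{*}(X)\otimes_{\mathbb{L}}\mathbb{Z}$ by $\phi([Z])=[\tilde{Z}\to X]$; by the previous paragraph this is well defined on cycles, so the only thing left to verify is that $\phi$ annihilates rational equivalence. A generating rational equivalence arises from an integral subscheme $\mathcal{W}\subseteq X\times\mathbb{P}^{1}$ dominating $\mathbb{P}^{1}$, with associated cycle $[\mathcal{W}_{0}]-[\mathcal{W}_{\infty}]$; I would resolve $\mathcal{W}$ and then combine the localization sequence (LS) of \ref{AC} for $X\times\{0\}\hookrightarrow X\times\mathbb{P}^{1}\hookleftarrow X\times\mathbb{A}^{1}$ with the homotopy isomorphism $\Omega_{*}(X\times\mathbb{A}^{1})\iso\Omega_{*+1}(X)$ to force $[\widetilde{\mathcal{W}}_{0}\to X]=[\widetilde{\mathcal{W}}_{\infty}\to X]$ in $\Omega_{*}(X)\otimes_{\mathbb{L}}\mathbb{Z}$. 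Once $\phi$ is in hand, its composite with the canonical morphism is the identity on cycle classes, while the reverse composite is the identity on the resolution classes, which generate; hence both maps are isomorphisms. The $\mathrm{K}$-theory argument is word for word the same, with the multiplicative periodic law in place of the additive one.

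The hard part is precisely the generalized degree formula, which underwrites both the generation of $\Omega_{*}(X)\otimes_{\mathbb{L}}\mathbb{Z}$ by resolution classes and their independence of the resolution; its proof in \cite{lev&mor-book} is one of the genuinely deep steps and rests on weak factorization. The vanishing of $\phi$ on rational equivalences is a second, milder obstacle, and leans on the localization sequence and homotopy invariance for $\Omega_{*}$. Since both ingredients are structural theorems rather than computations, the proof amounts to assembling the Levine--Morel machinery, with a concluding sanity check on $\mathrm{Spec}\,k$, where the two sides collapse to $\mathbb{Z}$ and $\mathbb{Z}[\beta,\beta^{-1}]$ respectively, in accordance with the isomorphism $\Omega^{*}(k)\iso\mathbb{L}$ of \ref{AC}.
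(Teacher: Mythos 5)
First, a point of reference: the paper does not prove this statement at all — it is quoted verbatim from Levine--Morel \cite{lev&mor-book} as background — so your sketch has to be measured against their proof. For the Chow half your reconstruction is essentially their route: universality gives the map, the generalized degree formula gives generation of $\Omega_*(X)\otimes_{\mathbb{L}}\mathbb{Z}$ by resolution classes and their independence of the resolution modulo $\widetilde{\mathbb{L}}$, and the inverse is defined on cycles by $[Z]\mapsto[\tilde Z\to X]$. One caveat: the rational-equivalence step is not just ``(LS) plus homotopy invariance.'' For $\mathcal{W}\subseteq X\times\mathbb{P}^1$ the fibres of a resolution over $0$ and $\infty$ are in general non-smooth, non-reduced divisors, so $[\widetilde{\mathcal{W}}_0\to X]$ is not even defined as you use it; the actual mechanism is the first Chern class operator of $\mathcal{O}(1)$ applied to $[\widetilde{\mathcal{W}}\to X\times\mathbb{P}^1]$ together with the strict-normal-crossing divisor-class formula (where the formal group law enters and the higher terms land in $\widetilde{\mathbb{L}}\cdot\Omega_*$), which relates $c_1(\mathcal{O}(\mathcal{W}_0))$ to $\sum m_i[\tilde Z_i\to X]$ with the correct multiplicities. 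You flag this step, but the tool you name would not carry it.

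The genuine gap is the final claim that the $K$-theory case is ``word for word the same.'' It is not, and the analogous argument fails: $\mathrm{K}^0(X)\cong G_0(X)$ is indeed generated by the classes $[\mathcal{O}_Z]$, but its relations are generated by arbitrary short exact sequences of coherent sheaves, not by a cycle-level relation like rational equivalence, so there is no analogue of ``check $\phi$ on generating rational equivalences'' to establish well-definedness of $[\mathcal{O}_Z]\mapsto\beta^{\dim Z}[\tilde Z\to X]$. Moreover, unlike the Chow case where $\pi_*[\tilde Z]=[Z]$ exactly, the forward map sends the resolution class only to $[\mathcal{O}_Z]$ plus corrections supported in lower dimension, so the composites are not the identity on generators; pushing this through the dimension filtration would at best compute an associated graded, and integrally $\mathrm{gr}\,G_0(X)$ is only a quotient of $\bigoplus_d\mathrm{CH}_d(X)$, so the argument cannot close. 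Levine--Morel's proof of the $K$-theory statement is genuinely different (Conner--Floyd style): the inverse $\mathrm{K}^0[\beta,\beta^{-1}]\to\Omega^*\otimes_{\mathbb{L}}\mathbb{Z}[\beta,\beta^{-1}]$ is built from vector-bundle generators via Chern classes and the splitting principle, and the substantive check is its compatibility with projective push-forward, a Riemann--Roch type verification — not the generalized degree formula. So the second half of the theorem needs a separate argument that your proposal does not supply.
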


\subsubsection{}\label{TOCT}

We recall the technique of twisting an OBM 
\cite[\S 7.4.1]{lev&mor-book}.
Fix an OBM $A_*$.
Let $\tau = (\tau_i)_{i \geq 0}$ be a sequence where $\tau_i \in 
A^{-i} (k)$ for all $i$ and $\tau_0$ is a unit. 
The $\tau$-\emph{inverse Todd class operator} of a line bundle 
$L \to X$ is defined as 
$\widetilde{{\rm Td}}^{-1}_{\tau} (L) := \sum_{i \geq 0} \tau_i \, 
\tilde{\mathrm{c}}_1^i (L)$.  This definition is extended to any vector bundle via the splitting.

The \emph{twisted OBM} $A_*^{(\tau)}$ is the OBM given by the data:
set $A_*^{(\tau)} (X) := A_*(X)$ for any $X$ in {\bf Sch}$_k$,
set $f_*^{(\tau)} := f_*$ for any projective morphism $f$,
and for any l.c.i. morphism $f: Y \to X$, 
given $x \in  A_*^{(\tau)} (X)$, set
$f^*_{(\tau)} (x) := \widetilde{\mathrm{Td}}^{-1} (N_f) \cdot f^* (x)$,
where $N_{f}$ denotes the virtual normal bundle of $f$. 

\begin{example}\label{exa-tt}
Let $\mathbb{Z} [ \mathbf{t}] := \mathbb{Z} [ t_i \mid i \geq 1]$, 
where $t_i$ is a variable of degree $-i$. Set $t_0 = 1$ and
define ${\bf t} = (t_0, t_1, \ldots)$.
By means of the ``universal exponential'' it can be shown that 
that there is an isomorphism  
$\log :\mathbb{Q}[\mathbf{t}] \to \mathbb{L}_\mathbb{Q}$ of 
$\mathbb{Q}$-algebras. 
Levine and Morel used this to show 
\cite[Theorem 4.1.28, Theorem 4.5.1]{lev&mor-book}
that the canonical map  
$
\Theta_{\exp}:(\Omega_*)_\mathbb{Q} 
\to
{\rm CH}_*\otimes_{\mathbb{Z}} \mathbb{Q} [\mathbf{t}]^{({\bf t})}
$ 
is an isomorphism of Borel-Moore homology theories. 

\end{example}

\subsubsection{}\label{accv-propo2}

For later applications we need a cobordism version of the
Leray-Hirsch theorem for smooth projective fibrations with cellular
fiber, proved for Chow groups in \cite[Lemma 2.8]{ES:quot}. 

A morphism $X\to Y$ is said to be a 
\emph{locally isotrivial fibration} 
with fiber $F$ provided that every $y\in Y$ has an open neighborhood $U$
admitting a finite \'etale morphism $f: V\to U$ such that 
$f^{-1}X =  V\times F$, and the map $f^{-1} X \to V$ agrees with the
projection $V\times F \to V$.  

A smooth variety $F$ is said to be
\emph{cellular} provided it has a filtration 
\begin{equation*}\label{equ-cel-fil}
  \varnothing = F_{0}
\subseteq
  F_1 
\subseteq  \ldots  \subseteq 
  F_{N-1} 
\subseteq 
  F_N = F,
\end{equation*}
where $\alpha_{i}:F_{i} \hookrightarrow F_{i+1}$ is a closed embedding and
$F_{i}- F_{i-1} \iso \A^{n_i}$ for all $i$.  
Since a cellular variety is built from affine spaces its cobordism is
easy to describe as an $\Omega^{*}(k)$-module.  

\begin{proposition}(\cite[Theorem 2.5]{jens&val-schubert})\label{invbase}
Let $F$ be a smooth, projective cellular variety with filtration
$\{ F_i \}$ as above. 
Then  $\Omega_{*}(F)$ is a free module over $\Omega_{*}(k)$. 
A basis is given by $[\widetilde{F_{i}}\to F_{i}\subseteq F_{m}]$
where $\widetilde{F_{i}}\to F_{i}$ is a resolution of singularities
for each $i\leq m$. 
If $L/k$ be a field extension, then the the map induced by base change
$\Omega_{*}(F) \to \Omega_{*} (F_{L})$ is an 
isomorphism.    
\end{proposition}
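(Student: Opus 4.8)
The plan is to build the basis classes explicitly and then check freeness by reducing to the known Chow-theoretic statement via the rationalization isomorphism of Example \ref{exa-tt}, together with an integral argument using the localization sequence (LS).

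First I would set up the inductive structure coming from the filtration $\varnothing = F_0 \subseteq F_1 \subseteq \cdots \subseteq F_N = F$. For each $i$, write $U_i = F_i - F_{i-1} \cong \A^{n_i}$, which is open in $F_i$ with closed complement $F_{i-1}$. The localization sequence gives an exact sequence
\[
\Omega_*(F_{i-1}) \xrightarrow{(\alpha_{i-1})_*} \Omega_*(F_i) \xrightarrow{j^*} \Omega_*(U_i) \to 0,
\]
where by homotopy invariance (EH) applied to $\A^{n_i} \to \mathrm{Spec}(k)$ we have $\Omega_*(U_i) \cong \Omega_{*-n_i}(k)$, a free $\Omega_*(k)$-module on a single generator. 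So the plan is to show by induction on $i$ that $\Omega_*(F_i)$ is free over $\Omega_*(k)$ with basis $[\widetilde{F_j} \to F_j \hookrightarrow F_i]$ for $j \le i$, where $\widetilde{F_j} \to F_j$ is a chosen resolution of singularities (which exists since $\mathrm{char}\,k = 0$). The key claim at each step is that the localization sequence splits: one must produce a class in $\Omega_*(F_i)$ mapping under $j^*$ to the generator of $\Omega_*(U_i)$, and the natural candidate is $[\widetilde{F_i} \to F_i \hookrightarrow F_i]$, since $\widetilde{F_i} \to F_i$ restricts over the open dense cell $U_i \cong \A^{n_i}$ to an isomorphism (the resolution is an isomorphism over the smooth locus, which contains the open cell), hence restricts to the fundamental class of $\A^{n_i}$, which is the generator of $\Omega_*(U_i)$.

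Granting that $j^*$ sends the top class to the generator, surjectivity of $j^*$ together with the inductive hypothesis that $\Omega_*(F_{i-1})$ is free on $[\widetilde{F_j}\to F_j]_{j\le i-1}$ shows that the displayed classes generate $\Omega_*(F_i)$. For freeness (i.e. that there are no relations), here is where I would pass to rational coefficients: by Example \ref{exa-tt}, $(\Omega_*)_\Q \cong \mathrm{CH}_* \otimes_\Z \Q[\mathbf{t}]^{(\mathbf{t})}$, and the Chow groups of a smooth projective cellular variety are free of the expected rank (this is \cite[Lemma 2.8]{ES:quot} or classical), so $(\Omega_*(F))_\Q$ is free of the right rank over $(\Omega_*(k))_\Q = \mathbb{L}_\Q$. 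Combined with the surjection just established, a rank count forces the integral classes to be a basis — more precisely, the integral generation statement plus freeness after rationalization, together with $\Omega_*(k)$ being a polynomial ring (hence the relevant flatness), yields that the module is free integrally. Alternatively, and perhaps more cleanly, one argues purely integrally by induction using that each localization sequence splits: once $j^*$ admits a section sending the generator of the free module $\Omega_*(U_i)$ to the top class, the sequence splits as $\Omega_*(F_i) \cong \Omega_*(F_{i-1}) \oplus \Omega_*(U_i)$ as $\Omega_*(k)$-modules, and freeness is preserved under extension by a free module.

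The last assertion, that base change along a field extension $L/k$ induces an isomorphism $\Omega_*(F) \to \Omega_*(F_L)$, follows by running the same induction in parallel over $k$ and over $L$: base change is compatible with the localization sequence, with pushforward along closed immersions, and with homotopy invariance, so it carries the basis $[\widetilde{F_j}\to F_j]$ over $k$ to the corresponding basis $[\widetilde{(F_j)_L} \to (F_j)_L]$ over $L$ (the base change of a resolution of singularities in characteristic zero is again a resolution), hence is an isomorphism on the free modules level by level. The main obstacle I anticipate is the verification that $j^*([\widetilde{F_i}\to F_i])$ is exactly the generator of $\Omega_*(U_i)\cong\Omega_{*-n_i}(k)$ and not merely a unit multiple or a more complicated class — this rests on the fact that resolution of singularities is an isomorphism over the smooth open cell together with a careful identification of the fundamental class of $\A^{n_i}$ under the homotopy invariance isomorphism, and on checking that the choice of resolution does not affect the resulting cobordism class (two resolutions agree over the dense open cell, so their difference is supported on the lower-dimensional $F_{i-1}$, which is controlled by the inductive hypothesis).
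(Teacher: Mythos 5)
You cannot really be measured against an in-paper argument here: the paper gives no proof of Proposition \ref{invbase} (it is quoted from the reference cited there), so your proposal stands or falls on its own. Its skeleton is the standard one and most of it is sound: the localization sequence $\Omega_*(F_{i-1})\to\Omega_*(F_i)\to\Omega_*(\A^{n_i})\to 0$, the identification $\Omega_*(\A^{n_i})\cong\Omega_{*-n_i}(k)\cdot 1$, and the fact that $[\widetilde{F_i}\to F_i]$ restricts to $1_{\A^{n_i}}$ (the open cell consists of regular points of $F_i$, so in characteristic zero the resolution can be chosen to be an isomorphism over it; alternatively restrict to the generic point of the cell and use $\Omega_{n_i}(\A^{n_i})\cong\Z$ to rule out anything other than the fundamental class) do give generation by the stated classes. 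The base-change statement also follows by running the construction in parallel, provided you add that $\Omega_*(k)\to\Omega_*(L)$ is itself an isomorphism (both coefficient rings are $\mathbb{L}$ by Levine--Morel), so that ``basis maps to basis'' really produces an isomorphism of modules.

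The gap is in the freeness step. Your ``cleaner'' alternative is not a proof: the localization sequence is only right exact, so a section of $j^*$ yields $\Omega_*(F_i)\cong\Omega_*(\A^{n_i})\oplus\mathrm{im}\,(\alpha_{i-1})_*$, where $\mathrm{im}\,(\alpha_{i-1})_*$ is a priori only a quotient of $\Omega_*(F_{i-1})$; injectivity of the pushforward from the closed stratum is exactly what freeness amounts to, so that induction does not close. Your rational argument can be completed, but ``a rank count forces'' is not yet an argument: integral generation by $n$ elements together with rational freeness of rank $n$ gives integral freeness only via two facts you must make explicit. Rationalizing the surjection $\Omega_*(k)^{\oplus n}\twoheadrightarrow\Omega_*(F)$ gives a surjection of $\mathbb{L}_{\Q}$-modules onto a module isomorphic to $\mathbb{L}_{\Q}^{\oplus n}$ (here you use Example \ref{exa-tt} and the classical fact that $\mathrm{CH}_*(F)$ is free of rank equal to the number of cells); such a surjection is injective because a surjective endomorphism of a finitely generated module over a commutative ring is injective. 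Hence the kernel of the integral surjection is torsion as an abelian group, and it vanishes because it embeds in $\mathbb{L}^{\oplus n}$, which is $\Z$-torsion-free. With these insertions your proof is correct; note finally that the class $[\widetilde{F_i}\to F_i]$ does in general depend on the chosen resolution --- only the basis property is independent of the choice, and your observation that two choices differ by something pushed forward from $F_{i-1}$ is the correct way to phrase that.
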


\begin{proposition}\label{lhrat}
Suppose that $F$ is a smooth, projective cellular variety and $p:X \to
Y$ is a projective, locally isotrivial fibration between smooth
varieties with fiber $F$. Let $\iota_{y}:F_{y}\to X$ be the
inclusion of a fiber over a  point $y\in Y$.  
\begin{enumerate}
\item The morphism
  $\iota_{y}^{*}:\Omega^{*}(X)_{\Q}\to\Omega^{*}(F_{y})_{\Q}$ is
  surjective for any point $y\in Y$. Moreover, under the
  isomorphism $\Omega^{*}(F_{y}) {\cong} \Omega^{*}(F)$  induced by base change along $k\to k(y)$, we have  that $\iota_{y}^{*}=\iota_{y'}^{*}$ for any two points
  $y,y'$ in the same connected component. 
  
\item Let $e_{i}\in \Omega^{*}(X)_{\Q}$ be elements such that
  $\eta_{i} = \iota_{y}^{*}e_{i}$ form a basis for
  $\Omega^{*}(F_{y})_{\Q}$ as an $\Omega^* (k)_{\Q}$-module.  
  Define the morphism
\begin{equation*}
\Psi:  \Omega^{*}(Y)_{\Q} \otimes_{\Omega^{*}(k)_{\Q}} \Omega^{*}(F)_{\Q} 
\to
  \Omega^{*}(X)_{\Q},
\end{equation*}
by 
$\Psi(\sum\alpha_{i} \otimes \eta_{i} ) = \sum p^*(\alpha_{i}) \cup e_{i}$. 
Then $\Psi$ is an isomorphism of $\Omega^{*}(k)_{\Q}$-modules. 

\item If $p:X\to Y$ is Zariski-locally trivial then statements (1) and
  (2) hold integrally. 
\end{enumerate}
\end{proposition}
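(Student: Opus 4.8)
The plan is to deduce the rational assertions (1) and (2) from their analogues for Chow groups --- the Chow version of the present statement, \cite[Lemma 2.8]{ES:quot} --- by transporting along the isomorphism $\Theta_{\exp}$ of Example~\ref{exa-tt}, and to obtain (3) by rerunning the proof of the Zariski-locally trivial case of \cite[Lemma 2.8]{ES:quot} with $\Omega_{*}$ in place of $\mathrm{CH}_{*}$. First some bookkeeping: by additivity we may assume $Y$ connected; the morphism $p$ is l.c.i., being projective between smooth varieties, and, since by local isotriviality the fiber $F_{y}=p^{-1}(y)$ is \'etale-locally over $y$ a copy of $F$ and hence smooth over $k(y)$, the inclusion $\iota_{y}\colon F_{y}\hookrightarrow X$ is l.c.i. too, so every pull-back in the statement is defined. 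We use that the structure map induces an isomorphism $\Omega^{*}(k)\xrightarrow{\ \sim\ }\Omega^{*}(L)$ for any field extension $L/k$ (Proposition~\ref{invbase} with $F=\operatorname{Spec} k$), so that the phrase ``$\eta_{i}$ is a basis of $\Omega^{*}(F_{y})_{\Q}$ over $\Omega^{*}(k)_{\Q}$'' makes sense via the base-change isomorphism $\Omega^{*}(F_{y})\cong\Omega^{*}(F)$ of Proposition~\ref{invbase}; and $\Psi$ is well defined because $\{\eta_{i}\}$ being an $\Omega^{*}(k)_{\Q}$-basis of $\Omega^{*}(F)_{\Q}$ determines a unique $\Omega^{*}(k)_{\Q}$-linear map $\eta_{i}\mapsto e_{i}$, of which $\Psi$ is the base change along $p^{*}$ followed by cup product.

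For (1) and (2): by Levine--Morel's equivalence between oriented Borel--Moore homology theories and oriented cohomology theories on $\mathbf{Sm}_{k}$, the isomorphism of Borel--Moore homology theories $\Theta_{\exp}$ restricts to a ring isomorphism $\Omega^{*}(-)_{\Q}\cong\bigl(\mathrm{CH}^{*}\otimes_{\Z}\Q[\mathbf{t}]\bigr)(-)$ carrying each l.c.i. pull-back to the corresponding one twisted by the inverse Todd class $\widetilde{\mathrm{Td}}^{-1}_{\mathbf{t}}$ of the virtual normal bundle; since its degree-zero part is $t_{0}=1$, this twisting is multiplication by a global unit. Hence $\iota_{y}^{*}$ on $\Omega^{*}(-)_{\Q}$ is, up to a global unit, the Chow pull-back $\mathrm{CH}^{*}(X)\otimes\Q[\mathbf{t}]\to\mathrm{CH}^{*}(F_{y})\otimes\Q[\mathbf{t}]$, whose surjectivity and fiber-independence follow from \cite[Lemma 2.8]{ES:quot} since $\Q[\mathbf{t}]$ is $\Q$-free: this gives (1). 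Likewise $\Theta_{\exp}$ identifies $\Omega^{*}(k)_{\Q}\cong\Q[\mathbf{t}]$, $\Omega^{*}(Y)_{\Q}\cong\mathrm{CH}^{*}(Y)_{\Q}\otimes\Q[\mathbf{t}]$ and $\Omega^{*}(F)_{\Q}\cong\mathrm{CH}^{*}(F)_{\Q}\otimes\Q[\mathbf{t}]$, so the source of $\Psi$ becomes $\mathrm{CH}^{*}(Y)_{\Q}\otimes_{\Q}\mathrm{CH}^{*}(F)_{\Q}\otimes_{\Q}\Q[\mathbf{t}]$ and $\Psi$ becomes the Chow map $\Psi_{\mathrm{CH}}\otimes\mathrm{id}$ up to a global unit on $X$; as $\Psi_{\mathrm{CH}}$ is an isomorphism by \cite[Lemma 2.8]{ES:quot} and $\Q[\mathbf{t}]$ is flat, (2) follows.

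For (3): the proof of the Zariski-locally trivial case of \cite[Lemma 2.8]{ES:quot} uses only the localization sequences, homotopy invariance along vector bundles, and the affine-space structure of the cells --- none of which requires smoothness of the base --- and all of these are available for the oriented Borel--Moore homology theory $\Omega_{*}$ via axioms (EH) and (LS). For a trivial bundle $U\times F\to U$ the proof of Proposition~\ref{invbase} applies verbatim with $\operatorname{Spec} k$ replaced by $U$ --- the filtration $\{U\times F_{i}\}$ has cells $U\times\A^{n_{i}}$, and (LS) together with (EH) furnish the splittings --- yielding (1) and (2) integrally for trivial bundles; a Noetherian induction on $Y$ by means of (LS) then globalizes this to an arbitrary Zariski-locally trivial $p$. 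The fiber-independence of $\iota_{y}^{*}$ holds integrally because it holds rationally by (1) above while $\Omega^{*}(F)$ is a free $\Omega^{*}(k)$-module and $\Omega^{*}(k)\cong\mathbb{L}$ is torsion-free; so one obtains global classes $e_{i}\in\Omega^{*}(X)$ with $\iota_{y}^{*}e_{i}=\eta_{i}$ for every $y$, and $\Psi$ is an isomorphism.

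The delicate step is (3): the whole devissage must be carried out integrally, without the help of $\Theta_{\exp}$, which forces one to work in Borel--Moore homology throughout --- the closed strata appearing in the induction on $Y$ are in general singular, so only $\Omega_{*}$, not $\Omega^{*}$, is defined on them --- and one must check that the splittings in the relativized Proposition~\ref{invbase} and the gluing over a trivializing cover respect the fiberwise basis, which uses that the transition automorphisms of $p$, although they need not preserve the cellular filtration of $F$, do fix the induced flag of $\Omega^{*}(k)$-submodules of $\Omega^{*}(F)$ after base change, together again with torsion-freeness over $\mathbb{L}$. A minor point in (1)--(2) is to confirm that the Todd twists produced by $\Theta_{\exp}$ are units, which is immediate from $t_{0}=1$.
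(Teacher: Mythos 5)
Your rational argument for (1)--(2) takes a genuinely different route from the paper's: the paper proves everything directly in cobordism (for (1), push forward $1_V\times\beta_i$ along the finite \'etale trivializing cover and divide by $\deg(f)$, then compare with the generic fiber; for (2), surjectivity by noetherian induction over the base using (LS) and the cellular filtration of $F$, and injectivity via a decomposition of the diagonal class $\Delta_{*}(1_{X})=\sum\pi_{1}^{*}(e_{i})\cup\pi_{2}^{*}(\alpha_{i})$). Transporting along $\Theta_{\exp}$ of Example \ref{exa-tt} can probably be made to work, but as written there is a gap in (2): after applying $\Theta_{\exp}$ the lifts $e_{i}$ become classes in $\mathrm{CH}^{*}(X)\otimes\Q[\mathbf{t}]$ whose (Todd-twisted) restrictions form a $\Q[\mathbf{t}]$-basis of $\mathrm{CH}^{*}(F_{y})\otimes\Q[\mathbf{t}]$ that need not be of the form (basis of $\mathrm{CH}^{*}(F_{y})_{\Q}$) tensored with $1$, and the cup product is also twisted (the pull-back along the diagonal acquires a factor $\widetilde{\mathrm{Td}}^{-1}_{\mathbf{t}}(T_{X})$). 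So ``$\Psi$ becomes $\Psi_{\mathrm{CH}}\otimes\mathrm{id}$ up to a global unit'' is not literally true; what you need is a Leray--Hirsch statement over $\Q[\mathbf{t}]$ valid for arbitrary lifts of an arbitrary $\Q[\mathbf{t}]$-basis, which requires an additional (doable, e.g.\ filtration by $\mathbf{t}$-degree plus a graded Nakayama-type) argument that you do not supply. You also silently assume that \cite{ES:quot} contains the fiber-independence statement for arbitrary, possibly non-closed, points $y$.

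The genuine gap is (3). Your plan is to ``rerun the Chow proof with $\Omega_{*}$ in place of $\mathrm{CH}_{*}$,'' but that rerunning is exactly the content that must be proved -- it is what the paper's proof consists of -- and your sketch does not carry it out: you construct \emph{some} lifts $e_{i}$ rather than proving the statement for the arbitrary given $e_{i}$; you say nothing about how injectivity of $\Psi$ is obtained integrally (the paper gets it from the diagonal decomposition, which works integrally precisely because the surjectivity input is integral once the trivializing cover is Zariski); and the condition you flag about transition automorphisms ``fixing the induced flag of $\Omega^{*}(k)$-submodules'' is neither justified nor actually needed in any argument you give. By contrast, the paper organizes its single cobordism proof so that rational coefficients enter only through inverting $\deg(f)$ and $\deg(g')$ of the finite \'etale covers, so (3) is immediate: when $p$ is Zariski-locally trivial one takes $V=U$ and those degrees are $1$. (Your observation that integral fiber-independence follows from the rational statement because $\Omega^{*}(F)$ is free over $\mathbb{L}$, hence torsion-free, is fine.) Since your route makes (1)--(2) depend on the Chow lemma, it cannot feed into (3), and (3) as proposed is an appeal to an unexhibited proof rather than a proof.
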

\begin{proof}
\begin{enumerate}
 \item We may assume $Y$ is connected. Let $y\in Y$ be any point. We show that $\iota_{y}^{*}$ is surjective. 
  Let $j:U \to Y$ be an open neighborhood of $y$ which admits a finite \'etale morphism
  $f:V\to U$  over which the fibration trivializes. 
  Write $f_{V}:V\times F \to U\times_{Y} X$ for the induced morphism.
  Let $\beta_{i}\in \Omega^{*}(F)_{\Q}$ be a basis (as an 
  $\Omega^{*}(k)_{\Q}$-module)  and consider
  $(f_{V})_{*}(1_{V}\times\beta_{i}) \in \Omega^{*}(U\times_{Y}X)_{\Q}$. 
  Now  $j_{U}: U \times_{Y} X \to X$ is open and take
  $e_{i}\in \Omega^{*}(X)_{\Q}$ such that 
  $j_{U}^{*} (e_{i}) = (f_{V})_{*}(1_{V}\times\beta_{i})$.
 Then  we have that
  $\iota_{y}^{*} (e_{i}) = \deg(f)\beta_{i}\in\Omega^{*}(F_{y})_{\Q}$ and
  thus forms a basis.

  Now we show that $\iota^{*}_{y}=\iota^{*}_{\eta}$ where $\eta$ is the generic point of $Y$, 
  and so of $U$ as well. 
  Let $\eta'$ be the generic point of $V$  and $y'\in V$ a point such that $f(y')=y$. 
  The statement follows from consideration of the following commutative diagram, 
  where the bottom vertical arrow is induced by $V\times_{U}X\iso V\times F \to F$
\begin{equation*}
  \xymatrix@-0.7pc{
            \Omega^{*}(F_{y}) \ar[d]_{\cong} & \Omega^{*}(X) \ar[r]\ar[l]\ar[d] & 
            \Omega^{*}(F_{\eta})\ar[d]^{\cong} \\
            \Omega^{*}(F_{y'}) & \Omega^{*}(V\times_{U}X) \ar[r]\ar[l] & \Omega^{*}(F_{\eta'}) .\\
             & \Omega^{*}(F)\ar[ul]^{\cong}\ar[ur]_{\cong} \ar[u]& 
           }
\end{equation*}
\item We begin by showing that $\Psi$ is surjective. 
  Let $f:W \to Y$ be a morphism of $k$-schemes and write 
  $X_{W} = W \times_{Y} X$. 
  Let  $p_{W}:X_{W}\to W$ and $f_{W}:X_{W}\to X$ be the morphism
  obtained by base-change. 
  The morphisms $p_{W}$ and $f_{W}$ induce 
  $\langle p_{W},f_{W}\rangle:X_{W}\to W\times X$. 
  This is an l.c.i.-morphism since both $X_{W}$ and $W\times X$ are
  smooth over $W$.  
  Define the morphism of $\Omega^{*}(k)$-modules  
\[
  \Psi_{f}:\Omega_{*}(W) \otimes_{\Omega^{*}(k)}\Omega^{*}(F) \to
  \Omega_{*}(X_{W}),
\quad
  \sum w_{i} \otimes \eta_{i} \mapsto \sum \langle p_{W},f_{W}
  \rangle^{*} (w_{i}\times e_{i}) .
\] 
  If $f$ is smooth then $\Psi_{f} \left( \sum w_{i}
  \otimes \eta_{i} \right)= \sum f_{W}^{*}(w_{i}) \cup  e_{i}$,
  in particular $\Psi_{id_{Y}}$ is the map of the proposition. 
  We proceed by induction on the dimension of $W$ to show that
  $\Psi_{f}$ is surjective. 
  The zero-dimensional case follows directly from the definition of
  $\Psi_{f}$. 
  Suppose that $\Psi_{f'}$ is surjective where $f':W'\to Y$ with 
  $\dim W' < \dim W$. 
  Let $j:U \to W$ be an open over which the fibration 
  $X_{W} \to W$ becomes isotrivial and let $i:Z \to W$ be the closed
  complement.  
  Consider the comparison of exact sequences (where the tensor product
  is over $\Omega^{*}(k)_{\Q}$)
\begin{equation*}
\xymatrix{
          \Omega_{*}(Z)_{\Q} \otimes_{} \Omega^{*}(F)_{\Q} 
            \ar[r]^{i_{*}\otimes\,\mathrm{id}}\ar[d]^{\Psi_{f_{|Z}}} &
          \Omega_{*}(W)_{\Q} \otimes_{} \Omega^{*}(F)_{\Q} 
            \ar[d]^{\Psi_{f}}\ar[r]^{j^{*}\otimes\,\mathrm{id}} & 
          \Omega_{*}(U)_{\Q}\otimes_{}\Omega^{*}(F)_{\Q}
            \ar[r]\ar[d]^{\Psi_{U}} & 0\\
          \Omega_{*}(X_{Z})_{\Q} \ar[r]^{i'_{*}} & \Omega_{*}(X_{W})_{\Q}
            \ar[r]^{(j')^{*}} & 
          \Omega_{*}(X_{U})_{\Q} 
            \ar[r] & 
           0.
}
\end{equation*}
  This diagram commutes and the left-hand vertical map is surjective by
  induction. 
  It suffices to conclude that the right-hand vertical map is
  surjective. 
  Let $g:V\to U$ be a finite, \'etale morphism over which $X_{U} \to U$
  becomes trivial. We have the commutative square  
\begin{equation*}
\xymatrix{
          \Omega_{*}(V)_{\Q} \otimes_{} \Omega^{*}(F)_{\Q} 
            \ar[r]^{g_{*}\otimes\,\mathrm{id}}\ar[d]^{\Psi_{V}} &
          \Omega_{*}(U)_{\Q} \otimes_{} \Omega^{*}(F)_{\Q} 
            \ar[d]^{\Psi_{U}}\\
          \Omega_{*}(V\times F)_{\Q} 
            \ar[r]^{g'_{*}} & 
          \Omega_{*}(X_{U})_{\Q}.
}
\end{equation*}
  The morphism $g'_{*}$ is surjective since $g'_{*}(g')^{*}$ is
  multiplication by $\deg(g')$. 
  It only remains to see that $\Psi_{V}$ is surjective as
  well in order to conclude that $\Psi$ is surjective. 
  First note that $\Psi_{V}$ is the morphism
  $\Omega_{*}(V)\otimes_{\Omega^{*}(k)}\Omega_{*}(F) \to
  \Omega_{*}(V\times F)$ 
  induced by external product $\alpha \otimes \beta \mapsto
  \alpha\times \beta$.  
  Let $\{F_{i}\}$ be a filtration of $F$ as above and consider the
  commutative diagram with exact rows 
 \begin{equation*}
   \xymatrix{
             \Omega_{*}(V)\otimes \Omega_*(F_{k})
                \ar[r] \ar[d] &
 	    \Omega_{*}(V)\otimes \Omega_*(F_{k+1})
                \ar[r] \ar[d]&
 	    \Omega_{*}(V)\otimes \Omega_*(F_{k+1} -
                F_{k}) \ar[r] \ar[d] &
 	    0 \\ 
 	    \Omega_*(V\times F_{k}) \ar[r] & 
 	    \Omega_*(V\times F_{k+1}) \ar[r] &
 	    \Omega_*(V\times (F_{k+1} - F_{k})) \ar[r] & 
 	    0 . 
            }
\end{equation*}
  The right hand map is always a surjection and the left hand map is a
  surjection by induction and therefore the middle map is also a
  surjection.
 
  To show injectivity we first observe that the surjectivity result in
  the previous paragraph implies a decomposition of the class of the
  diagonal $\Delta_{*}(1_{X})\in \Omega^{*}(X\times_{Y}X)_{\Q}$. 
  Write $\pi_{k}:X\times_{Y}X\to X$ for the projection to the $k$-th
  factor. 
  The map $\pi_{2}:X\times_{Y} X\to X$ is a locally isotrivial fibration
  with fiber $F$ and the elements $\pi_{1}^{*} (e_{i})$ restrict to a basis
  of the fiber. 
  Therefore $\Omega^{*}(X\times_{Y}X)_{\Q}$ is generated by the elements
  $\pi_{1}^{*} (e_{i})$ as an $\Omega^{*}(X)_{\Q}$-module, where 
  $\Omega^{*}(X\times_{Y}X)_{\Q}$ is viewed as an
  $\Omega^{*}(X)_{\Q}$-module via $\pi_{2}^{*}$. 

  This means that there are elements $\alpha_{i}\in\Omega^{*}(X)_{\Q}$
  so that 
  $\Delta_{*}(1_{X}) = \sum \pi_{1}^{*} (e_{i}) \cup \pi_{2}^{*} (\alpha_{i})$
  where $\pi_{i}: X \times_{Y} X \to X$ are the projections. 
  Define a morphism of $\Omega^{*}(k)_{\Q}$-modules
  $\rho:\Omega^{*}(X)_{\Q}\to
  \Omega^{*}(Y)_{\Q}\otimes_{\Omega^{*}(k)_{\Q}}\Omega^{*}(F)_{\Q}$ by
  $\rho(x) = \sum p_{*}(x\cup \alpha_{i})\otimes \eta_{i}$. 
  Then injectivity of $\Psi$ follows from the equalities
\begin{align*}
  \Psi(\rho(x)) &= 
  \sum p^{*}p_{*}(x\cup \alpha_{i})\cup e_{i} 
= 
  \sum (\pi_{1})_{*} \pi_{2}^{*} (x\cup \alpha_{i})\cup e_{i}
  \\
&= (\pi_{1})_{*} \left( \sum\pi_{2}^{*} (x) \cup \pi_{2}^{*}
  (\alpha_{i}) \cup \pi_{1}^{*} (e_{i}) \right) = 
  (\pi_{1})_{*} \left( \pi_{2}^{*} (x) \cup \Delta_{*}(1_{X}) \right) \\
&=
  (\pi_{1})_{*}\Delta_{*} \left(\Delta^{*}\pi_{2}^{*} (x) \right) =  (\mathrm{id}_{X})_{*}(\mathrm{id}_{X})^{*} (x) 
 = x. 
\end{align*}

\item If $p:X\to Y$ is Zariski-locally trivial then $V=U$ in the proof
  of (1), so the argument given works integrally. Similarly for (2).
\end{enumerate}
\end{proof}

\subsection{Algebraic Groups and Algebraic Quotients}\label{AG&AQ}

\subsubsection{}

Given a linear algebraic group $G$ over $k$ and a
$G$-scheme $X$ with action $\sigma$, 
we have the action map
$\Psi := (\sigma,{\rm pr_X}): G \times X \to X \times X$.
We say that the action $\sigma: G \times X \to X$ is 
\emph{proper} if $\Psi$ is proper and
\emph{free} if $\Psi$ is a closed embedding.

Let $X$ be a scheme with $G$-action $\sigma$. 
Say that a morphism $\pi: X \to Q$ of $k$-schemes is a 
\emph{geometric quotient} of $X$ by $G$
if: $\pi \circ \sigma = \pi \circ {\rm pr}_X$,
$\pi$ is surjective, the image of $\Psi$ is $X \times_Q X$,
$U \subset Q$ is open if and only if $\pi^{-1} (U)$ is open,
and the structure sheaf $\mathcal{O}_Y$  is the subsheaf of
$\pi_* \left( \mathcal{O}_X \right)$ consisting of invariant
functions. 
We write $X\to X/G$ for the geometric quotient when it exists. 

If the geometric quotient $X/G$ exists, $X$ is called a 
\emph{principal $G$-bundle} over $X/G$ if $\pi: X \to X/G$ is
faithfully flat and $\Psi : G \times X \to X \times_{X/G} X$ is an
isomorphism.
By \cite[Lemme XIV 1.4]{ray-fais} this is equivalent to the condition
that $\pi$ is a locally isotrivial fibration with fiber $G$.
If $G$ acts freely on $X$ and the geometric quotient $X/G$ exists then
by \cite[Proposition 0.9]{mfk-git} it is a principle $G$-bundle.   

\subsubsection{}\label{ffd-section}

We frequently use faithfully flat descent for certain properties of
morphisms. We briefly summarize the main results we use, see
\cite[$\S$2]{EGA-IV-2} 
for details.  
Let {\bf P} be a property of morphisms of schemes that is stable under flat base change. 
Say that ${\bf P}$ satisfies
\emph{faithfully flat descent} if,
given $f : X \to Y$  and a faithfully flat morphism $Y' \to Y$,
such that $f': X \times_{Y} Y^{\prime} \to X$
satisfies {\bf P}, then $f$ satisfies {\bf P}.
Among the properties which satisfy descent are: separated, finite type,
proper, open immersion, closed immersion, finite, reduced, normal,
quasi-compact, regular, flat, \'etale and smooth. 

\subsubsection{}\label{sect-gmod}

Let $X$ be a $G$-scheme with action $\sigma$. 
A quasi-coherent
$\mathcal{O}_X$-module $\mathcal{F}$ is called 
a \emph{$G$-module} 
(see \cite[$\S$1.2]{tho-ekt}) if there is an isomorphism
$\phi: \sigma^* \mathcal{F} \to {\rm pr}_X^* \mathcal{F}$
of quasi-coherent $\mathcal{O}_{G \times X}$-modules such that the
co-cycle condition 
$\;{\rm p}^*_{23} (\phi) \circ \left( {\rm Id}_G \times \sigma
\right)^* (\phi) = \left( \mu \times {\rm Id}_X \right)^* (\phi) \;$
holds on $G \times G \times X$, where $p_{23} : G \times G \times X
\to G \times X$ is the projection onto the second and third factors, 
and $\mu : G \times G \to G$ is the multiplication on $G$. 
A morphism $f : M \to N$ between $G$-modules is called a 
\emph{$G$-morphism} if 
$\phi_N \circ \sigma^* (f) = {\rm pr}^*_X (f) \circ \phi_M$.

Say that $\mathcal{L}$ is \emph{$G$-linearizable} if it can be given a
$G$-module structure. 
A choice of such a structure is a \emph{$G$-linearization} of
$\mathcal{L}$.  
The set of $G$-linearized line bundles over $X$ is a group under
tensor product and is written ${\rm Pic}^G (X)$. 
An equivariant morphism $f : X \to Y$ induces the morphism
$f^* : {\rm Pic}^G(Y) \longrightarrow {\rm Pic}^G (X)$.
If $\pi : X \to Y$ is a principal $G$-bundle, then
$\pi^* : {\rm Pic} (Y) \to {\rm Pic}^G(X)$ is an isomorphism 
(see \cite[Ch. 1, $\S$ 3]{mfk-git} for details).

\subsubsection{}\label{gcat}

Projective and quasi-projective morphisms are not
stable under descent. 
To resolve the resulting difficulties in our situation we consider the
following category of $G$-schemes. 
Let $G-\mathbf{Sch}_k$  be the category whose objects are schemes $X$
with a $G$-action and which possess a $G$-linearizable ample line
bundle. 
Morphisms are equivariant maps. Similarly $G-\mathbf{Sm}_k$
consists of smooth $G$-schemes which possess a $G$-linearizable ample
line bundle.  

\begin{remark}
If $G$ is connected then every normal, quasi-projective $G$-scheme is
in $G-\mathbf{Sch}_k$ by \cite[Theorem 1.6]{sum-ec2}.
\end{remark}

A map $f:Y\to X$ in $G-\mathbf{Sch}_k$ is said to be an 
\emph{equivariant l.c.i.-morphism} provided that we can write 
$f= g\circ i$ where both $i$ and $g$ are in $G-\mathbf{Sch}_k$, 
$i:Y \to W$ is a regular closed embedding,  and  $g:W\to X$ is smooth
map. 

\begin{lemma}\label{des-pro}
\begin{enumerate}
\item Let $A$ be in $G-\mathbf{Sch}_k$ such that a principal $G$-bundle
  $A \to A/G$ exists with $A/G$ in $\mathbf{Sch}_k$. 
  Then a principal
  $G$-bundle $X \times A \to (X\times A)/G$ exists with
  $(X\times A)/G$ in 
  $\mathbf{Sch}_k$ for any $X$ in $G-\mathbf{Sch}_k$. 

\item 
  Suppose that principal $G$-bundles $X \to X/G$ and $Y \to Y/G$
  exist with $X/G$ and $Y/G$ in $\mathbf{Sch}_k$.
  Let $f: Y \to X$ be an equivariant projective (resp. an equivariant
  l.c.i.-morphism). 
  Then the induced morphism $\phi: Y/G \to X/G$  is projective
  (resp. $\phi$ is an l.c.i.-morphism).  
\end{enumerate}
\end{lemma}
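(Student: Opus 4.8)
The plan is to reduce both statements to properties that satisfy faithfully flat descent (\S\ref{ffd-section}), using the principal bundle $A \to A/G$ (resp. $X \to X/G$) as the faithfully flat cover. For part (1), first I would form the quotient $(X \times A)/G$ as follows: since $A \to A/G$ is a principal $G$-bundle and $X$ carries a $G$-linearizable ample line bundle, the diagonal $G$-action on $X \times A$ is free (freeness of the $A$-factor alone already forces the graph map to be a closed embedding), so by \cite[Proposition 0.9]{mfk-git} the geometric quotient $(X\times A)/G$ exists as a scheme and $X \times A \to (X\times A)/G$ is a principal $G$-bundle, provided the quotient is itself a scheme. To see the latter, note that projection $X \times A \to A$ is equivariant, hence descends to $(X\times A)/G \to A/G$; étale-locally on $A/G$ the bundle $A \to A/G$ trivializes, and over such a trivializing étale $V \to A/G$ one has $V \times_{A/G}(X\times A)/G \cong V \times X$, which lies in $\mathbf{Sch}_k$. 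Then I would invoke faithfully flat descent for the property "separated, quasi-projective of finite type" — more precisely, $(X\times A)/G$ is separated and of finite type by descent along $V \to A/G$, and one checks quasi-projectivity using the descended ample line bundle: the $G$-linearization of an ample bundle on $X$ pulled back to $X \times A$ descends (by \S\ref{sect-gmod}, since $X \times A \to (X\times A)/G$ is a principal bundle, $\mathrm{Pic}((X\times A)/G) \xrightarrow{\sim} \mathrm{Pic}^G(X\times A)$) to a line bundle on $(X\times A)/G$ whose pullback to each $V\times X$ is ample, hence it is ample by descent.

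For part (2), given the equivariant map $f: Y \to X$ I would consider the descended map $\phi: Y/G \to X/G$ and the Cartesian-type diagram with faithfully flat vertical maps $Y \to Y/G$ and $X \to X/G$; the key observation is that $Y = X \times_{X/G} (Y/G)$ as a consequence of $X \to X/G$ being a principal bundle and $f$ being equivariant, so $f: Y \to X$ is precisely the base change of $\phi$ along the faithfully flat morphism $X \to X/G$. Since "projective" is stable under base change and, crucially here, satisfies faithfully flat descent — this is the point one must be slightly careful about, but it holds because projectivity can be tested via the existence of a relatively ample line bundle together with properness and finite type, both of which descend, and the relatively ample bundle descends via the principal bundle Picard isomorphism exactly as in part (1) — we conclude that $\phi$ is projective. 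For the l.c.i. case, I would write $f = g \circ i$ with $i: Y \to W$ a regular closed embedding and $g: W \to X$ smooth, both in $G-\mathbf{Sch}_k$; applying part (1) to get $W/G \in \mathbf{Sch}_k$ with $W \to W/G$ a principal bundle, the maps $i$ and $g$ descend to $\bar\imath: Y/G \to W/G$ and $\bar g: W/G \to X/G$, and again $i = \bar\imath \times_{X/G} X$ (up to the identification $Y = X\times_{X/G}(Y/G)$, etc.), so $\bar\imath$ is a closed immersion by descent and regular because "regular immersion" is an étale-local condition on the target which can be checked after the faithfully flat base change $W \to W/G$, while $\bar g$ is smooth by descent of smoothness. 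Hence $\phi = \bar g \circ \bar\imath$ exhibits $\phi$ as an l.c.i.-morphism.

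The main obstacle is the quasi-projectivity/projectivity assertions: unlike separatedness, finiteness, or smoothness, (quasi-)projectivity is \emph{not} listed among the properties stable under faithfully flat descent in \S\ref{ffd-section}, which is exactly the reason the category $G-\mathbf{Sch}_k$ was introduced. The resolution is not to descend quasi-projectivity as an abstract property but to descend the ample line bundle itself: the hypothesis that everything lies in $G-\mathbf{Sch}_k$ guarantees a $G$-linearized ample bundle, and the isomorphism $\mathrm{Pic}(Q) \xrightarrow{\sim} \mathrm{Pic}^G(P)$ for a principal bundle $P \to Q$ (\S\ref{sect-gmod}) produces the required ample bundle downstairs, with ampleness then checked étale-locally where the bundle trivializes. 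Once this mechanism is in place, every remaining verification is a routine descent argument for a property already known to descend, together with the standard fact that smoothness and regular immersions are insensitive to faithfully flat base change.
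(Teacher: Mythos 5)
There is a genuine gap, and it sits exactly at the heart of the lemma: the \emph{existence} of the relevant quotients as schemes is never actually established in your argument. In part (1) you invoke \cite[Proposition 0.9]{mfk-git}, but that result only says that a geometric quotient of a free action, \emph{once it exists}, is a principal $G$-bundle; it gives no existence. You acknowledge this ("provided the quotient is itself a scheme"), but your subsequent verification is circular: you describe $V\times_{A/G}(X\times A)/G\cong V\times X$ for an \'etale trivializing $V\to A/G$, and you descend an ample bundle via $\mathrm{Pic}((X\times A)/G)\cong\mathrm{Pic}^G(X\times A)$ --- both of which presuppose that $(X\times A)/G$ and the principal bundle structure already exist. (Effectivity of the descent datum is not automatic; it is exactly what needs the ample bundle, and making that precise is what \cite[Proposition 7.1]{mfk-git} does.) The paper's proof of (1) is precisely this: the ample $G$-linearized bundle on $X$ pulls back to a $G$-linearized $p_2$-ample bundle on $X\times A$, and then MFK Proposition 7.1, applied over the existing quotient $A\to A/G$, produces the principal bundle $X\times A\to(X\times A)/G$ in $\mathbf{Sch}_k$. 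The same gap recurs in your l.c.i.\ argument in part (2): you "apply part (1) to get $W/G$," but $W$ is not of the form $X\times A$ with diagonal action, so part (1) does not apply; the paper again gets the existence of $W\to W/G$ from MFK 7.1, using that a linearizable ample bundle on $W$ is $g$-ample (\cite[Proposition 4.6.13(v)]{EGA-II}) and that $X/G$ already exists.

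Two further remarks. Your identification $Y\cong X\times_{X/G}(Y/G)$ (a map of $G$-torsors over $Y/G$ is an isomorphism) and the descent of regular immersions and smoothness are fine, and your plan to descend a relatively $f$-ample linearized bundle and use descent of relative ampleness could in principle be pushed through for the projectivity claim. But it is unnecessarily heavy: since $Y/G$ and $X/G$ are assumed to lie in $\mathbf{Sch}_k$, hence are quasi-projective over $k$, the paper simply descends properness (which is on the list in \S\ref{ffd-section}) and uses that a proper morphism between quasi-projective $k$-schemes is projective. So the correct skeleton of your argument survives, but you must replace the two existence claims by an appeal to \cite[Proposition 7.1]{mfk-git} (or an equivalent effective-descent statement), and the projectivity step is better handled by the properness-plus-quasi-projectivity observation than by descending ample bundles.
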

\begin{proof}
\begin{enumerate}
\item Let $p_2 : X \times A \to A$ be the projection. 
  Then $X \times A$ has a $G$-linearizable $p_2$-ample line
  bundle because of the assumption on $X$. 
  The statement follows from \cite[Proposition 7.1]{mfk-git}.

\item 
  If $f$ is projective then it is proper and by descent $\phi$ is
  proper as well, but a proper, quasi-projective morphism between
  $k$-schemes is projective. 
  If $f$ is an equivariant l.c.i.-morphism we may factor it in 
  $G-\mathbf{Sch}_k$ as a regular immersion $i:Y \to W$ followed by a
  smooth morphism $g:W\to X$. 
  If $\mcal{L}$ is a linearizable ample bundle on $W$ then
  it is $g$-ample \cite[Proposition 4.6.13(v)]{EGA-II} and so by
  \cite[Proposition 7.1]{mfk-git} the principle $G$-bundle quotient
  $W \to W/G$ exists and $W/G\to X/G$ is smooth. 
  By descent the closed embedding $Y/G \to W/G$ is regular and
  therefore $\phi: Y/G \to X/G$ is an l.c.i.-morphism.  
\end{enumerate}
\end{proof}


\section{Equivariant Algebraic Cobordism}\label{EAC}

Fix a linear algebraic group $G$.
From now on, all the $G$-schemes to be considered are in the category
$G-\mathbf{Sch}_k$ introduced in the previous section. 
In this section we define the equivariant algebraic cobordism of a
$G$-scheme.

\subsection{Construction}
\begin{definition}
Say that $\{(V_i,U_i)\}$ is a \textit{good system of representations}
for a linear algebraic group $G$ if each $V_i$ a $G$-representation, 
$U_i\subseteq V_i$ is a $G$-invariant open satisfying the following
conditions: 
\begin{enumerate}
\item $G$ acts freely on $U_i$ and $U_i/G$ exists in $\mathbf{Sch}_k$.

\item For each $i$ there is a $G$-representation $W_{i}$ so that
  $V_{i+1} = V_i\oplus W_{i}$. 

\item $U_i \subseteq U_{i+1}$ and the inclusion factors as
  $U_i=U_{i}\oplus\{0\}\subseteq U_i\oplus W_{i} \subseteq U_{i+1}$.

\item $\lim_{i\to\infty}\dim V_{i} = \infty$. 

\item ${\rm codim}_{V_i} (V_i - U_i) < {\rm codim}_{V_j} (V_j - U_j)$,
  for $i < j$.
\end{enumerate}
\end{definition}

\begin{remark}
The existence of one such system follows from 
\cite[Remark 1.4]{tot-chow}.
\end{remark} 

Let $X$ be in $G-\mathbf{Sch}_k$ and consider a good system of
representations $\{ (V_i,U_i) \}$.
For convenience we write 
$X \times^G U_i = (X \times U_i)/G$. By Lemma \ref{des-pro} this
quotient exists, $X \times^G U_i$ is quasi-projective and the
morphisms $\phi_{ij}:X\times^{G} U_{i}\to X\times^{G}U_{j}$ are
l.c.i.-morphisms. If $X$ is smooth then by descent we have that 
$X \times^G U_i$ is in $\mathbf{Sm}_k$. 

\begin{definition}\label{constreac}
Let $G$ be a linear algebraic group.
Let $\{ (V_i,U_i)\}$ be a good system of representations.
For $X$ in $G-\mathbf{Sch}_k$ we define the 
\emph{equivariant algebraic cobordism group} as
\[
  \Omega_*^G (X) = 
  \varprojlim_{i}\Omega_* \left( X \times^G U_i \right). 
\]
The \emph{$n$-th equivariant algebraic cobordism group} 
of $X$ is defined as
\[
  \Omega_n^G (X) = \varprojlim_i \Omega_{n - \dim G + \dim U_i}
  \left( X \times^G U_i \right).
\]
If $X$ is an equidimensional and smooth $G$-scheme, define
$\, \Omega^*_G (X) = \varprojlim_{i}\Omega^* \left( X \times^G U_i
\right)\,$
and
$\Omega^n_G(X) = \varprojlim_{i} \Omega^{n} 
\left( X \times^G U_i \right)$.
Equivariant algebraic cobordism with rational coefficients is defined
by the completed tensor product and we write 
$\Omega_{G}^{*}(X)_{\Q} = \Omega_{G}^{*}(X)\widehat{\otimes}\Q$.  
\end{definition}

\begin{remark}
In the rest of the paper we consider the ungraded group $\Omega_{*}^{G}(X)$ for
simplicity. The reader interested in the graded situation may replace this by 
$\oplus\Omega_{n}^{G}(X)$, all results hold in this case with a few
appropriate changes.
\end{remark}

\begin{remark}\label{krishcompare}
In \cite{Krishna} another version of equivariant algebraic cobordism
was defined. The definition there yields a theory isomorphic to the
one we have defined, which we briefly explain. 
We restrict our discussion to smooth schemes so that we may index by
codimension but the discussion applies more generally provided one
indexes by dimension. 
Define the coniveau filtration 
\[
  F^{r} \Omega^{n}(X) 
= 
  \{ x \in \Omega^{n}(X) \, |\, j^{*}(x) = 0 \;\textrm{for some} \;
  j : X-S \to X, \, \,S\,\textrm{closed},\textrm{codim}(S)\geq r\}.
\] 
Then with our notational conventions for the meaning of the pairs
$\{(V_{j},U_{j})\}$, the definition of equivariant algebraic cobordism
given in \cite{Krishna} is
$\lim_{i}\Omega^{n}(X\times^{G}U_{i})/F^{c(i)}\Omega^{n}(X\times^{G}U_{i})$
where $c(i) = \textrm{codim}_{V_{i}}(V_{i}-U_{i})$.  
For a morphism $f:X\to Y$ we have $f^{*} (F^{r} \Omega^{n} (Y)) \subseteq
F^{r}\Omega^{n}(X)$. 
Also $F^{r}\Omega^{n}(X) = 0$ for $r>\dim X$. 
This means that whenever $j$ is large enough so that 
$c(j)>\dim(X\times^{G}U_{i})$ we have that 
$\phi^{*}_{ij}(F^{c(j)}\Omega^{n}(X\times^{G}U_{j})) = 0$. 

This implies that 
\[
  \Omega^{n}(X\times^{G}U_{j})/F^{c(j)}\Omega^{n}(X\times^{G}U_{j})
  \to
  \Omega^{n}(X\times^{G}U_{i})/F^{c(i)}\Omega^{n}(X\times^{G}U_{i})
\]
factors through 
$\Omega^{n}(X\times^{G}U_{i}) \to
\Omega^{n}(X\times^{G}U_{i})/F^{c(i)}\Omega^{n}(X\times^{G}U_{i})$ and
so the map of towers $\{\Omega^{n}(X\times^{G}U_{i})\}_{i}\to
\{\Omega^{n}(X\times^{G}U_{i})/F^{c(i)}\Omega^{n}(X\times^{G}U_{i})\}_{i}$
induces an isomorphism on inverse limits. 
\end{remark}

\subsubsection{}\label{well-def}

To see that our theory $\Omega_*^G$ is well-defined we will require
the following.

\begin{proposition}\label{prolim}
Let $\pi: E \to X$ a vector bundle over a scheme $X$ of rank $r$. 
Let $U \subseteq E$ be an open subscheme with closed complement
$S=E - U$.
\begin{enumerate}
\item If $X$ is affine and ${\rm codim}_{E}S > \dim X$ then
  $\pi_{|U}^*:\Omega_k(X) \to \Omega_{k + r }(U)$ is an
  isomorphism for all $k$. 

\item For a non affine scheme $X$, there is an integer $n(X)$ depending
  only on $X$, such that  
  $\pi^*_{|U} : \Omega_k (X) \to \Omega_{k + r}(U)$ is an 
  isomorphism for all $k$ whenever ${\rm codim}_E S > n(X)$.  
\end{enumerate}
\end{proposition}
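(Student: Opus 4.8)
The plan is to reduce the general (non-affine) case to the affine case, which in turn should follow from the localization sequence together with the homotopy property of $\Omega_*$. For part (1), where $X$ is affine: the morphism $\pi_{|U}: U \to X$ factors as the open immersion $j: U \hookrightarrow E$ followed by $\pi: E \to X$. Since $\pi^*: \Omega_k(X) \to \Omega_{k+r}(E)$ is an isomorphism by homotopy invariance (the extended homotopy axiom (EH), applied to the trivial $E$-torsor $E \to X$, or more simply the basic homotopy property of algebraic cobordism for vector bundles), it suffices to show $j^*: \Omega_{k+r}(E) \to \Omega_{k+r}(U)$ is an isomorphism. First I would feed this into the localization sequence (LS) for the closed immersion $S = E - U \hookrightarrow E$:
\[
  \Omega_{k+r}(S) \to \Omega_{k+r}(E) \xrightarrow{j^*} \Omega_{k+r}(U) \to 0.
\]
Surjectivity of $j^*$ is automatic. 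For injectivity it suffices that $\Omega_{k+r}(S) \to \Omega_{k+r}(E)$ be zero, and for that it is enough that $\Omega_{k+r}(S) = 0$ entirely, which would follow from a dimension bound: since $\Omega_*(Z)$ is generated by classes $[Y \to Z]$ with $Y$ smooth irreducible, and $\dim Y \le \dim Z$ for such a class to be nonzero in the relevant degree, we get $\Omega_m(S) = 0$ for $m > \dim S$. Now $\dim S = \dim E - \operatorname{codim}_E S < (r + \dim X) - \dim X = r$, so $\dim S < r \le k + r$ for $k \ge 0$; for $k < 0$ one has $\Omega_{k+r}(U) = \Omega_k(X)$ both vanishing or matching degree-wise, but in fact the cleanest statement is that $\operatorname{codim}_E S > \dim X$ forces $\dim S < r \le k+r$ whenever $k\ge 0$, and a separate easy check handles $k<0$. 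Thus $\Omega_{k+r}(S) = 0$, $j^*$ is an isomorphism, and composing with $\pi^*$ gives part (1).

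For part (2), the idea is to cover $X$ by finitely many affine opens and induct, using Mayer--Vietoris-type reasoning assembled from the localization sequence. Since $X$ is a quasi-projective scheme of finite type over $k$, it has a finite affine open cover $X = X_1 \cup \cdots \cup X_m$; set $n(X)$ to be, say, $\dim X + m$ or a similar quantity built from the combinatorics of such a cover (the precise bookkeeping I would defer to the details). The restriction of $\pi: E \to X$ over each $X_\ell$ and over the intersections is still a vector bundle of rank $r$, and the restriction of $S$ to each piece still has large codimension. Arguing by induction on the number of affine opens: for the open $X' = X_1 \cup \cdots \cup X_{m-1}$ and the affine $X_m$, with $X'' = X' \cap X_m$, one compares the localization sequences for $U$ and for $X$ along the decomposition $X = X' \sqcup_{X''} X_m$ (more precisely, the long exact sequences obtained by excising the closed complements), and uses the five lemma together with the inductive hypotheses on $X'$, $X_m$, $X''$ (each of which has a smaller affine cover, so the threshold integer drops) to conclude that $\pi^*_{|U}$ is an isomorphism over $X$ once $\operatorname{codim}_E S$ exceeds the accumulated bound.

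The main obstacle I expect is the bookkeeping in part (2): algebraic cobordism does not literally satisfy a Mayer--Vietoris sequence for open covers as an exact sequence of the same homology groups, only the one-sided localization sequence (LS), so one must be careful to set up the induction using only closed-complement localization sequences and the right-exactness they provide — chasing a diagram of two localization sequences and applying the five lemma requires knowing exactness on the left, which comes precisely from the vanishing $\Omega_m(S\cap(\text{piece})) = 0$ established via the codimension hypothesis, and one needs to verify the codimension of $S$ restricted to each stratum of the cover is still large enough at each inductive stage. Tracking how the threshold $n$ grows as the cover shrinks, and checking that the localization sequences splice correctly (in particular that the connecting maps vanish in the relevant degrees), is the delicate part; the affine base case (1) and the homotopy invariance of $\pi^*$ are the routine inputs.
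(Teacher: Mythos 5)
Your argument for injectivity in part (1) rests on the claim that $\Omega_m(S)=0$ whenever $m>\dim S$, and this is false: unlike Chow groups, algebraic cobordism does not vanish above the dimension. Already for $S=\operatorname{Spec} k$ one has $\Omega_m(k)\cong\mathbb{L}_m\neq 0$ for every $m\geq 0$ (classes $[Y\to S]$ only require $Y$ smooth projective of dimension $m$, with no bound by $\dim S$). This is exactly the phenomenon highlighted in the introduction of the paper --- $\Omega_n(X)$ can be nonzero for arbitrarily large $n$ --- and it is the reason the equivariant theory is defined as an inverse limit rather than by a stabilized finite approximation as for Chow groups. So the localization sequence gives you surjectivity of $\pi_{|U}^*$ for free, but you cannot get injectivity by killing $\Omega_{k+r}(S)$; no codimension hypothesis will make that group vanish. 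The paper's proof of injectivity is entirely different: one produces a section $s\colon X\to U$ of $\pi_{|U}$, so that $s^*\pi_{|U}^*=\mathrm{id}$. For a trivial bundle $E=\mathbb{A}^r\times X$ the hypothesis $\operatorname{codim}_E S>\dim X$ means $r>\dim S$, so the closure of the projection of $S$ to $\mathbb{A}^r$ is a proper closed subset; a rational point $\alpha$ in its complement gives $(\{\alpha\}\times X)\cap S=\varnothing$, i.e.\ a section landing in $U$. For a general bundle over affine $X$ one chooses a surjection $\mathbb{A}^N\times X\to E$ and applies the same argument upstairs.

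Your plan for part (2) also diverges from the paper and inherits the same defect: the five-lemma/Mayer--Vietoris induction over an affine cover needs left-exactness of the localization sequences, which you again propose to obtain from the false vanishing of cobordism of the closed pieces, and algebraic cobordism has no Mayer--Vietoris sequence to splice in its place. The paper instead uses Jouanolou's trick: choose an affine torsor $p\colon\tilde X\to X$ under a vector bundle with $\tilde X$ affine, set $n(X)=\dim\tilde X$, pull everything back, apply the affine case (1) to $\tilde E=p^*E$ and $\tilde U=p^*U$ (noting $\operatorname{codim}_{\tilde E}\tilde S=\operatorname{codim}_E S$), and use the extended homotopy axiom for the torsors $\tilde X\to X$ and $\tilde U\to U$ to transfer the isomorphism back down. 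As written, both halves of your proposal would fail at the same step, so the argument needs to be rebuilt around a section (or some other retraction) rather than a vanishing statement.
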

\begin{proof}
\begin{enumerate}
\item We always have the commutative diagram
  \[
    \xymatrix{
              \Omega_{k+ r}(E) \ar[r]^{j^*} & 
              \Omega_{k+ r}(U) \ar[r] 
              & 0 \\
              \Omega_{k} (X) \ar[u]^{\pi^*}
                \ar[ur]_{\pi_{|U}^{*}}  
              & & 
             }
  \]
  where the top row is exact and $\pi^*$ is an isomorphism. 
  In particular 
  \begin{equation*}
    \pi_{|U}^*:\Omega_k (X) \to \Omega_{k + r} (U)
  \end{equation*}
  is surjective for all $k$.
  To show injectivity we proceed in cases. 
  First suppose that $E=\mathbb{A}^r \times X$ is trivial. 
  It suffices to find a section $s: X \to U$ of $\pi_{|U}$. 

  For each rational point $\xi \in \mathbb{A}^r$ define  
  $Z (\xi) = \left( \{\xi\}\times X \right) \cap S$.
  This is a closed subscheme $Z(\xi) \subseteq S$. If we can find a
  rational point $\xi \in \A^{r}$ such that $Z(\xi) = 
  \varnothing$ then the inclusion of $\{\xi\} \times X$ in $U$ defines 
  a section of $\pi_{|U}: U \to X$ and we are done. 
  Note that the condition $\mathrm{codim}_E \, S  > \dim X$ is
  equivalent to the condition $r > \dim S$. 

  Now consider the projection
  $\pi_{|S}:S \subseteq \mathbb{A}^{r}\times X \to \mathbb{A}^{r}$ and
  the closure of the image $\overline{\pi(S)} \subseteq
  \mathbb{A}^r$. 
  Since $S\to \overline{\pi(S)}$ is a dominant morphism between
  $k$-schemes we have $r > \dim S \geq \dim \overline{q(S)}$. 
  In particular the complement $\mathbb{A}^{r} - \overline{\pi(S)}
  \subseteq \mathbb{A}^{r}$ is a dense open subset.  
  We conclude that $\mathbb{A}^{r} - \overline{\pi(S)}$ has a rational
  point ($k$ is infinite). 
  Let $\alpha$ be a rational point in $\mathbb{A}^{r} -
  \overline{\pi(S)}$.
  Since $Z(\xi)\neq \varnothing$ whenever $\pi Z(\xi) = \xi$  and 
  $\alpha \notin \overline{\pi(S)}$ we must have that 
  $Z(\alpha) = \varnothing$.  

  More generally, since $X$ is affine every vector bundle
  admits a surjection from a trivial bundle.
  Let $p:\mathbb{A}^{N} \times X \to E$ be such a surjection of vector
  bundles on $X$. Let $W=p^{-1}U$ and set 
  $\tilde{S} = (\mathbb{A}^{N} \times X) - W$. 
  Then $\dim \tilde{S} = \dim S + N - r < N$. 
  By the proof of the first case above we have a section $X\to W$ and
  therefore we obtain a section $X\to W \to U$ of $\pi_{|U}: U \to X$. 

\item First assume that $X$ is affine. If ${\rm codim}_{E}S > \dim X$, 
  then by the first part of the proposition we have 
  $\pi^*_{|U} : \Omega_{k}(X) \to \Omega_{k+ {\rm rank} E}(U)$. 
  Thus in this case we may take $n(X) = \dim X$.

  For the general case we employ Jouanolou's trick to find an affine
  torsor $p: \tilde{X} \to X$ with $\tilde{X}$ affine. In this case we
  claim that we may take $n(X) = n(\tilde{X})$. 
  Consider the vector bundle $\tilde{E} := p^* E \to \tilde{X}$ and
  the open subscheme $\tilde{U}= p^* U \subseteq p^{-1}E$. 
  Set $\tilde{S} = \tilde{E} - \tilde{U}$. 
  From the first part of the proposition we see that since $\tilde{X}$
  is affine we have that    
  $\Omega_{*}(\tilde{X}) \to \Omega_{*}(\tilde{U})$ is an 
  isomorphism for 
  ${\rm codim}_{E}S = {\rm codim}_{\tilde{E}} \tilde{S} > \dim
  \tilde{X}$.  
  Since $\tilde{X} \to X$ and $\tilde{U} \to U$ are torsors for some
  vector bundle we have the following chain of isomorphisms 
  \[
    \xymatrix{
              \Omega_k (X) \ar[r]^{p^*} &
	      \Omega_k (\tilde{X}) \ar[r]^{\tilde{\pi}^*_{|U}} &
	      \Omega_{k + r} (\tilde{U}) &
	      \Omega_{k + r} (U) \ar[l]_{p^*_{|U}}.
             }
  \]
\end{enumerate}
\end{proof}

\begin{theorem}\label{eacwelldef}
For any $X \in G-\mathbf{Sch}_k$, $\Omega_*^G (X)$ is well defined. 
\end{theorem}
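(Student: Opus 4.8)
The statement ``$\Omega_*^G(X)$ is well defined'' means that the inverse limit $\varprojlim_i \Omega_*(X\times^G U_i)$ does not depend, up to canonical isomorphism, on the choice of good system of representations $\{(V_i,U_i)\}$. The plan is to show two things: first, that for a \emph{fixed} good system the transition maps $\phi_{ij}^*:\Omega_*(X\times^G U_j)\to \Omega_*(X\times^G U_i)$ are eventually isomorphisms in each fixed degree $n$, so the limit stabilizes; and second, that any two good systems admit a common refinement, which then forces the two limits to agree.

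For the first point, fix $n$ and consider the map $\phi_{ij}^*$ for $i<j$. By condition (2) of a good system, $U_j$ is an open subscheme of $V_i\oplus W_i\oplus\cdots$; more precisely $X\times^G U_i\hookrightarrow X\times^G U_j$ sits inside the vector bundle $X\times^G(U_i\oplus W_i)\to X\times^G U_i$, and iterating, $X\times^G U_j$ is an open subscheme of a vector bundle over $X\times^G U_i$ whose complement has codimension controlled by ${\rm codim}_{V_j}(V_j-U_j)$. Here is where Proposition~\ref{prolim} enters: set $Y=X\times^G U_i$, let $E\to Y$ be the relevant vector bundle (of rank equal to $\dim V_j-\dim U_i$, say), and let $U=X\times^G U_j\subseteq E$ with complement $S$. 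One checks ${\rm codim}_E S = {\rm codim}_{V_j}(V_j-U_j)$, which by condition (5) grows without bound as $j\to\infty$ (using condition (4)). Proposition~\ref{prolim}(2) gives an integer $n(Y)$ depending only on $Y=X\times^G U_i$ such that $\pi^*_{|U}:\Omega_k(Y)\to\Omega_{k+r}(U)$ is an isomorphism once ${\rm codim}_E S>n(Y)$. Since $\pi_{|U}\circ(\text{inclusion})$ realizes $\phi_{ij}$ up to the torsor projections, we conclude that for each fixed $i$, the map $\phi_{ij}^*$ is an isomorphism for all $j$ sufficiently large. Hence the pro-system $\{\Omega_n(X\times^G U_i)\}_i$ is pro-isomorphic to a constant system and the inverse limit is just $\Omega_n(X\times^G U_i)$ for any sufficiently large $i$ (and in particular the $\varprojlim^1$ term vanishes, so no subtlety with non-exactness of the limit arises).

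For the independence of the choice, given two good systems $\{(V_i,U_i)\}$ and $\{(V_i',U_i')\}$, form the ``interleaved'' system with terms $V_i\oplus V_i'$ and opens $U_i\times U_i'$ (after reindexing so that the codimension condition (5) holds — one passes to a subsequence of each, using condition (4) to guarantee codimensions can be made arbitrarily large). Projection onto each factor gives compatible maps $U_i\times U_i'\to U_i$ and $U_i\times U_i'\to U_i'$, realizing $X\times^G(U_i\times U_i')$ as an open subscheme of a vector bundle over $X\times^G U_i$ (and likewise over $X\times^G U_i'$) with complement of large codimension; the same application of Proposition~\ref{prolim}(2) shows these comparison maps are eventually isomorphisms in each degree. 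Therefore all three inverse limits coincide.

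\textbf{Main obstacle.} The technical heart is the bookkeeping in the first paragraph: identifying $X\times^G U_j$ as an open subscheme of an explicit vector bundle over $X\times^G U_i$ and verifying that the codimension of the complement is exactly ${\rm codim}_{V_j}(V_j-U_j)$, so that conditions (4) and (5) of a good system let us invoke Proposition~\ref{prolim}(2) with the \emph{uniform} bound $n(X\times^G U_i)$. One must be careful that this bound depends only on the base $X\times^G U_i$ and not on $j$; this is precisely what part (2) of Proposition~\ref{prolim} (via Jouanolou's trick) provides. The rest is formal manipulation of pro-objects and inverse limits, together with the observation that everything is compatible with the torsor projections used to pass between the $U_i\oplus W_i$ and $U_{i+1}$ descriptions in condition (3).
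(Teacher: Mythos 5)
The first half of your argument is not correct. Within a single good system there is no morphism $X\times^G U_j \to X\times^G U_i$ at all: the projection $V_j=V_i\oplus W\to V_i$ does not carry $U_j$ into $U_i$, so $X\times^G U_j$ is \emph{not} an open subscheme of a vector bundle over $X\times^G U_i$. What is true (and is the content of Lemma \ref{ML}) is the opposite containment: $X\times^G(U_i\oplus W)$ is a vector bundle over $X\times^G U_i$ and an open subscheme of $X\times^G U_j$, but the codimension of its complement in $X\times^G U_j$ is controlled by $\mathrm{codim}_{V_i}(V_i-U_i)$ --- the \emph{smaller} index --- and does not grow as $j\to\infty$. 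Consequently the transition maps $\phi_{ij}^*$ are surjective but not eventually isomorphisms in a fixed degree, and the pro-system is not pro-constant. Concretely, for $G=\mathbb{G}_m$ and $X=\mathrm{Spec}\,k$ one has $\Omega^0(\mathbb{P}^{i-1})\cong\bigoplus_{m=0}^{i-1}\Omega^{-m}(k)\,t^m$, which grows strictly with $i$; every transition map has nonzero kernel, and $\Omega^0_{\mathbb{G}_m}(k)$ is the degree-zero part of $\Omega^*(k)[[t]]$, not attained at any finite stage. This failure of stabilization is precisely the difference from the Chow-group situation emphasized in the introduction and is the reason the definition is an inverse limit; your claims of pro-constancy and vanishing of $\varprojlim^1$ cannot be part of a correct proof, and your ``main obstacle'' paragraph (identifying the complement's codimension with $\mathrm{codim}_{V_j}(V_j-U_j)$) rests on this faulty bookkeeping.

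The second half --- comparing the two systems through mixed opens --- is the right idea and is essentially the paper's proof via Bogomolov's double fibration argument: $X\times^G(U_i\oplus V_j')$ is a vector bundle over $X\times^G U_i$ (here the codimension of the complement of $X\times^G(U_i\oplus U_j')$ \emph{is} $\mathrm{codim}_{V_j'}(V_j'-U_j')$, which grows with $j$), and Proposition \ref{prolim}(2) gives an isomorphism once this codimension exceeds $n(X\times^G U_i)$. But this requires letting the second index $j$ go to infinity \emph{for each fixed} $i$; the paper accordingly identifies both $\varprojlim_i\Omega_*(X\times^G U_i)$ and $\varprojlim_i\Omega_*(X\times^G U_i')$ with the double limit $\varprojlim_i\varprojlim_j\Omega_*(X\times^G(U_i\oplus U_j'))$. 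Your diagonal system $U_i\oplus U_i'$ only yields the comparison if $\mathrm{codim}_{V_i'}(V_i'-U_i')>n(X\times^G U_i)$ for cofinally many $i$, which is not automatic since the bound $n(X\times^G U_i)$ also grows with $i$; the reindexing you allude to would have to be chosen adaptively (a zigzag), and as written you justify it by ``the same application'' of the incorrect identification from your first paragraph. So the independence argument is salvageable but incomplete as stated, while the stabilization claim must be removed.
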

\begin{proof}
To see that our definition does not depend on the choice of the
sequence $\{ (V_i,U_i) \}$ we proceed as in \cite{tot-chow} by using
Bologomolov's double fibration argument. 
Let $\{ (V_i^{\prime},U_i^{\prime})\}$ be some other good system of
representations. Consider a fixed $U_i$. Since $G$ acts freely on
$U_i$ it acts freely on $U_i \oplus V_{j}^{\prime}$ too. 
Thus $X \times^G (U_i \oplus V_{j}^{\prime}) \to X \times^G U_i$ is a
vector bundle.  
The second part of Proposition \ref{prolim} says that there is an
integer $N_{i} = N(X\times^{G} U_{i})$ such that  
$\; \Omega_* (X \times^G U_i) \cong \Omega_*( X \times^G
(U_i \oplus U_j^{\prime}))\;$ for $j>N_{i}$. Thus 
$\ilim_{i}\Omega_* (X \times^G U_i) \cong
\ilim_{i}\ilim_{j}\Omega_*( X \times^G (U_i \oplus
U_j^{\prime}) )$. 
A  similar argument for $X \times^G U_i^{\prime}$ shows that 
$\ilim_{i}\Omega_* (X \times^G U_i^{\prime}) \cong
\ilim_{i}\ilim_{j}\Omega_*( X \times^G (U_i \oplus  
U_j^{\prime}) )$.
\end{proof}

\begin{example}\label{tact}
If $G \cong \langle e \rangle$ is the trivial linear algebraic group,
then the projections  
$X \times U_{i} \to U_{i}$ induce an isomorphism
$\Omega_* (X)\xrightarrow{\cong}\Omega_*^{\langle e \rangle} (X)$. 
Indeed, we have that $X \times^G U_i = X \times U_i$ for any $U_i$ in
the system.  
The statement follows from Proposition \ref{prolim}.
\end{example}

\subsubsection{}

We verify that the Mittag-Leffler condition holds on the
system defining $\Omega_G^*$.

\begin{lemma}\label{ML}
Let $\{(V_i,U_i)\}$ be a good system of representations. For every
$i<j$ in the system, let $\phi_{ij} : X \times^G U_i \to X \times^G
U_j$ be the induced morphism of schemes. Then 
\[
  \phi_{ij}^* : \Omega_* \left( X \times^G U_j \right) 
  \longrightarrow 
  \Omega_* \left( X \times^G U_i \right)
\]
is a surjection. 
\end{lemma}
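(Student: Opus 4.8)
\textbf{Plan for the proof of Lemma \ref{ML}.}
The key geometric observation is that the morphism $\phi_{ij}:X\times^{G}U_i\to X\times^{G}U_j$ factors, up to identifying things appropriately, as an open immersion into a vector bundle over $X\times^{G}U_j$. Concretely, recall from condition (2) of the definition of a good system that $V_j=V_i\oplus W$ for some representation $W$ (taking $W=W_i\oplus\cdots\oplus W_{j-1}$), and by condition (3) the inclusion $U_i\hookrightarrow U_j$ factors as $U_i=U_i\oplus\{0\}\subseteq U_i\oplus W\subseteq U_j$. Since $G$ acts freely on $U_i$ it acts freely on $U_i\times W$, and the projection $U_i\times W\to U_i$ descends to a morphism $X\times^{G}(U_i\times W)\to X\times^{G}U_i$ which is a vector bundle (it is the bundle associated to the principal $G$-bundle $X\times U_i\to X\times^{G}U_i$ and the representation $W$). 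The plan is therefore to factor $\phi_{ij}$ as
\[
X\times^{G}U_i \xrightarrow{\ s\ } X\times^{G}(U_i\times W) \xrightarrow{\ \iota\ } X\times^{G}U_j,
\]
where $s$ is the zero-section of this vector bundle and $\iota$ is the open immersion induced by the $G$-invariant open inclusion $X\times(U_i\times W)\subseteq X\times U_j$.

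With this factorization in hand, I would argue as follows. On the one hand $s^{*}:\Omega_{*}(X\times^{G}(U_i\times W))\to\Omega_{*}(X\times^{G}U_i)$ is an isomorphism: it is the pullback along a vector bundle, and the extended homotopy axiom (EH) for the oriented Borel--Moore homology theory $\Omega_{*}$ (or more directly the homotopy invariance built into algebraic cobordism) says such pullbacks are isomorphisms. On the other hand $\iota^{*}:\Omega_{*}(X\times^{G}U_j)\to\Omega_{*}(X\times^{G}(U_i\times W))$ is surjective: writing $X\times^{G}(U_j\smallsetminus(U_i\times W))$ — more precisely, letting $Z$ be the closed complement of $X\times^{G}(U_i\times W)$ inside $X\times^{G}U_j$, which exists as a closed subscheme since $X\times(U_i\times W)\subseteq X\times U_j$ is a $G$-invariant open — the localization sequence (LS) of Levine--Morel gives exactness of $\Omega_{*}(Z)\to\Omega_{*}(X\times^{G}U_j)\xrightarrow{\iota^{*}}\Omega_{*}(X\times^{G}(U_i\times W))\to 0$, so $\iota^{*}$ is surjective. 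Since $\phi_{ij}^{*}=s^{*}\circ\iota^{*}$ is a composite of a surjection followed by an isomorphism, it is surjective, which is the claim.

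The main point requiring care is the identification of $\phi_{ij}$ with the stated composite, i.e.\ checking that descent behaves well: that $X\times^{G}(U_i\times W)$ exists in $\mathbf{Sch}_k$ (which follows from Lemma \ref{des-pro} since $G$ acts freely on $U_i\times W$ and this quotient is a vector bundle over the existing quotient $X\times^{G}U_i$), that the open inclusion $X\times(U_i\times W)\subseteq X\times U_j$ is $G$-equivariant so that $\iota$ is a well-defined open immersion of quotients, and that the composite $\iota\circ s$ agrees with $\phi_{ij}$. These are straightforward but must be spelled out because projective/quasi-projective hypotheses and the category $G-\mathbf{Sch}_k$ are in play; the relevant descent statements are exactly those collected in \S\ref{ffd-section} and Lemma \ref{des-pro}. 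Once the factorization is established, the homological input is immediate from (EH) and (LS). One should also note the argument is insensitive to the grading shift in the definition of $\Omega^{G}_{n}$, so the graded statement follows as well.
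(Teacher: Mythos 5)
Your argument is exactly the paper's proof: factor $\phi_{ij}$ as the zero-section $X\times^{G}U_i\to X\times^{G}(U_i\oplus W)$ followed by the open immersion into $X\times^{G}U_j$, with (EH) handling the first map and the localization sequence (LS) handling the second. One minor wording fix: $s$ is the zero-section rather than the bundle projection $\pi$, so $s^{*}$ is an isomorphism because $s^{*}\pi^{*}=(\pi\circ s)^{*}=\mathrm{id}$ and $\pi^{*}$ is an isomorphism by (EH), not because $s$ itself is a vector bundle map.
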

\begin{proof}
The morphism $U_i\to U_j$ factors as 
$U_i \to U_i \oplus W  \subseteq U_j$, where $W$ is a representation 
(depending on $i$ and $j$). 
Now, $X \times^G U_i \to X \times^G (U_i\oplus W)$ is
the inclusion of the zero section of a vector bundle and $ X \times^G
(U_i \oplus W) \subseteq X \times^G U_j$ is an open inclusion. 
Both maps induce surjections on algebraic cobordism. 
\end{proof}

\subsubsection{}\label{sect-eclass}

Let $\{ (V_i,U_i) \}$ be a good system of representations. 
Let $X$ be in $G-\mathbf{Sch}_k$.
For $U_i \subseteq U_j \,$, write
$\phi_{ij} : X \times^G U_i \to X \times^G U_j$ for the induced morphism.
By definition, a $G$-equivariant cobordism class on $X$
is a sequence of cobordism classes 
$\{ \alpha_i \in \Omega_* (X \times^G U_i) \mid 
\phi_{ij}^* ([\alpha_j]) = [\alpha_i] \}$.

An equivariant projective morphism $f:Y\to X$ with $Y$ in
$G-\mathbf{Sm}_k$ defines an equivariant cobordism class as follows. 
The assumption on $Y$ implies that $f_{i}:Y \times^G U_i \to X
\times^G U_i$ is projective and $Y \times^G U_i$ is smooth. 
Therefore for each $i$ we have the induced cobordism class
$[f_{i}:Y \times^G U_i \to X \times^G U_i]$ in 
$\Omega_* (X \times^G U_i)$. 
By descent we have the transverse Cartesian square
\[
  \xymatrix{
            Y \times^{G} U_{i} \ar[r] \ar[d] & 
            Y \times^{G} U_{j} \ar[d] \\
            X \times^{G} U_{i} \ar[r]^{\phi_{ij}} & 
            X \times^{G} U_{j}
           }
\]
which shows that 
$\phi_{ij}^{*}[f_{i}:Y \times^G U_i \to X \times^G U_i] 
= [f_{j}:Y \times^G U_j \to X \times^G U_j]$. 
Therefore $f:Y\to X$ induces the class 
$([Y \times^G U_i \to X \times^G U_i])$ in $\Omega_*^G (X)$. 
Note however that $Y\to X$ is not a unique representation for this
class.

\subsection{Computations}\label{computations}

\subsubsection{Coefficient Ring in the Case of a Torus
  Action}\label{CRTA} 

Let $T = \left( \mathbb{G}_m \right)^r$.
Consider the good system of representations 
$\{ (V_i, U_i) \}$, where $V_i = \left( \mathbb{A}^i \right)^r$ 
and $U_i = \left( \mathbb{A}^i - \{ 0 \} \right)^r$.
The action of $T$ on $V_{i}$ is given by letting the $k$-th factor of 
$\mathbb{G}_{m}$ act on the $k$-th factor of $\mathbb{A}^{i}$ via the
formula 
\[
  \mathbb{G}_m \times \mathbb{A}^i \to \mathbb{A}^i,
\qquad
  (g, a_1, \ldots, a_i) \mapsto (g \cdot a_1, \ldots, g \cdot a_i).
\]
Then $T$ acts freely on $U_i$ and 
$U_i / T = \left( \mathbb{P}^{i-1} \right)^r$. 
A direct computation shows
\[
  \Omega^*_T (k) 
=
  \varprojlim_i \Omega^* \left( \left( \mathbb{P}^{i-1} \right)^r
  \right)
=
  \varprojlim_i \frac{\Omega^*(k) [t_1, \ldots, t_r]}
  {\left( t_1^{i-1}, \ldots, t_r^{i-1} \right)}
=
  \Omega^*(k) [[ t_1, \ldots, t_r ]],
\]
where each $t_i$ is a variable of degree 1.

\subsubsection{$\mathbb{P}^n$ with
  a Weighted $\mathbb{G}_m$-Action} 

Let $\mathbb{G}_m$ act on $\mathbb{P}^{n}$ with the action 
\[
  \mathbb{G}_m \times \mathbb{P}^{n} \to \mathbb{P}^{n}, 
\qquad
  \left( g , (a_0 : \cdots : a_n) \right) \mapsto 
  \left( g^{m_0} \cdot a_0 : \cdots : g^{m_n} \cdot a_n \right).
\]
Let $\{ (V_i,U_i) \}$ be the good system of representations considered
in $\S$\ref{CRTA}. For each $U_i$ in the system we have the 
$\mathbb{P}^n$-bundle
$\mathbb{P}^{n} \times^{\mathbb{G}_m} U_i \to \mathbb{P}^{i-1}$ is
a $\mathbb{P}^n$-bundle. 
As a $\mathbb{P}^{i-1}$-scheme, we have that 
$\mathbb{P}^{n} \times^{\mathbb{G}_m} U_i
\cong 
\mathbb{P} \left( \mathcal{O} (m_0) \oplus \cdots \oplus \mathcal{O}
(m_n) \right).$
By \cite[Lemma 4.1.4]{lev&mor-book} we get
\[
  \Omega^* \left( \mathbb{P}^{n} \times^{\mathbb{G}_m} U_i \right)
=
  \frac{\Omega^* \left( \mathbb{P}^{i-1} \right) [\xi]}
  {\left( \xi - \mathrm{c}_1 \left( \mathcal{O} (m_0) \right) \right) 
  \cdots  
  \left( \xi - \mathrm{c}_1 \left( \mathcal{O} (m_n) \right) \right)},
\]
where $\xi$ is a variable of order one. If we let 
$t=\mathrm{c}_{1}(\mcal{O}(1))\in \Omega^{*}(\P^{i-1})$ then 
$\mathrm{c}_1 \left( \mathcal{O} (a) \right) = [a]_{\Omega}(t)$ where
$[a]_{\Omega}(t)$ 
is defined by $F_{\Omega}(t,[a-1]_{\Omega}t)$. 
Taking the limit we obtain
\[
  \Omega^*_{\mathbb{G}_{m}} \left( \mathbb{P}^{n} \right) 
= \frac{\Omega^* (k) [[ t ]] [ \xi ]}{\left( \xi - [m_0]_{\Omega}
  (t )\right) \cdots \left( \xi - [m_n]_{\Omega} ( t) \right)}.
\]

\subsubsection{A Torus Acting Trivially}

Let $T= \mathbb{G}_{m}^{\times r}$ and let $X$ be a scheme considered
in $T-\mathbf{Sm}_k$ with the trivial action.
Then 
$\Omega_T^{*} (X) \cong  \Omega^* (X) \widehat{\otimes}_{\Omega^*(k)} 
\Omega^*_T (k)$ of $\Omega^*(k)$-modules.
This follows from 
\[
  \Omega^*_T (X) 
=  
  \varprojlim_i \Omega^* \left( X \times \left( \mathbb{P}^{i-1}
  \right)^r \right) \\
\cong
  \varprojlim_i \frac{\Omega^* (X) [t_1, \ldots , t_r]}
  {\left( t_1^i, \ldots, t_r^i \right)}  
\cong
  \Omega^* (X) [[ t_1, \ldots, t_r ]],
\]
where the first isomorphism is the statement of 
\cite[Lemma 4.1.4]{lev&mor-book}. 

\subsubsection{Coefficient Ring in the Case of a $GL_n$-Action}

Let $M_{n \times (n+i)}$ be the space of $n \times (n+i)$ matrices.
Consider the good system of representations $\{ (V_i, U_i ) \}$, where
$V_i = M_{n \times (n+i)}$ with $\mathrm{GL}_n$ acting by
multiplication on the left and $U_i$ is the subset of matrices of 
maximal rank. 
We have that $U_i / \mathrm{GL}_n \cong {\rm \mathrm{Gr}} (n,n+i)$, 
where $\mathrm{Gr}(n,n+i)$ is the Grassmannian of $n$-planes in
$k^{n+i}$.   
Let $\mathbb{F}_{n+i}$ denote the variety of complete flags in
$k^{n+i}$.  
Let $\phi:\mathbb{F}_{n+i}\to \mathrm{Gr}(n,n+i)$ be the map which
sends the flag 
$Fl_{n+i} = \{0\subseteq F^{1}\subseteq\cdots \subseteq
F^{n+i}=k^{n+i}\}$ to the $n$-plane $F^{n}\subseteq k^{n+i}$.  
The induced map $\phi^{*}$ is injective on cobordism with rational
coefficients by Proposition \ref{lhrat} and because the Grassmannian
is cellular we conclude that
$\phi^{*}:\Omega^{*}(\mathrm{Gr}(n,n+i))\to\Omega^{*}(\mathbb{F}_{n+i})$
is injective integrally.
Let $\mcal{W}_{k}$ be the tautological $k$-plane bundle on
$\mathbb{F}_{n+i}$ (i.e. the fiber of $\mcal{W}_{k}$ on the flag
$Fl_{n+i}$ is $F^{k}$) and let $\mcal{L}_{k} = \mcal{W}_{k}/\mcal{W}_{k-1}$. 
In \cite[Theorem 2.6]{jens&val-schubert} the cobordism of the complete
flag variety is shown to be 
\[
  \Omega^*(\mathbb{F}_{n+i}) 
\iso 
  \Omega^* (k) [x_1, \ldots x_{n+i}]/IS_{n+i},   
\]
where $x_j=c_{1}(\mcal{L}_{j})$ and $S_{n+i}$ is the graded ring of
symmetric polynomials in the $x_j$ with coefficients in
$\Omega^{*}(k)$ and $IS_{n+i}$ is the ideal generated by 
the symmetric polynomials of strictly positive polynomial degree.

The cobordism of $\mathrm{Gr}(n,n+i)$ is generated by the Chern
classes $c_{r}(\mcal{E}_{n})$, where $\mcal{E}_{n}$ is the
tautological 
$n$-plane bundle on $\mathrm{Gr}(n,n+i)$ 
(so $\phi^{*}\mcal{E}_{n} = \mcal{W}_{n}$). 
The total Chern class of $\mcal{W}_{n}$ is 
$c(\mcal{W}_{n}) = \prod_{k=1}^{n} c(\mcal{L}_{k})
= \prod_{k=1}^{n}(1+x_{k})$ from which we see that
$\phi^{*}c_{k}(\mcal{E}_{n})$ is the $k$-th elementary symmetric
polynomial in the $x_{1},\ldots, x_{n}$. Thus $\phi^{*}$ gives us
the identification 
\[
  \Omega^{*}(\mathrm{Gr}(n,n+i)) 
= 
  S_{n}/IS_{n+i} \subseteq \Omega^* (k) [x_1, \ldots x_{n+i}]/ IS_{n+i},
\]
and therefore we see that 
$\Omega^*_{\mathrm{GL}_n} (k) 
\iso 
\Omega^{*}(k) [[\eta_1,\ldots, \eta_n]]$,
where $\eta_j$ is of degree $j$.

\subsubsection{Roots of unity}

Let $\mu_n$ be the algebraic group of roots of unity.
Let $X$ be in $\mu_n - \mathbf{Sm}_k$.
We show that $\Omega^*_{\mu_n} (k) \cong \Omega^*
(k)[[\xi]] / [n]_{\Omega} \cdot \xi$.

Consider the Kummer sequence
$1\to \mu_n \to \G_m \xrightarrow{(-)^n} \G_m \to 1$,
which is exact in the \'etale topology. 
We obtain an \'etale $\G_m$-torsor 
\[
 \mathbb{A}^i - \{0\} / \mu_n
\longrightarrow 
  \mathbb{A}^i - \{0\} / \mathbb{G}_m = \mathbb{P}^{i-1}. 
\]
Since ${\rm Pic} (X) = {\rm H}^1_{Zar}(X; \mathcal{O}_{X}^{*}) = 
{\rm H}^{1}_{et}(X; \mathbb{G}_m)$, we know that
$\mathbb{G}_m$-torsors correspond to line bundles on $X$.
The line bundle associated to this $\G_m$-torsor is
$\mathcal{O}_{\mathbb{P}^{i-1}}(-n) = L$.
Let $\pi : L \to \mathbb{P}^{i-1}$ denote the structural morphism, and  
write $L_{0}$ for the complement of the zero section $s$. 
There is an embedding $j: \A^{i} - \{0\} / \mu_n \to L$ such that 
$j \left( \A^{i} - \{0\} / \mu_n \right) = L_{0}$. 
Now from the localization sequence
\[
  \Omega^{*} (\mathbb{P}^{i-1})
\xrightarrow{s_*} 
  \Omega^* (L) 
\to 
  \Omega^* (L_{0}) 
\to 
  0
\]
we see that $\Omega^* (\mathbb{P}^{i-1}) \xrightarrow{\pi^*} \Omega^*
(L) \to \Omega^* (L_{0})$ is surjective.  

Since 
$s^*:\Omega^* (L) \to \Omega^* (\mathbb{P}^{i-1})$ 
is an isomorphism and $s^*s_* (L) = \mathrm{c}_1(L)$ 
\cite[Proposition 4.1.15]{lev&mor-book}
it follows that 
$\Omega^*(\mathbb{A}^{i} - \{0\} / \mu_n) 
= \Omega^*(\mathbb{P}^{i-1}) / \mathrm{c}_{1}(\mathcal{O}(-n))$.
The result follows from the equalities 
$\mcal{O}(-n) = \mcal{O}(-1)^{\otimes n}$ and $\xi = c_1\mcal{O}(-1)$.


\section{Formal Properties of Equivariant Algebraic Cobordism}
\label{secfour}


In this section we establish some properties of $\Omega_*^G$.
Mainly the properties are of two types, one the equivariant analogues
of the formal properties of 
an oriented Borel-Moore theory, and the other are expected from an
equivariant cohomology theory.
Proceeding as in the end of $\S$ \ref{well-def}, shows that
all of the following properties are independent of the choice of a
good system of representations.

From now on, let $\{ (V_i,U_i) \}$ be a fixed good system of
representations for a linear algebraic group $G$.  
All the $G$-schemes are in the category
$G-\mathbf{Sch}_k$ introduced in $\S$ \ref{gcat}. 

\subsection{Variances}\label{sect-var}

Since any l.c.i. morphism $f: Y \to X$ of relative dimension $d$ in 
$G-\mathbf{Sch}_k$ induces for any $U_i$ in the system a
l.c.i. morphism $f_i : X \times^G U_i \to Y \times^G U_i$ 
in {\bf Sch}$_k$, we obtain a sequence of pull-backs maps 
$f_i^* : \Omega_* (X \times^G U_i) \to \Omega_* (Y \times^G U_i)$. 
By naturality of $\Omega_*$ we have a functorial
\emph{pull-back map}
\[
   f^*_G := \varprojlim_i f_i^* : \Omega_*^G (X) \to
\Omega_{*+d}^G (Y),
\]
which is a morphism of Abelian groups.

If $f:Y \to X$ is a projective morphism in 
$G-\mathbf{Sch}_k$, 
by Lemma \ref{des-pro} we have a sequence of projective morphisms 
$f_i : Y \times^G U_i \to X \times^G U_i$ in {\bf Sch}$_k$. 
We have a transverse Cartesian diagram 
\[
  \xymatrix{
            Y \times^G U_i \ar[r]^{f_i} \ar[d] &
            X \times^G U_i \ar[d] \\
            Y \times^G U_j \ar[r]_{f_j} &
            X \times^G U_j 
           }
\]
for any $i<j$. 
Therefore the $f_{i}$ are compatible with the transition maps in the
system and we obtain an induced functorial 
\emph{push-forward map}
$\, f^G_* :\Omega_*^G (Y) \to \Omega_*^G (X)$.

\begin{proposition}\label{epf}
\begin{enumerate}
\item Let $f: X^{\prime} \to X$ and $g: Y^{\prime} \to Y$ be
  l.c.i. morphisms in $G-\mathbf{Sch}_k$.
  If $u \in \Omega_*^G (X)$ and $v \in \Omega_*^G (Y)$ then
  $\left( f \times g \right)^*_G (u \times v) 
  = f^*_G (u) \times g^*_G (v)$. 

\item Let $f: X^{\prime} \to X$ and $g: Y^{\prime} \to Y$ be
  projective morphisms in $G-\mathbf{Sch}_k$.
  If $u^{\prime} \in \Omega_*^G (X^{\prime})$ and 
  $v^{\prime} \in \Omega_*^G (Y^{\prime})$ then
  $\left( f \times g \right)_*^G (u^{\prime} \times v^{\prime}) 
  = f_*^G (u^{\prime}) \times g_*^G (v^{\prime})$.

\item Let $f: X \to Z$ be a projective morphism in 
  $G-\mathbf{Sch}_k$, and $g : Y \to Z$ is a l.c.i. morphism in  
  $G-\mathbf{Sch}_k$.
  If we have a transverse Cartesian diagram 
\[
  \xymatrix{
            W \ar[r]^{g^{\prime}} \ar[d]_{f^{\prime}} &
            Y \ar[d]^f \\
            Z \ar[r]_{g} &
            X 
           }
\]
  then
  $g^*_G \circ f_*^G = f^{\prime \, G}_* \circ g^{\prime \, *}_G$.
\end{enumerate}
\end{proposition}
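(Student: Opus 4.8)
The plan is to establish each of the three compatibilities by reducing them to the corresponding non-equivariant statements for algebraic cobordism $\Omega_*$ on each finite-dimensional approximation $X\times^G U_i$, and then passing to the inverse limit. The key observation is that the equivariant pull-backs and push-forwards $f^*_G$, $f^G_*$ are by construction $\varprojlim_i$ of the maps $f_i^*$, $(f_i)_*$, so it suffices to check that the claimed identities hold levelwise and are compatible with the transition maps $\phi_{ij}^*$ in the tower; the latter compatibility has already been verified in Section~\ref{sect-var} (for pull-backs and push-forwards separately) and in Section~\ref{sect-eclass}.

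For part (1), I would use the compatibility of the external product with pull-backs for $\Omega_*$ on $\mathbf{Sch}_k$. The main point is to identify the approximation of a product: for $X$ and $Y$ in $G-\mathbf{Sch}_k$, one has a natural map relating $(X\times Y)\times^G U_i$ to $(X\times^G U_i)\times(Y\times^G U_i)$, or more precisely one works with the diagonal $G$-action and the approximation $(X\times Y)\times^G U_i$ together with projections to $X\times^G U_i$ and $Y\times^G U_i$. The external product $\Omega_*^G(X)\otimes\Omega_*^G(Y)\to\Omega_*^G(X\times Y)$ is defined levelwise using these; then $(f\times g)_i = f_i\times g_i$ under the appropriate identifications (using that $U_i\times U_i$ contains the diagonal copy of $U_i$ and invoking Proposition~\ref{prolim} to compare), and the levelwise identity $(f_i\times g_i)^*(u_i\times v_i)=f_i^*(u_i)\times g_i^*(v_i)$ is exactly the OBM axiom of compatibility of variances with products. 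Taking $\varprojlim_i$ gives the result. Part (2) is entirely analogous, using the OBM axiom that push-forwards are compatible with external products, together with Lemma~\ref{des-pro} which guarantees that $f_i$ and $g_i$ remain projective on the approximations.

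For part (3), I would base-change the transverse Cartesian square along the good system: for each $i$, applying $(-)\times^G U_i$ to the square $W\to Y\to X$, $W\to Z\to X$ yields a Cartesian square
\[
  \xymatrix{
            W\times^G U_i \ar[r]^{g_i'} \ar[d]_{f_i'} &
            Y\times^G U_i \ar[d]^{f_i} \\
            Z\times^G U_i \ar[r]_{g_i} &
            X\times^G U_i
           }
\]
in $\mathbf{Sch}_k$, which is transverse because transversality is detected by the vanishing of appropriate $\mathrm{Tor}$-sheaves and is preserved under the faithfully flat base change $X\times U_i\to X$ (this uses the descent discussion of $\S$\ref{ffd-section}, together with the fact that $f_i$ is projective and $g_i$ is l.c.i. by Lemma~\ref{des-pro}). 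The OBM axiom on compatibility of push-forwards and pull-backs in transverse Cartesian squares then gives $g_i^*\circ(f_i)_* = (f_i')_*\circ (g_i')^*$ for every $i$, and passing to $\varprojlim_i$ yields $g^*_G\circ f^G_* = f'^{\,G}_*\circ g'^{\,*}_G$.

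I expect the main obstacle to be not the levelwise axioms, which are immediate from \cite{lev&mor-book}, but the bookkeeping needed to verify that the constructions are genuinely functorial in $i$ in a way compatible with all three operations simultaneously — in particular, in part (1), carefully matching the approximation $(X\times Y)\times^G U_i$ with the product of approximations and checking that the identification is compatible with the tower maps (so that the external product on $\Omega_*^G$ is well-defined in the first place), and in part (3), confirming that the base-changed squares are transverse. Since all of these compatibilities with transition maps have already been recorded in $\S$\ref{sect-var} and $\S$\ref{sect-eclass}, the proof will ultimately be a short reduction to the non-equivariant OBM axioms followed by an application of $\varprojlim_i$, which is exact on surjective towers (Mittag-Leffler, Lemma~\ref{ML}) but in any case preserves the equalities in question without needing exactness.
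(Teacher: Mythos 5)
Your proposal is correct and takes essentially the same route as the paper: parts (1) and (2) are proved by taking inverse limits of the levelwise OBM identities on the approximations $X\times^G U_i$, and part (3) by using faithfully flat descent together with flat base change for $\mathrm{Tor}$ to see that the approximated squares remain transverse Cartesian, applying the OBM axiom levelwise, and passing to the limit via compatibility with the transition maps. The paper's own proof is in fact terser than your write-up, simply asserting that (1) and (2) follow by taking limits of the corresponding identities.
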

\begin{proof}
(1) and (2) follows from taking the limit of the corresponding
identities.
Now we proceed to show (3).
By descent and flat base change for ${\rm Tor}_n$
we obtain the transverse Cartesian diagram
\[
  \xymatrix{
            W \times^G U_i \ar[r]^{g_i^{\prime}} 
                           \ar[d]^{f_i^{\prime}} &
            Y \times^G U_i \ar[d]_{f_i} \\
            Z \times^G U_i \ar[r]_{g_i} &
            X \times^G U_i
           }
\]
in {\bf Sch}$_k$ for each $i$. Since $g_{i}^{*} \circ f_{i\,*} 
= f^{\prime }_{i\,*} \circ g_{i}^{\prime \, *}$ 
and this is compatible with the
transition maps in the inverse system, we obtain 
$ g^*_G \circ f_*^G = f^{\prime \, G}_* \circ g^{\prime \, *}_G$.
\end{proof}

\subsection{Localization Sequence}\label{sect-ls}

\begin{theorem}
Let $X$ in $G-\mathbf{Sch}_k$.
Let $\imath : Z \to X$ be an invariant closed subscheme and let $j: U
\to X$ be the open complement. 
Then we have the exact sequence  
\begin{equation*}\label{LS}
  \xymatrix{
            \Omega_*^G (Z)
              \ar[r]^{\imath_*^G} & 
            \Omega_*^G (X)  \ar[r]^{j^*_G} &
            \Omega^G_* (U) \ar[r]  &
            0 
           }.
  \tag{{\bf LS}}
\end{equation*}
\end{theorem}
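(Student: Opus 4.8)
The plan is to deduce the equivariant localization sequence from the localization sequence (LS) for ordinary algebraic cobordism applied at each finite level of the good system of representations, together with the exactness properties of the inverse limit functor. First I would fix the good system $\{(V_i,U_i)\}$ and observe that, since $Z \to X$ is an invariant closed immersion and $U = X - Z$ is the invariant open complement, passing to the mixed quotients gives for each $i$ a closed immersion $\imath_i : Z \times^G U_i \to X \times^G U_i$ with open complement $j_i : U \times^G U_i \to X \times^G U_i$; this uses that forming $(-)\times^G U_i$ commutes with taking invariant closed subschemes and open complements, which follows by faithfully flat descent from the corresponding fact for $(-) \times U_i$. Then the Levine–Morel localization sequence gives, for every $i$, an exact sequence
\[
  \Omega_*(Z \times^G U_i) \xrightarrow{\imath_{i\,*}} \Omega_*(X \times^G U_i) \xrightarrow{j_i^*} \Omega_*(U \times^G U_i) \to 0.
\]
These sequences are compatible with the transition maps $\phi_{ij}$ (the relevant squares are transverse Cartesian by descent, so push-forwards and pull-backs commute with the transition maps, exactly as in \S\ref{sect-var}), so we obtain a short exact sequence of towers of abelian groups.

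Next I would take the inverse limit over $i$. In general $\varprojlim$ is only left exact, so one gets exactness at $\Omega_*^G(Z)$ and at $\Omega_*^G(X)$ for free, but surjectivity of $j^*_G$ onto $\Omega_*^G(U)$ requires that the first derived functor $\varprojlim^1$ of the tower $\{\ker(j_i^*)\}_i = \{\mathrm{im}(\imath_{i\,*})\}_i$ vanish. The standard sufficient condition is the Mittag-Leffler property. I would verify this by invoking Lemma \ref{ML}: the transition maps $\phi_{ij}^* : \Omega_*(X \times^G U_j) \to \Omega_*(X \times^G U_i)$ are surjective, and the same lemma (applied to $Z$ in place of $X$) shows $\Omega_*(Z\times^G U_j) \to \Omega_*(Z \times^G U_i)$ is surjective; hence the images $\mathrm{im}(\imath_{i\,*}) \subseteq \Omega_*(X\times^G U_i)$ form a tower with surjective transition maps (a quotient of a surjective tower map restricted along a surjective tower is surjective), so it is Mittag-Leffler and $\varprojlim^1$ vanishes. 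Applying $\varprojlim$ to the short exact sequence of towers
\[
  0 \to \{\mathrm{im}(\imath_{i\,*})\}_i \to \{\Omega_*(X\times^G U_i)\}_i \to \{\Omega_*(U\times^G U_i)\}_i \to 0
\]
then yields exactness of $\Omega_*^G(X) \xrightarrow{j^*_G} \Omega_*^G(U) \to 0$, and similarly the tower $\{\mathrm{im}(\imath_{i\,*})\}$ receives a surjection from $\{\Omega_*(Z\times^G U_i)\}$ with Mittag-Leffler kernel $\{\ker(\imath_{i\,*})\}$, giving that $\Omega_*^G(Z) \to \Omega_*^G(X)$ has image exactly $\varprojlim \mathrm{im}(\imath_{i\,*}) = \ker(j^*_G)$.

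The one point that needs genuine care — and which I expect to be the main obstacle — is the bookkeeping around $\varprojlim^1$: to conclude surjectivity of $j^*_G$ I must check Mittag-Leffler not just for the original towers but for the tower of images $\{\mathrm{im}(\imath_{i\,*})\}$, and to identify $\ker(j^*_G)$ with the image of $\Omega_*^G(Z)$ I again need that the surjection of towers $\{\Omega_*(Z\times^G U_i)\} \twoheadrightarrow \{\mathrm{im}(\imath_{i\,*})\}$ has a $\varprojlim^1$-acyclic kernel, i.e. that $\{\ker \imath_{i\,*}\}$ is Mittag-Leffler. Both facts follow formally once one knows the ambient towers have surjective transition maps (Lemma \ref{ML}), since a sub- or quotient-tower of a tower with surjective transitions, sitting in a short exact sequence where the outer terms have surjective transitions, inherits the Mittag-Leffler condition; I would spell this out using the standard six-term $\varprojlim$–$\varprojlim^1$ exact sequence associated to a short exact sequence of towers. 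Everything else is the routine transfer of the Levine–Morel axioms through the quotient construction already carried out in \S\ref{sect-var}.
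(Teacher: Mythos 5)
Your route is the paper's route: apply the Levine--Morel localization sequence to each $X\times^{G}U_{i}$, check via descent that $Z\times^{G}U_{i}\to X\times^{G}U_{i}$ is a closed immersion with open complement $U\times^{G}U_{i}$, check compatibility with the transition maps (transverse Cartesian squares for the push-forwards, functoriality for the pull-backs), and pass to the inverse limit using Lemma \ref{ML}. Those steps are fine, and so is your treatment of exactness at $\Omega^{G}_{*}(U)$: the tower $\{\mathrm{im}(\imath_{i*})\}$ is a quotient of the tower $\{\Omega_{*}(Z\times^{G}U_{i})\}$, whose transitions are surjective by Lemma \ref{ML}, hence it has surjective transitions, its $\varprojlim^{1}$ vanishes, and left exactness of $\varprojlim$ also gives $\ker(j^{*}_{G})=\varprojlim_{i}\mathrm{im}(\imath_{i*})$.

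The gap is at the step you yourself single out as delicate: exactness at $\Omega^{G}_{*}(X)$, i.e.\ surjectivity of $\Omega^{G}_{*}(Z)\to\varprojlim_{i}\mathrm{im}(\imath_{i*})$, which needs $\varprojlim^{1}\{\ker\imath_{i*}\}=0$. The formal lemma you invoke to get this --- that in a short exact sequence of towers whose middle and quotient terms have surjective transitions the kernel tower is Mittag-Leffler --- is false. Take $A_{i}=2^{i}\Z\subset B_{i}=\Z$ with identity transitions on $B_{i}$ and quotient $C_{i}=\Z/2^{i}\Z$ with the reduction maps: $B$ and $C$ have surjective transitions, yet $\{2^{i}\Z\}$ is not Mittag-Leffler and $\varprojlim^{1}\{2^{i}\Z\}\neq 0$. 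Equivalently, the levelwise exact system $\Z\to\Z/2^{i}\Z\to 0$, with surjective transitions in every tower, has limit $\Z\to\varprojlim_{i}\Z/2^{i}\Z\to 0$, which is not exact in the middle. So surjectivity of the transition maps in the three ambient towers does not formally produce compatible lifts $z_{i}\in\imath_{i*}^{-1}(x_{i})$, and as written your argument does not establish exactness at $\Omega^{G}_{*}(X)$. This is, in fairness, exactly the point that the paper's own one-sentence proof (which cites the same three ingredients: the nonequivariant sequence, Lemma \ref{ML}, and naturality) treats as immediate; to close it one needs a genuine argument that the towers $\{\ker\imath_{i*}\}$ (or the towers of cosets $\imath_{i*}^{-1}(x_{i})$) are Mittag-Leffler, coming from the geometry of the transition maps (zero sections of bundles followed by open immersions) or from a degreewise coniveau-type stabilization, not from the six-term $\varprojlim$--$\varprojlim^{1}$ sequence alone.
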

\begin{proof}
The proof follows directly from (\ref{AC}), Lemma \ref{ML} and
naturality of the push-forwards and pull-backs.
\end{proof}

\subsection{Projective Bundle Axiom}\label{sect-pb}

Let $E \to X$ be a $G$-equivariant vector bundle of rank $r+1$.
We know that $E \times^G U_i$ is a vector bundle of
rank $r+1$ over $X \times^G U_i$ for each $U_i$ in the system. 
Let $q_i : \mathbb{P}_i:= \mathbb{P}\left( E \times^G U_i \right) \to
X \times^G U_i$ be the associated projective bundle. 
As a projective bundles over $X \times^G U_i$ we have that 
$\mathbb{P} ( E \times^G U_i )$ is isomorphic to 
$\mathbb{P} (E) \times^G U_i$. 
For convenience we will work with $\mathbb{P} ( E \times^G U_i )$.
By (PB) for $\Omega_*$ we have an isomorphism
\begin{equation*}
  \Phi_i := 
  {\textstyle \prod_{n=0}^{r} \; \xi_i^n \cdot q_i^*} : 
  \prod_{n=0}^{r} \Omega_{* -r + n} \left( X \times^G U_i
  \right) 
  \longrightarrow
  \Omega_* \left( \mathbb{P}_i \right),  
\end{equation*}
where $\xi_i = {\rm c}_1 \left( \mathcal{O}_{\mathbb{P}_i} (1)
\right)$.  
Consider the morphisms
$\phi_{ij} : E \times^G U_i \to E \times^G U_j$
for $i<j$.
Notice that 
$\, \mathcal{O}_{\mathbb{P}_i} (1) =
\mathbb{P}(\phi_{ij})^* \left( \mathcal{O}_{\mathbb{P}_j} (1)
\right)$, so $\mathbb{P}(\phi_{ij})^* \left( \xi_j \right) = \xi_i$. 
Thus, the isomorphisms $\Phi_{i}$ induce the isomorphism
\[
  \varprojlim_{i}{\textstyle 
  (\prod_{n=0}^{r} \; \xi_i^n \cdot q_i^*)}:
  \varprojlim_i \left(\prod_{n=0}^{r}  \Omega_{*-r+n} 
  \left( X \times^G U_i \right) \right)
  \longrightarrow
  \varprojlim_i \Omega_* \left( \mathbb{P} (E \times^G U_j) \right).
\]
We have proved the following.

\begin{proposition}\label{G-PB}
Let $E \to X$ be a $G$-equivariant vector bundle of rank $r+1$ in
$G-\mathbf{Sch}_k$. 
With the notation above, set $\xi_*^G := \varprojlim_i \xi_i$, 
and $\Omega^*_G \left( \mathbb{P} (E) \right) := \varprojlim_i
\Omega_* \left( \mathbb{P} (E \times^G U_i) \right)$.
Then
\begin{equation}\label{EPB}
  \Phi^G_{X,E} :=
  \prod_{n=0}^r \left( \xi_*^{G} \right)^n \cdot q^*_G :
  \prod_{n=0}^{r} \Omega^G_{*-r+n} (X)  
  \longrightarrow
  \Omega_*^G \left( \mathbb{P} (E) \right) 
\tag{{\bf PB}}
\end{equation}
is an isomorphism.
\end{proposition}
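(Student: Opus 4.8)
The plan is to deduce the equivariant projective bundle formula level-by-level from axiom (PB) for $\Omega_*$ and then pass to the inverse limit; most of the needed input is already recorded in the discussion preceding the statement, so the proof mainly consists of organizing it and checking that no derived-limit obstruction intervenes.

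First I would record that $\mathbb{P}(E)$ belongs to $G-\mathbf{Sch}_k$ — combine the natural $G$-linearization of $\mathcal{O}_{\mathbb{P}(E)}(1)$ with the pull-back of a $G$-linearizable ample bundle on $X$ to produce a $G$-linearizable ample bundle. Then by Lemma \ref{des-pro} each quotient $\mathbb{P}(E)\times^G U_i$ exists, and by descent it is the projective bundle $\mathbb{P}(E\times^G U_i)\to X\times^G U_i$ of the rank-$(r+1)$ vector bundle $E\times^G U_i$, so axiom (PB) for $\Omega_*$ supplies the isomorphism $\Phi_i=\prod_{n=0}^{r}\xi_i^n\cdot q_i^*$ for every $i$. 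Next I would check that the $\Phi_i$ are morphisms of the two inverse systems: on the target the transition maps are the pull-backs $\mathbb{P}(\phi_{ij})^*$ along the base change of $\phi_{ij}$, on the source they are $\prod_n\phi_{ij}^*$, and the required commutativity reduces to the two facts already noted, $q_i^*\circ\phi_{ij}^*=\mathbb{P}(\phi_{ij})^*\circ q_j^*$ (functoriality of l.c.i. pull-backs for the relevant transverse Cartesian square) and $\mathbb{P}(\phi_{ij})^*\xi_j=\xi_i$ (naturality of $\mathrm{c}_1$ together with $\mathcal{O}_{\mathbb{P}_i}(1)=\mathbb{P}(\phi_{ij})^*\mathcal{O}_{\mathbb{P}_j}(1)$). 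Since each $\Phi_i$ is an isomorphism, the family $\{\Phi_i^{-1}\}$ is automatically compatible as well, so $\varprojlim_i\Phi_i^{-1}$ is a two-sided inverse of $\varprojlim_i\Phi_i=\Phi^G_{X,E}$; here I also use that a finite product commutes with the inverse limit, which identifies the source $\varprojlim_i\prod_{n=0}^r\Omega_{*-r+n}(X\times^G U_i)$ with $\prod_{n=0}^r\Omega^G_{*-r+n}(X)$ and aligns the degree shifts with the non-equivariant formula.

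Because one is taking the inverse limit of a level-wise isomorphism, no $\varprojlim^1$ issue arises, and in any case the relevant Mittag--Leffler condition was verified in Lemma \ref{ML}. I do not expect a serious obstacle here: the only points needing care are the compatible identification $\mathbb{P}(E\times^G U_i)\cong\mathbb{P}(E)\times^G U_i$ across the whole system, so that the classes $\xi_i$ are the restrictions of a single equivariant class $\xi_*^G$, and the routine bookkeeping of the degree shifts; both follow from faithfully flat descent and axiom (PB).
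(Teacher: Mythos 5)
Your argument is correct and is essentially the paper's own proof: apply the non-equivariant axiom (PB) to each $\mathbb{P}(E\times^G U_i)\cong\mathbb{P}(E)\times^G U_i$, use $\mathbb{P}(\phi_{ij})^*\mathcal{O}_{\mathbb{P}_j}(1)=\mathcal{O}_{\mathbb{P}_i}(1)$ to see the $\Phi_i$ are compatible with the transition maps, and pass to the inverse limit of level-wise isomorphisms. The extra points you flag (that $\mathbb{P}(E)$ lies in $G-\mathbf{Sch}_k$ and that finite products commute with $\varprojlim$) are sound details the paper leaves implicit, and no further argument is needed.
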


\subsection{Extended Homotopy Axiom}\label{sect-eh}

\begin{proposition}
Let $E \to X$ be a $G$-vector bundle of rank $n$ and $p: Y \to X$ be a
$E$-torsor in $G-\mathbf{Sch}_k$, then
\begin{equation*}
  p^*_G : \Omega_*^G (X) \longrightarrow \Omega_{*+n}^G (Y) 
\tag{{\bf EH}}
\end{equation*}
is an isomorphism.  
\end{proposition}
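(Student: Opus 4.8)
The plan is to reduce the equivariant Extended Homotopy axiom to the non-equivariant one, which holds for $\Omega_*$ by the Levine--Morel axioms recalled in $\S\ref{OBM}$. The key observation is that for each $U_i$ in the good system of representations, applying $(-)\times^G U_i$ to the data $p\colon Y\to X$ and $E\to X$ produces the analogous data over the quotient. Explicitly, $E\times^G U_i \to X\times^G U_i$ is a vector bundle of rank $n$ (this is already noted in $\S\ref{sect-pb}$), and $p_i\colon Y\times^G U_i \to X\times^G U_i$ is an $(E\times^G U_i)$-torsor. The latter point is where descent enters: the torsor structure $Y\times_X E \xrightarrow{\cong} Y\times_X Y$ is $G$-equivariant, so passing to quotients by the free $G$-action on $X\times U_i$ (and on $Y\times U_i$, $E\times U_i$) yields an isomorphism $(Y\times^G U_i)\times_{X\times^G U_i}(E\times^G U_i)\xrightarrow{\cong}(Y\times^G U_i)\times_{X\times^G U_i}(Y\times^G U_i)$, exhibiting $p_i$ as a torsor for the vector bundle $E\times^G U_i$.

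Granting this, the (EH) axiom for $\Omega_*$ gives that each
\[
  p_i^*\colon \Omega_*(X\times^G U_i)\longrightarrow \Omega_{*+n}(Y\times^G U_i)
\]
is an isomorphism. First I would check that these maps are compatible with the transition morphisms $\phi_{ij}$ in the two inverse systems: since the square relating $Y\times^G U_i \to Y\times^G U_j$ and $X\times^G U_i\to X\times^G U_j$ commutes and $p$ is equivariant, naturality of pull-backs for l.c.i.-morphisms (as used already to define $p_G^*$ in $\S\ref{sect-var}$) gives $\phi_{ij}^*\circ p_j^* = p_i^*\circ\phi_{ij}^*$. Therefore the $p_i^*$ assemble to a morphism of inverse systems which is an isomorphism levelwise, and hence $p_G^* = \varprojlim_i p_i^*$ is an isomorphism of the inverse limits. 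That concludes the proof.

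The main obstacle is the descent step: verifying that $p_i\colon Y\times^G U_i\to X\times^G U_i$ is genuinely an $(E\times^G U_i)$-torsor, rather than merely a map that looks locally like one. This follows from the fact that being a torsor under a given vector bundle is detected by the existence of a certain isomorphism $Y\times_X E\cong Y\times_X Y$ over $Y$, and $G$-quotients commute with the relevant fiber products here because $Y\times U_i\to Y\times^G U_i$ is faithfully flat (indeed a principal $G$-bundle) and the formation of these products is compatible with flat base change; alternatively one can invoke faithfully flat descent from $\S\ref{ffd-section}$ for the property of being a torsor, noting that after the flat base change $X\times U_i\to X\times^G U_i$ the bundle pulls back to a trivial-over-the-cover situation. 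Once this identification is in hand, everything else is a formal passage to the inverse limit exactly as in the Projective Bundle and Localization arguments above. One should also remark, as at the start of $\S\ref{secfour}$, that the resulting isomorphism is independent of the chosen good system of representations.
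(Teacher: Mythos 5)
Your proposal is correct and follows essentially the same route as the paper: show that the $G$-equivariant torsor structure descends so that $Y\times^G U_i$ is an $(E\times^G U_i)$-torsor over $X\times^G U_i$, apply the non-equivariant extended homotopy axiom levelwise, and pass to the inverse limit. The extra care you take with compatibility of the $p_i^*$ with the transition maps is a reasonable (if routine) addition to what the paper leaves implicit.
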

\begin{proof}
We have that
$E \times^G U_i \; \times_{X \times^G U_i} \; 
Y \times^G U_i$ is isomorphic to 
$\left( E \times U_i \; \times_{X \times U_i} 
Y \times U_i \right) / G$ for each $i$.
Therefore the action map $\Psi:E\times_{X}Y \to Y\times_{X}Y$ induces
an action map 
\[
  \Psi_i : E \times^G U_i \; \times_{X \times^G U_i} \; Y \times^G U_i  
\longrightarrow
  Y \times^G U_i \; \times_{X \times^G U_i} \; Y \times^G U_i
\]
and $\Psi_i$ is an isomorphism for
all $i$. 
This makes $Y \times^G U_i$ an $E \times^G U_i$-torsor over 
$X \times^G U_i$. The result then follows from the extended homotopy
property for algebraic cobordism. 
\end{proof}

\begin{corollary}[Homotopy Invariance]
Let $X$ be in $G-\mathbf{Sch}_k$. 
If $G$ acts linearly on $\mathbb{A}^n$ then  
$p^*_G : \Omega_*^G (X) \to
\Omega_{*+n}^G \left( \mathbb{A}^n \times X \right)$
is an isomorphism.
\end{corollary}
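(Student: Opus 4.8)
The plan is to reduce the corollary immediately to the Extended Homotopy Axiom (the preceding Proposition). The key point to observe is that a \emph{linear} action of $G$ on $\A^{n}$ makes the projection $\pi : \A^{n}\times X \to X$ into a $G$-equivariant vector bundle of rank $n$: the diagonal action $g\cdot(v,x) = (g\cdot v,\, g\cdot x)$ commutes with $\pi$, and on each fibre $\A^{n}\times\{x\}$ it is the linear isomorphism $v \mapsto g\cdot v$. One should also record that $\A^{n}\times X$ indeed lies in $G-\mathbf{Sch}_k$: it is separated, quasi-projective and of finite type since $X$ is, and it carries a $G$-linearizable ample line bundle. (This is in any case implicit in the statement, since otherwise $\Omega_{*}^{G}(\A^{n}\times X)$ would not be defined.)

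Next I would use the elementary fact that every vector bundle is a torsor over itself: fibrewise addition $E\times_{X}E \to E$ exhibits $p := \pi$ as an $E$-torsor over $X$, and since the $G$-action on $E = \A^{n}\times X$ is fibrewise linear, this torsor structure is $G$-equivariant and lives in $G-\mathbf{Sch}_k$. Thus we are precisely in the situation of the Extended Homotopy Axiom with ``$Y$'' taken to be $E = \A^{n}\times X$ and ``$p$'' the projection $\pi$.

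Applying that Proposition then yields that $p^{*}_{G} : \Omega_{*}^{G}(X) \to \Omega_{*+n}^{G}(\A^{n}\times X)$ is an isomorphism, which is exactly the assertion. There is no real obstacle here: all of the substance is already contained in the Extended Homotopy Axiom, and what remains is the bookkeeping of identifying $\A^{n}\times X \to X$ as a $G$-equivariant vector bundle, verifying that its self-torsor structure is equivariant, and confirming that every scheme in sight lies in $G-\mathbf{Sch}_k$.
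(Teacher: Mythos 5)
Your argument is exactly the intended one: the paper states this corollary without a separate proof because it follows immediately from the Extended Homotopy proposition by viewing $\A^{n}\times X$ with the diagonal action as a $G$-equivariant vector bundle over $X$ that is a torsor over itself. Your additional bookkeeping (equivariance of the torsor structure, membership in $G-\mathbf{Sch}_k$ via pulling back a linearized ample bundle along the affine projection) is correct and fills in precisely what the paper leaves implicit.
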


\subsection{Chern Classes of $G$-Equivariant Vector Bundles}

Let $E \to X$ be an $G$-equivariant vector bundle in $G-\mathbf{Sch}_k$.
We define the $n$-th \emph{$G$-equivariant Chern operator} 
$\tilde{{\rm c}}_n^G (E)$ as
\[
  \tilde{{\rm c}}^G_n (E) := \varprojlim_i \tilde{{\rm c}}_{n,i}(E),
\]
where $\tilde{{\rm c}}_{n,i} (E)$ is the n-th Chern operator of 
$E \times^G U_i \to X \times^G U_i$ induced by (PB).
If $X$ is smooth, given a $G$-equivariant vector bundle $E$ over $X$,
define the $n$-th \emph{$G$-equivariant Chern class} $\mathrm{c}_n^G
(E)$ in $\Omega_{-n}^G (X)$ as 
$\mathrm{c}_n^G (E):= \tilde{\mathrm{c}}_n^G (E) (1_X)$.
\begin{remark}
Restrict to smooth schemes. 
\begin{enumerate}
\item We could have defined $\mathrm{c}_n^G (E)$ by means of 
  ({\bf PB}) following Grothendieck's method \cite{gro-cla}, so that 
  $\sum_{n=0}^{r} (-1)^n {\rm c}_n^G (E)  \, \xi_G^{r-n} = 0$ 
  holds, where $r = \mathrm{rank} (E)$.

\item If $E = \oplus_{i=1}^r L_i$, with $L_i \to X$ a $G$-linear
  bundle, then ${\rm c}_n^G (E)$ is the $n$-th elementary symmetric
  polynomial at the ${\rm c}_1^G (L_i)$. 
\end{enumerate}
\end{remark}
Our equivariant Chern operators have the expected properties.
\begin{lemma}
Let $X$ and $Y$ be in $G-\mathbf{Sch}_k$. 
\begin{enumerate}
\item{\rm (Commutativity)} 
  Let $E$ and $F$ be $G$-vector bundles over $X$. 
  For any $i$ and $j$ we have that 
  $\, \tilde{{\rm c}}_i^G (E) \circ \tilde{{\rm c}}_j^G (F)
= \tilde{{\rm c}}_j^G (F) \circ \tilde{{\rm c}}_i^G (E)$.

\item {\rm (Naturality)} 
  For any l.c.i. morphism $f : Y \to X$ in $G-\mathbf{Sch}_k$
  and any $G$-equivariant vector bundle $E \to X$, we have that 
  $\,\tilde{{\rm c}}_n^G \circ f^* E = f^*_G \circ \tilde{{\rm c}}_n^G
  (E)$, for all $n \geq 0$.

\item {\rm (Whitney Formula)} 
  If $0 \to E^{\prime} \to E \to E^{\prime  \prime} \to 0$ is an
  $G$-equivariant exact sequence of $G$-vector bundles over $X$, then  
  $\tilde{{\rm c}}_n^G (E) 
= \sum_{l=0}^n \tilde{{\rm c}}_l^G \left( E^{\prime} \right) 
  \tilde{{\rm c}}_{n-l}^G \left( E^{\prime \prime} \right)$,
  for all $n \geq 0$.
\end{enumerate}
\end{lemma}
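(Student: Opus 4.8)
The plan is to deduce each of the three properties of the $G$-equivariant Chern operators from the corresponding property of the non-equivariant Chern operators of Levine--Morel by passing to the inverse limit over the good system $\{(V_i,U_i)\}$, exactly as we did for the variances and the projective bundle formula. The underlying principle is that $\tilde{\mathrm{c}}_n^G(E)$ is \emph{defined} as $\varprojlim_i \tilde{\mathrm{c}}_{n,i}(E)$, where $\tilde{\mathrm{c}}_{n,i}(E)$ is the ordinary Chern operator of the finite-dimensional approximation $E\times^G U_i \to X\times^G U_i$; so any identity among Chern operators that holds for $\Omega_*$ applied to each approximation, and which is compatible with the transition maps $\phi_{ij}^*$, will hold in the limit.

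First, for \emph{commutativity}, I would note that $E\times^G U_i$ and $F\times^G U_i$ are ordinary vector bundles over $X\times^G U_i$, so by the commutativity of Chern operators for $\Omega_*$ (a standard consequence of the projective bundle formula in Levine--Morel) we have $\tilde{\mathrm{c}}_{i,n}(E)\circ\tilde{\mathrm{c}}_{j,n}(F)=\tilde{\mathrm{c}}_{j,n}(F)\circ\tilde{\mathrm{c}}_{i,n}(E)$ as endomorphisms of $\Omega_*(X\times^G U_i)$ for every $i$ in the system. These identities are compatible with the transition maps $\phi_{ij}^*$ because the Chern operators are natural for the l.c.i.\ pull-backs $\phi_{ij}^*$ and because $(\phi_{ij})^*(E\times^G U_j)=E\times^G U_i$ (by descent, as in Section~\ref{sect-eclass}). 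Taking $\varprojlim_i$ gives the claim.

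Second, for \emph{naturality}: given an l.c.i.\ morphism $f:Y\to X$ in $G-\mathbf{Sch}_k$, it induces l.c.i.\ morphisms $f_i:Y\times^G U_i\to X\times^G U_i$ with $f_i^*(E\times^G U_i)\cong (f^*E)\times^G U_i$, again by descent. The non-equivariant naturality $\tilde{\mathrm{c}}_{n,i}(f_i^*E)=f_i^*\circ\tilde{\mathrm{c}}_{n,i}(E)$ holds at each level and is compatible with the transition maps (since the squares relating the $f_i$ for varying $i$ are transverse Cartesian, as in Section~\ref{sect-var}), so passing to the limit gives $\tilde{\mathrm{c}}_n^G(f^*E)=f^*_G\circ\tilde{\mathrm{c}}_n^G(E)$. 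The Whitney formula is handled the same way: a $G$-equivariant short exact sequence $0\to E'\to E\to E''\to 0$ restricts, after applying $(-)\times^G U_i$, to an ordinary short exact sequence of vector bundles over $X\times^G U_i$ (exactness is preserved by the flat base change along $X\times U_i\to X$ followed by the faithfully flat quotient $X\times U_i\to X\times^G U_i$, using faithfully flat descent for exactness as in Section~\ref{ffd-section}), whence the non-equivariant Whitney formula $\tilde{\mathrm{c}}_{n,i}(E)=\sum_{l=0}^n\tilde{\mathrm{c}}_{l,i}(E')\tilde{\mathrm{c}}_{n-l,i}(E'')$, and these are again compatible with $\phi_{ij}^*$, so the limit gives the equivariant statement.

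The one point requiring genuine care — the main (modest) obstacle — is the compatibility of these level-$i$ identities with the transition maps, i.e.\ checking that the diagrams of Chern operators and pull-backs $\phi_{ij}^*$ actually commute so that $\varprojlim_i$ makes sense and produces an identity of maps on $\Omega^G_*$. This reduces to two facts already in hand: that the approximations are compatible, $(\phi_{ij})^*(E\times^G U_j)=E\times^G U_i$ by descent, and that the ordinary Chern operators commute with l.c.i.\ pull-backs in transverse Cartesian squares (a Levine--Morel axiom). Granting these, every step is a routine passage to the inverse limit of an identity that already holds levelwise, so no new phenomenon arises in the equivariant setting.
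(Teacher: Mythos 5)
Your proposal is correct and is exactly the argument the paper intends: the lemma is stated without proof precisely because, as with Proposition \ref{epf}, each identity holds levelwise for the ordinary Chern operators of $E\times^G U_i\to X\times^G U_i$ (commutativity, naturality for l.c.i.\ pull-backs, and the Whitney formula in Levine--Morel), is compatible with the transition maps $\phi_{ij}^*$ by descent of the bundles and naturality of Chern operators, and so passes to the inverse limit. The only blemish is a notational slip in the commutativity paragraph where the Chern-degree and approximation indices get interchanged; the mathematics is unaffected.
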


\subsection{Restriction Maps}

In this section we relate $\Omega_*^G$ and $\Omega_*$ via restriction
maps, which can be defined via $\langle e \rangle \to G$ or by
restricting to the fiber. 
We show that both agree up to isomorphism on $\Omega_*$. 

\subsubsection{Restricting the Action}\label{rest-subgp}

Let $H \subseteq G$ be a closed normal subgroup of $G$. 
Since the induced action of $H$ on each $U_i$ is free and the quotient
$U_{i}/H = G/H\times^{G} U_{i}$ exists we see that $\{(V_{i},U_{i})\}$
is also a good system for $H$.  
Moreover, we have smooth morphisms $X \times^H U_i \to X \times^G
U_i$ that induces
\[
  {\rm res}_{G,H} : \Omega_*^G (X) \longrightarrow \Omega_*^H (X).
\]
When $H = \langle e \rangle$, we will use ${\rm res}_G$ to denote 
${\rm res}_{G,\langle e \rangle}$. 
From Example \ref{tact} we have the natural isomorphism 
$\Omega_* (X)\cong\Omega_*^{\langle e \rangle} (X)$ and so we obtain
${\rm res}_G : \Omega_*^G (X) \longrightarrow \Omega_* (X)$.

\subsubsection{Restriction to the Fiber}\label{rest-fib}

Assume $X \in G-\mathbf{Sch}_k$ to be irreducible. 
Let $\eta \in U/G$ be a rational point, where $U$ is the initial
$G$-invariant open in the system being considered.
For each $i$, the projection $X \times U_i \to U_i$ induces a flat
morphism $X \times^G U_i \to U_i / G$ whose fiber over a rational
point $\eta_i$ of $U_i / G$ equals $X$, where $\eta_i$ is the image of
$\eta$ under the canonical morphism $U/G \to U_i/G$.  
We have an induced morphism 
$\, {\rm res}^G_i (\eta): \Omega_* (X \times^G U_i) \to \Omega_* (X)$.
For any $i < j$ in the system 
we have an induced commutative diagram
\[
  \xymatrix{
           \Omega_*^G (X) \ar[d] \ar[dr] & & \\
	   \Omega_* (X \times^G U_j) \ar[r] \ar[rd] &
	   \Omega_* (X \times^G U_i) \ar[d] \\
	   & \Omega_*(X).
	  }
\]
Thus, given a rational point $\eta$ in $U/G$ and for any $i$ we have
the \emph{restriction map} 
\[
  {\rm res}_{\Omega}^G (\eta) : \Omega_*^G (X) 
\longrightarrow 
\Omega_* \left( X \times^G U_i \right) 
\stackrel{{\rm res}_i^G (\eta)}{\longrightarrow} 
\Omega_* (X).
\] 
If the group $G$ is clear from the context, we will use the notation
${\rm res}_{\Omega} (\eta)$.
When $G = \langle e \rangle$, the restriction 
${\rm res}^{\langle e \rangle}_{\Omega} (\eta)$ is 
precisely the isomorphism of Example \ref{tact}.

\subsubsection{Comparison of Restrictions}

Fix a rational point $\eta \in U/G$ as in the previous section.
Let $H$ be a normal closed subgroup of $G$. 
For every $U_i$ in the system, let $P_H(i)$ and $P_G(i)$ be points in
$U_i/H$ and $U_i/G$ respectively, so that $P_H(i) \mapsto P_G(i)$
under the canonical map $U_i/H \to U_i/G$, where $\eta \mapsto P_G(i)$
under $U/G \to U_i / G$. 
We have the commutative diagram 
\[
  \xymatrix{
            &&
            X \ar[r] \ar@{=}[dll] \ar[d] &
	    X \times^H U_i \ar[d] \ar[dll] \\
	    X \ar[r] \ar[d] &
	    X \times^G U_i \ar[d] &
	    P_H(i) \ar[dll] \ar[r] &
	    U_i/H \ar[lld] \\
	    P_G(i) \ar[r] &
	    U_i/G &
	    &
	   }
\]
with Cartesian faces induced by the fiber squares.

Hence we have a commutative diagram
\[
  \xymatrix{
            \Omega_*^G (X) \ar[r]^{{\rm res}_{G,H}} 
              \ar[d]_{{\rm res}_{\Omega}^G (\eta)} &
	    \Omega_*^H (X) \ar[d]^{{\rm res}_{\Omega}^H (\eta)} \\
	    \Omega_* (X) \ar[r]_{\mathrm{Id}} &
	    \Omega_* (X).
	   }
\]
For $H = \langle e \rangle$, we have seen that
${\rm res}^H_{\Omega} (\eta)$ is an isomorphism, so 
${\rm res}^G_{\Omega} (\eta)$ and ${\rm res}_G$ are equal up to
isomorphism on $\Omega_*(X)$. 
In particular, ${\rm res}^G_{\Omega} (\eta)$ is independent of
$\eta$. From now on, we will denote by ${\rm res}^G_{\Omega}$ the 
restriction to the fiber map.
With this notation, we have proved 
$\;{\rm res}_{\Omega}^G = {\rm res}_{\Omega}^{\langle e \rangle} 
\circ {\rm res}_G$.

\subsubsection{}

We have the following. 

\begin{theorem}[Induction]\label{ind}
Let $H$ be a closed normal subgroup of $G$. 
Consider $G$ as an $H$-scheme with the action $(h , g) \mapsto g
h^{-1}$. 
Let $X$ be in $G-\mathbf{Sch}_k$.  
Then 
\[
  \Omega_*^H (X) \cong \Omega_*^G (X \times^H G),
\]
where $X\times^{H} G$ is given a $G$-action via its action on $G$.
Moreover, if $X$ is obtained by restriction of a $G$-action then 
\[
  \Omega_*^H (X)\cong \Omega_*^G (X \times G/H).
\]
These isomorphisms are natural with respect to the
variances. 
\end{theorem}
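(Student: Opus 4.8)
The strategy is to reduce the induction isomorphism to a statement about the finite-dimensional approximations, where it becomes an isomorphism of ordinary algebraic cobordism groups induced by a map of schemes, and then pass to the inverse limit. The key geometric observation is the following: for a good system of representations $\{(V_i,U_i)\}$ for $G$, each $U_i$ is also a good system for the closed normal subgroup $H$ (as noted in $\S\ref{rest-subgp}$), since $H$ acts freely on $U_i$ and the quotient $U_i/H = G/H \times^G U_i$ exists in $\mathbf{Sch}_k$. The plan is to exhibit, for each $i$, a canonical isomorphism of schemes
\[
  X \times^H U_i \;\cong\; (X \times^H G) \times^G U_i
\]
and check that it is compatible with the transition maps $\phi_{ij}$ in the two inverse systems. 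Granting this, applying $\Omega_*$ and taking $\varprojlim_i$ yields $\Omega_*^H(X) \cong \Omega_*^G(X \times^H G)$ directly from Definition \ref{constreac}.

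First I would construct the scheme isomorphism. Both sides are quotients of $X \times G \times U_i$ by an action of $H \times G$ (or by $G$ after first quotienting by $H$, in a way that commutes). Concretely, on the left we form $(X \times U_i)/H$ where $H$ acts diagonally, then there is a residual $G$-action to be dealt with — but wait, that is not quite the setup. The cleaner route: consider $X \times G \times U_i$ with $H$ acting by $h\cdot(x,g,u) = (hx, gh^{-1}, hu)$ and $G$ acting by $g'\cdot(x,g,u) = (x, g'g, g'u)$ — no; one must be careful which factor carries which action. Let me instead take the map $X \times G \times U_i \to X \times U_i$, $(x,g,u)\mapsto (gx, gu)$, which is a principal $H$-bundle for the $H$-action $h\cdot(x,g,u)=(h^{-1}x,gh,u)$ making the quotient $X\times^H(G\times U_i)$, and then quotient the residual free $G$-action by $g'$. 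Identifying $(X\times^H G)\times^G U_i$ with $X\times^H(G\times U_i)/G$ and $X\times^H U_i$ with $(X\times U_i)/H$, the desired isomorphism follows from the commuting-quotients lemma: quotienting $X\times G\times U_i$ by the free $H\times G$-action can be performed in either order, and $(G\times U_i)/G \cong U_i$, $(X\times G\times U_i)/G \cong X\times U_i$ (via $(x,g,u)\mapsto(g^{-1}x, g^{-1}u)$ on the appropriate open locus). The existence of all quotients as quasi-projective schemes is guaranteed by Lemma \ref{des-pro} and the good-system axioms, together with the fact that $X\times^H G$ lies in $G-\mathbf{Sch}_k$.

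For the second statement, when $X$ arises by restriction of a $G$-action, the standard "shearing" isomorphism $X \times^H G \cong X \times G/H$ of $G$-schemes applies: the map $X \times G \to X \times G/H$, $(x,g)\mapsto(g^{-1}x, gH)$, is $H$-invariant and $G$-equivariant and identifies the quotient. Substituting into the first isomorphism gives $\Omega_*^H(X) \cong \Omega_*^G(X \times G/H)$. Finally, naturality with respect to pull-backs and push-forwards follows because every scheme isomorphism above is manifestly functorial in $X$ for equivariant l.c.i.- and projective morphisms: an equivariant morphism $f\colon X'\to X$ induces compatible morphisms on all the finite approximations, and the functoriality of $\Omega_*$ together with the naturality of $\varprojlim$ does the rest.

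The main obstacle, I expect, is purely bookkeeping: getting the several commuting free actions of $H$ and $G$ on $X\times G\times U_i$ set up with the correct sign/inverse conventions so that (a) all intermediate quotients genuinely exist in $\mathbf{Sch}_k$ (one must invoke Lemma \ref{des-pro}(1) to descend the $G$-linearizable ample bundle through each stage, and check $X\times^H G \in G-\mathbf{Sch}_k$), and (b) the isomorphisms are compatible with the inclusions $U_i\subseteq U_{i+1}$ so that they assemble into an isomorphism of pro-objects rather than merely an abstract isomorphism for each $i$. There is no real analytic or homological difficulty; the content is entirely in the elementary theory of quotients by free actions of linear algebraic groups.
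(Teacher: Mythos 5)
Your plan is essentially the paper's proof: restrict the good system of representations for $G$ to $H$, identify $(X\times^{H}G)\times^{G}U_{i}$ with $X\times^{H}U_{i}$ compatibly with the transition maps, pass to the inverse limit, and use the shearing isomorphism $X\times^{H}G\cong X\times G/H$ for the second statement. The paper simply writes down the explicit maps $([x,g],u)\mapsto(x,g^{-1}u)$ and $[x,a]\mapsto(ax,aH)$, which is what your commuting-quotients bookkeeping yields once the inverse conventions are fixed (note that your intermediate map $(x,g,u)\mapsto(gx,gu)$ is not $H$-invariant as written, but this is exactly the routine adjustment you flagged).
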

\begin{proof}
Let $\{V_{i}, U_{i}\}$ be a good system of representations for $G$. By
restricting the action this provides a good system of representations
for $H$ as well. The first statement follows from the isomorphism 
$(X\times^{H} G)\times^{G} U_{i} \to X\times^{H} U_{i}$,
given by $([x,g], u) \mapsto (x, g^{-1}u)$.

If $X$ is an $H$-scheme obtained by restricting a $G$-action then 
$X \times^H G \to X \times G/H$, given by $[x,a] \mapsto (ax, aH)$, is
an isomorphism of $G$-schemes.
The second statement now follows from the first. 
\end{proof}

\subsection{Geometric Quotients}

In this section we assume the geometric quotient $p : X \to X/G$
exists.  
We compare the ordinary cobordism of $X/G$ and the equivariant
algebraic cobordism $X$. 
As a consequence we see that the ordinary cobordism with rational
coefficients of $X/G$, for smooth $X$, is equipped with a natural
ring structure.

The fiber of $\pi_i : X \times^G U_i \to X/G$ over $x \in X/G$ is given
by $\pi_{i}^{-1}(x) = U_i/G_x$. 
Since $U_i/G_x$ is smooth, by \cite[Corollary 6.3.24]{Liu:AG} we have
that each $\pi_i$ is an l.c.i. morphism. 
The morphisms 
$\pi_i^* : \Omega_k (X/G) \to \Omega_{k + \dim U_i} (X \times^G U_i)$
induce
\[
  \pi^* : \Omega_* (X/G) \longrightarrow \Omega^G_{* + \dim G} (X).
\]

\begin{proposition}\label{trivstab}
Let $X$ be in $G-\mathbf{Sch}_k$. 
Assume that $X \to X/G$ is a principal $G$-bundle.
Then $\pi^* : \Omega_*(X/G) \to \Omega^G_{*+\dim G} (X)$ is an
isomorphism.
\end{proposition}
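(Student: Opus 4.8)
The plan is to show that when $X \to X/G$ is a principal $G$-bundle, each approximation map $\pi_i : X \times^G U_i \to X/G$ exhibits $X \times^G U_i$ as an affine-bundle-like object over $X/G$, so that $\pi_i^*$ is an isomorphism after shifting degrees, and then pass to the limit. The key geometric observation is that since $X \to X/G$ is a principal $G$-bundle, the associated fiber bundle $X \times^G U_i \to X/G$ has fiber $U_i$, which is a $G$-invariant open subset of the representation $V_i$ with complement of large codimension. More precisely, I would first form the associated $V_i$-bundle $X \times^G V_i \to X/G$; since $X \to X/G$ is a principal $G$-bundle and $V_i$ is a linear representation, this is an honest vector bundle over $X/G$ of rank $\dim V_i$. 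Then $X \times^G U_i$ sits inside $X \times^G V_i$ as the open complement of $X \times^G (V_i - U_i)$, which is a closed subset whose codimension equals $\mathrm{codim}_{V_i}(V_i - U_i) = c(i)$.

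\textbf{Main steps.} First I would establish the vector bundle claim: that $\mathrm{pr}: X \times^G V_i \to X/G$ is a vector bundle, using faithfully flat descent along $X \to X/G$ (which is faithfully flat since it is a principal $G$-bundle) — étale-locally on $X/G$ the bundle $X \to X/G$ trivializes, reducing to the projection $U \times V_i \to U$. Next, I would invoke Proposition \ref{prolim}(2): there is an integer $n(X/G)$, depending only on $X/G$, such that whenever $c(i) = \mathrm{codim}_{V_i}(V_i - U_i) > n(X/G)$, the pullback
\[
\mathrm{pr}^*_{|X \times^G U_i}: \Omega_k(X/G) \to \Omega_{k + \dim V_i}(X \times^G U_i)
\]
is an isomorphism for all $k$. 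By condition (5) in the definition of a good system of representations the codimensions $c(i)$ are strictly increasing, so this holds for all sufficiently large $i$; combined with the (EH) isomorphism $\Omega_*(X/G) \xrightarrow{\sim} \Omega_{*+\dim V_i}(X \times^G V_i)$ and the localization sequence, this identifies $\Omega_k(X/G)$ with $\Omega_{k+\dim V_i}(X \times^G U_i)$ compatibly. Finally, taking the inverse limit over $i$ (a cofinal tail of which consists of isomorphisms) and tracking the degree shift $\dim V_i$ against $\dim U_i = \dim V_i$ and the convention $\Omega^G_n(X) = \varprojlim_i \Omega_{n - \dim G + \dim U_i}(X \times^G U_i)$, I obtain $\pi^*: \Omega_*(X/G) \xrightarrow{\sim} \Omega^G_{*+\dim G}(X)$.

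\textbf{Anticipated obstacle.} The main subtlety will be verifying the vector bundle claim for $X \times^G V_i \to X/G$ and, relatedly, checking that the composite $\Omega_*(X/G) \to \Omega_{*+\dim V_i}(X \times^G V_i) \to \Omega_{*+\dim V_i}(X\times^G U_i)$ really agrees with $\pi_i^*$ up to the degree bookkeeping — in particular that the open immersion $X \times^G U_i \hookrightarrow X \times^G V_i$ is compatible with the structure maps to $X/G$ and that the transition maps $\phi_{ij}$ in the system are compatible with the inclusions $V_i \hookrightarrow V_{i+1}$ coming from condition (2). One must also confirm that Proposition \ref{prolim} applies: the scheme $X/G$ is in $\mathbf{Sch}_k$ by hypothesis (the geometric quotient exists as a scheme), and the bound $n(X/G)$ is indeed independent of $i$, which is exactly what part (2) of that proposition provides. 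Once these compatibilities are in hand, the passage to the limit is formal, since the Mittag--Leffler condition holds (Lemma \ref{ML}) and a cofinal subsystem of the maps are isomorphisms.
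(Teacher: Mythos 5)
Your proposal is correct and follows essentially the same route as the paper: observe that (by triviality of stabilizers/descent along the principal bundle) $X \times^G V_i \to X/G$ is a vector bundle, apply Proposition \ref{prolim}(2) to the open $X \times^G U_i$ whose closed complement has codimension $c(i) \to \infty$, and pass to the limit. The paper's proof is just a terser version of this, so no further comparison is needed.
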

\begin{proof}
Since the stabilizers are trivial, $X \times^G V_i \to X/G$ is a vector
bundle for every representation $V_i$.  
By Proposition \ref{prolim} there is an integer $N$ such that
$\pi_{|U_j}^* : \Omega_{*} (X/G) \to \Omega_{*+\dim U_{j}}(X \times^G U_j)$ 
is an isomorphism for all $j > N$. 
\end{proof}

\begin{theorem}\label{propact}
Let $X$ be in $G-\mathbf{Sch}_k$ with proper $G$-action.  
Then $\pi^{*}:
\Omega_{*}(X/G)_{\mathbb{Q}} \to \Omega_{*+\dim G}^{G}(X)_{\mathbb{Q}}$
is an isomorphism.
\end{theorem}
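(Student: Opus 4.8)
The plan is to prove the theorem by d\'evissage to the free case handled by Proposition~\ref{trivstab}, interpolating between $X$ and a free model along a finite \'etale cover that trivializes the stabilizers; rational coefficients will enter only to invert the degree of that cover.

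\emph{Reduction to a dense open.} First I would record that $\pi^{*}$ is compatible with localization: if $\imath\colon Z\hookrightarrow X$ is an invariant closed subscheme with open complement $j\colon U\hookrightarrow X$, then $Z/G\hookrightarrow X/G$ is closed with complement $U/G$, and the maps $X\times^{G}U_{i}\to X/G$ sit in transverse Cartesian squares with $\imath$, $j$ and the transition maps $\phi_{ij}$, so by the variance compatibilities of Proposition~\ref{epf} the ladder
\begin{equation*}
\xymatrix@C=1.3pc{
\Omega_{*}(Z/G)_{\Q} \ar[r]\ar[d]_{\pi_{Z}^{*}} & \Omega_{*}(X/G)_{\Q} \ar[r]\ar[d]_{\pi^{*}} & \Omega_{*}(U/G)_{\Q} \ar[r]\ar[d]_{\pi_{U}^{*}} & 0 \\
\Omega^{G}_{*+\dim G}(Z)_{\Q} \ar[r] & \Omega^{G}_{*+\dim G}(X)_{\Q} \ar[r] & \Omega^{G}_{*+\dim G}(U)_{\Q} \ar[r] & 0
}
\end{equation*}
commutes with exact rows (the bottom from \S\ref{sect-ls}, still exact after $-\otimes\Q$ by Lemma~\ref{ML}). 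By Noetherian induction on $\dim X$ and the five lemma it then suffices to prove the theorem after replacing $X$ by any dense $G$-invariant open; grouping the irreducible components of $X$ into $G$-orbits and using Theorem~\ref{ind} one further reduces to $X$ irreducible.

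\emph{The generic free model and transfer.} Properness makes each stabilizer $G_{x}$ finite---it is proper over $k(x)$ and closed in the affine group $G$---hence \'etale, since $\mathrm{char}\,k=0$. After shrinking $X$ the stabilizer group scheme is finite \'etale of a constant rank $d$, and the key geometric input is that (after a further shrinking) there is a $G$-equivariant finite \'etale $p\colon\widetilde X\to X$ in $G-\mathbf{Sch}_{k}$ on which $G$ acts \emph{freely} (a slice-type description of proper actions in characteristic zero; cf.~\cite{edd&gra-eit}). Then $\widetilde X/G$ exists, $\widetilde X\to\widetilde X/G$ is a principal $G$-bundle by \cite{mfk-git}, so $\pi^{*}_{\widetilde X}\colon\Omega_{*}(\widetilde X/G)\to\Omega^{G}_{*+\dim G}(\widetilde X)$ is an isomorphism by Proposition~\ref{trivstab}; moreover $\bar p\colon\widetilde X/G\to X/G$ is finite \'etale of degree $d$ and the squares relating $\widetilde X\times^{G}U_{i}$, $X\times^{G}U_{i}$, $\widetilde X/G$, $X/G$ are transverse Cartesian. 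Hence $p^{G}_{*}p^{*}_{G}=d=\bar p_{*}\bar p^{*}$, while $\pi^{*}_{\widetilde X}\circ\bar p^{*}=p^{*}_{G}\circ\pi^{*}_{X}$ and $\pi^{*}_{X}\circ\bar p_{*}=p^{G}_{*}\circ\pi^{*}_{\widetilde X}$ (functoriality of pull-back and its compatibility with push-forward in the Cartesian square). A transfer argument then finishes: if $\pi^{*}_{X}\alpha=0$ then $\pi^{*}_{\widetilde X}\bar p^{*}\alpha=0$, so $\bar p^{*}\alpha=0$ and $d\alpha=\bar p_{*}\bar p^{*}\alpha=0$; and writing $p^{*}_{G}\beta=\pi^{*}_{\widetilde X}\tilde\alpha$ gives $\beta=\tfrac{1}{d}\,p^{G}_{*}\pi^{*}_{\widetilde X}\tilde\alpha=\pi^{*}_{X}\bigl(\tfrac{1}{d}\,\bar p_{*}\tilde\alpha\bigr)$, so $\pi^{*}_{X}$ is an isomorphism after inverting $d$, which completes the induction.

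The hard part will be the geometric input above: for a proper action of a general, not necessarily reductive, linear algebraic group in characteristic zero, producing a $G$-equivariant finite \'etale cover with free action and arranging that $\widetilde X$ and all the quotients remain in $\mathbf{Sch}_{k}$ and $G-\mathbf{Sch}_{k}$ (so the linearizable ample bundles persist). This is also precisely where rational coefficients are forced, since the transfer projectors exist only after inverting $d$. An alternative, avoiding slice theory, would be to pass the entire statement through the rational identification $\Omega_{*}(-)_{\Q}\iso\mathrm{CH}_{*}(-)\otimes_{\Z}\Q[\mathbf{t}]$ of Example~\ref{exa-tt} and reduce to the Chow-group case, which is the theorem of Edidin--Graham \cite{edd&gra-eit}; the work is then to commute the defining inverse limits with the (twisted) comparison, using that in each fixed degree only finitely many Chow groups occur and stabilize.
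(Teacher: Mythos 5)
Your route is genuinely different from the paper's, and as it stands it has gaps at its two load-bearing points. The paper does not shrink $X$ or use an \'etale cover at all: it invokes Edidin--Graham's Proposition 10 to get, \emph{globally}, a finite \emph{surjective} (not \'etale) equivariant $X'\to X$ with $G$ acting freely on $X'$, sitting over a finite surjective $Y'\to Y=X/G$; it then proves a Kimura-type rational descent sequence for finite surjective maps (Proposition \ref{exact}, via the identification of Example \ref{exa-tt}) both for $\Omega_*$ and for $\Omega_*^G$, and compares the two right-exact rows, applying Proposition \ref{trivstab} to $X'$ and to $X'\times_X X'$ (both carry free actions); the diagram chase with the two \emph{left} vertical maps isomorphisms gives the theorem. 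Your key geometric input --- a generic $G$-equivariant finite \emph{\'etale} cover with free action, staying in $G-\mathbf{Sch}_k$ --- is exactly what you leave unproved, and the cited Edidin--Graham result does not supply it; so the proposal is incomplete precisely at its crux. (Your fallback of passing through $\mathrm{CH}\otimes\Q[\mathbf{t}]$ is closer to what the paper actually does, but it is not carried out either.)

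Even granting the \'etale free cover, two steps fail. First, the squares relating $\widetilde X\times^G U_i$, $X\times^G U_i$, $\widetilde X/G$, $X/G$ are \emph{not} Cartesian when stabilizers are nontrivial: the fiber of $\widetilde X\times^G U_i\to\widetilde X/G$ is $U_i$, while the fiber of $X\times^G U_i\to X/G$ is $U_i/G_x$. Hence the intertwining identity $\pi^{*}_{X}\circ\bar p_{*}=p^{G}_{*}\circ\pi^{*}_{\widetilde X}$, on which your surjectivity transfer rests, is false: already for $X=\operatorname{Spec}k$ with a nontrivial finite group acting trivially (so $\widetilde X=G$, $\bar p=\mathrm{id}$), the left side sends $1$ to $[U_i/G]$ while the right side gives the class $[U_i\to U_i/G]$, which even in Chow groups is $|G|\cdot[U_i/G]$; relatedly $\deg\bar p=\deg p/|G_x|$, so the two transfer constants do not even match. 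Second, your Noetherian-induction step cannot deliver injectivity: the localization rows are only right exact, and in the chase one gets stuck at $\imath^{G}_{*}\pi_{Z}^{*}(z)=0$ with $\imath^{G}_{*}$ not injective, so isomorphisms on the outer ($Z$ and $U$) columns do not force the middle one. This is exactly why the paper arranges its d\'evissage the other way around, with the isomorphisms (Proposition \ref{trivstab}) on the two left columns of the Kimura sequences, where the right-exact comparison does yield both surjectivity and injectivity of $\pi^{*}$.
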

This theorem implies that we have a ring structure on the cobordism
of a large class of interesting singular schemes. 
\begin{corollary}
Let $X$ be in $G-\mathbf{Sm}_k$.
Under the assumptions of the theorem, $\Omega^* (X/G)_{\mathbb{Q}}$ has
a ring structure.
\end{corollary}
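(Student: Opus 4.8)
The plan is to build the ring structure on $\Omega^{*}_{G}(X)_{\Q}$ directly and then carry it to $\Omega^{*}(X/G)_{\Q}$ along the isomorphism of Theorem~\ref{propact}. The input is that the hypothesis $X\in G-\mathbf{Sm}_k$ makes every approximation $X\times^{G}U_{i}$ a smooth $k$-scheme (by descent, as noted after Definition~\ref{constreac}), so the correspondence of $\S$\ref{OBM&OCT} equips $\Omega_{*}(X\times^{G}U_{i})$ with the structure of a commutative graded ring $\Omega^{*}(X\times^{G}U_{i})$, with product $a\cup b=\delta^{*}(a\times b)$ along the diagonal and unit pulled back from $\mathrm{Spec}\,k$.

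Next I would check that the tower $\{\Omega^{*}(X\times^{G}U_{i})\}_{i}$ is a tower of graded ring homomorphisms. The transition maps $\phi_{ij}\colon X\times^{G}U_{i}\to X\times^{G}U_{j}$ are morphisms of smooth $k$-schemes, so under the equivalence of $\S$\ref{OBM&OCT} their l.c.i.\ pull-backs become the contravariant ring maps $\phi_{ij}^{*}\colon\Omega^{*}(X\times^{G}U_{j})\to\Omega^{*}(X\times^{G}U_{i})$ of the associated oriented cohomology theory; one verifies that the relative dimension $\dim U_{i}-\dim U_{j}$ of $\phi_{ij}$ is exactly what matches the regradings $\Omega^{n}(X\times^{G}U_{i})=\Omega_{\dim X+\dim U_{i}-\dim G-n}(X\times^{G}U_{i})$. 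Hence $\Omega^{*}_{G}(X)=\varprojlim_{i}\Omega^{*}(X\times^{G}U_{i})$ is an inverse limit of graded rings along ring homomorphisms, so it is itself a graded ring; applying $(-)\otimes\Q$ levelwise and passing to the limit gives the graded $\Q$-algebra $\Omega^{*}_{G}(X)_{\Q}$.

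Finally, Theorem~\ref{propact} provides a group isomorphism $\pi^{*}\colon\Omega_{*}(X/G)_{\Q}\to\Omega^{G}_{*+\dim G}(X)_{\Q}$; under the indexing convention $\Omega^{n}(Y):=\Omega_{\dim Y-n}(Y)$ (with $\dim(X/G)=\dim X-\dim G$) the target is precisely $\Omega^{*}_{G}(X)_{\Q}$, so $\pi^{*}$ becomes a graded isomorphism $\Omega^{*}(X/G)_{\Q}\xrightarrow{\sim}\Omega^{*}_{G}(X)_{\Q}$. Defining the product on $\Omega^{*}(X/G)_{\Q}$ by transport of structure, $a\cdot b:=(\pi^{*})^{-1}\!\big(\pi^{*}(a)\cup\pi^{*}(b)\big)$ with unit $(\pi^{*})^{-1}(1)$, yields the asserted ring structure, and it is canonical because both the product on $\Omega^{*}_{G}(X)_{\Q}$ and the map $\pi^{*}$ are independent of the chosen good system (by the discussion at the start of $\S$\ref{secfour}). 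There is no serious obstacle here---the whole content sits in Theorem~\ref{propact}---and the only points that require care are confirming that the transition maps in the defining tower are genuinely ring homomorphisms, so that the inverse limit is a ring and not merely an abelian group, and pinning down the meaning of $\Omega^{*}$ for the possibly singular quotient $X/G$ through the reindexing convention.
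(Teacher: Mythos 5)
Your argument is correct and is exactly the route the paper intends: since $X$ is smooth each approximation $X\times^{G}U_{i}$ is smooth, so $\Omega^{*}_{G}(X)_{\Q}$ is an inverse limit of graded rings along (l.c.i.) pull-back ring homomorphisms, and the ring structure is transported to $\Omega^{*}(X/G)_{\Q}$ through the isomorphism $\pi^{*}$ of Theorem~\ref{propact}. The paper leaves this proof implicit, and your care about the transition maps being ring maps and about the reindexing for the possibly singular quotient matches the intended (unwritten) details.
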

\begin{proof}[Proof of Theorem \ref{propact}]
Write $g=\dim G$ and write $Y=X/G$ for the quotient. 
We proceed in a similar fashion as in 
\cite[Theorem 3(a)]{edd&gra-eit}. 
By \cite[Proposition 10]{edd&gra-eit}, we have a commutative diagram
\begin{equation*}
  \begin{CD} 
    X^{\prime} @>{\tilde{f}}>> X \\
    @V{p^{\prime}}VV @V{p}VV \\
    Y^{\prime} @>{f}>> Y,
  \end{CD}
\end{equation*}
where $X^{\prime} \to Y^{\prime}$ is a principal $G$-bundle and
both $X^{\prime}\to X$ and $Y^{\prime}\to Y$ are finite and
surjective.

We obtain from the exact sequences from Proposition \ref{exact} proved
below,   
\begin{gather*}
  \Omega_*^G(X^{\prime}\times_XX^{\prime})_{\mathbb{Q}}
  \xrightarrow{pr_{1 G} - pr_{2 G}} 
  \Omega_*^G(X^{\prime})_{\mathbb{Q}} \xrightarrow{\tilde{f}_G}
  \Omega_*^G(X)_{\mathbb{Q}} \to 0, 
\\  
  \Omega_*(Y^{\prime}\times_Y Y^{\prime})_{\mathbb{Q}}
    \xrightarrow{pr_{1*} - pr_{2*}} 
  \Omega_*(Y^{\prime})_{\mathbb{Q}} \xrightarrow{f_*}
  \Omega_*(Y)_{\mathbb{Q}} \to 0.  
\end{gather*}
Write $Y^{\prime \prime} = \left( X^{\prime} \times_X X^{\prime}
\right) /G$, we have then that
$\Omega_* \left( Y^{\prime \prime} \right) \to \Omega_* \left(
Y^{\prime} \times_Y Y^{\prime} \right)$ 
is a surjection since $Y'' \to Y'\times_YY'$ is a finite and
surjective morphism. 
We obtain the comparison of exact sequences, 
\begin{equation*}
\begin{CD}
  \Omega_* \left( Y ^{\prime \prime} \right)_{\mathbb{Q}} 
    @>{pr_{1*} - pr_{2*}}>>
  \Omega_* \left( Y^{\prime} \right)_{\mathbb{Q}} @>{f_*}>> 
  \Omega_*(Y)_{\mathbb{Q}} @>>> 0 \\ 
        @VV{\pi^{\prime \prime *}}V         @VV{\pi^{^{\prime}*}}V
        @VV{\pi^*}V @. \\ 
  \Omega_{*+g}^G \left( X^{\prime} \times_X X^{\prime}
    \right)_{\mathbb{Q}} @>{pr_{1 G} - pr_{2 G}}>>
  \Omega_{*+g}^G \left( X^{\prime} \right)_{\mathbb{Q}} @>{\tilde{f}_G}>>
    \Omega_{*+g}^G(X)_{\mathbb{Q}} @>>> 0.  
\end{CD}
\end{equation*}
By Proposition \ref{trivstab} the two maps on the left are
isomorphisms, so the third map too is.
\end{proof}

\begin{proposition}\label{exact}
Let $\pi:X^{\prime} \to X$ be a finite surjective map.
\begin{enumerate}
\item[(a)] The sequence
  $\; \Omega_* \left( X^{\prime} \times_X X^{\prime} \right)_{\mathbb{Q}} 
  \xrightarrow{p_{1*} - p_{2*}} 
  \Omega_* \left( X^{\prime} \right)_{\mathbb{Q}} 
  \xrightarrow{\pi_*} 
  \Omega_*(X)_{\mathbb{Q}} 
  \to 
  0 \;$
  is exact. 

\item[(b)] Suppose that both $X^{\prime}$ and $X$ are $G$-schemes and
  $\pi$ is $G$-equivariant. Then the sequence 
  $\; \Omega_*^G \left( X^{\prime} \times_X X^{\prime} \right)_{\mathbb{Q}}
  \xrightarrow{p^G_{1*} - p^G_{2*}} 
  \Omega_*^G \left( X^{\prime} \right)_{\mathbb{Q}} 
  \xrightarrow{\pi^G_*}
  \Omega_*^G(X)_{\mathbb{Q}} 
  \to 
  0 \;$  
  is exact.
\end{enumerate}
\end{proposition}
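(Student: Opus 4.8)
The plan for (a) is to reduce it to the corresponding, essentially classical, statement for Chow groups. By Example~\ref{exa-tt} the canonical map $\Theta_{\exp}\colon(\Omega_*)_{\Q}\xrightarrow{\ \cong\ }\mathrm{CH}_*\otimes_{\Z}\Q[\mathbf{t}]^{(\mathbf{t})}$ is an isomorphism of oriented Borel--Moore homology theories. Such a morphism commutes with push-forward along projective morphisms, and $\pi$, $p_1$, $p_2$ are finite and hence projective; since moreover twisting an oriented Borel--Moore theory leaves the push-forwards unchanged and $\mathrm{CH}_*(Z)\otimes_{\Z}\Q[\mathbf{t}]\cong\mathrm{CH}_*(Z)_{\Q}\otimes_{\Q}\Q[\mathbf{t}]$, the map $\Theta_{\exp}$ identifies the sequence in~(a) with the sequence
\[
  \mathrm{CH}_*(X'\times_X X')_{\Q}\xrightarrow{\ p_{1*}-p_{2*}\ }\mathrm{CH}_*(X')_{\Q}\xrightarrow{\ \pi_*\ }\mathrm{CH}_*(X)_{\Q}\longrightarrow 0
\]
tensored over $\Q$ with $\Q[\mathbf{t}]$. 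As $\Q[\mathbf{t}]$ is flat over $\Q$, it therefore suffices to prove that this rational Chow sequence is exact. (One can alternatively run the argument below directly in $\Omega_*$, replacing the Chow computations by resolution of singularities together with the generalized degree formula.)

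For the rational Chow sequence, $\pi_*\circ(p_{1*}-p_{2*})=0$ because $\pi p_1=\pi p_2$, and $\pi_*$ is surjective by the standard degree argument: $\mathrm{CH}_*(X)_{\Q}$ is generated by the classes $[V]$ of integral closed subschemes $V\subseteq X$, and for an integral component $V'$ of $\pi^{-1}(V)$ dominating $V$, of degree $e$ over $V$, one has $[V]=e^{-1}\pi_*[V']$. The essential point is exactness at the middle term. When $\pi$ is finite and \emph{flat} of degree $d$, then so are $p_1$ and $p_2$, and flat base change in the square $X'\times_X X'\rightrightarrows X'\to X$ gives $\pi^*\pi_*=p_{2*}\,p_1^*$; hence for $\alpha\in\ker\pi_*$ one finds $(p_{1*}-p_{2*})(p_1^*\alpha)=d\alpha-\pi^*\pi_*\alpha=d\alpha$, so that $\alpha\in\mathrm{im}(p_{1*}-p_{2*})_{\Q}$. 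The general finite surjective case is obtained by Noetherian induction on $X$: one chooses a dense open $U\subseteq X$ over which $\pi$ is finite flat, with closed complement $Z$, compares the three localization sequences attached to $X'\times_X X'$, to $X'$ and to $X$, and feeds in the flat case over $U$ together with the inductive hypothesis over $Z$. This last step is the descent property of rational Chow groups along finite (or, more generally, proper) surjective morphisms, as used e.g. in the proof of \cite[Theorem~3]{edd&gra-eit}; I expect it to be the one genuinely nontrivial ingredient, the rest of~(a) being formal.

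For~(b) I would argue one level at a time. For each $i$ the morphism $\pi_i\colon X'\times^G U_i\to X\times^G U_i$ is finite and surjective by faithfully flat descent, forming $(-)\times^G U_i$ commutes with the fibre product over $X$ (by descent along the torsor $X\times U_i\to X\times^G U_i$), so that $(X'\times^G U_i)\times_{X\times^G U_i}(X'\times^G U_i)\cong(X'\times_X X')\times^G U_i$, and $X'\times_X X'$ again lies in $G-\mathbf{Sch}_k$, being closed in $X'\times X'$. Applying part~(a) to $\pi_i$ gives, for each $i$, an exact sequence
\[
  \Omega_*\bigl((X'\times_X X')\times^G U_i\bigr)_{\Q}\xrightarrow{\ p_{1*}^i-p_{2*}^i\ }\Omega_*(X'\times^G U_i)_{\Q}\xrightarrow{\ \pi_{i*}\ }\Omega_*(X\times^G U_i)_{\Q}\longrightarrow 0,
\]
and by the compatibility of push-forwards with pull-backs in the transverse Cartesian squares relating consecutive levels (as in \S\ref{sect-var}) these assemble into a directed system of exact sequences whose inverse limit is precisely the sequence in~(b). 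It remains to see that $\varprojlim_i$ preserves exactness: by Lemma~\ref{ML} all transition maps in the towers $\{\Omega_*(\cdot\times^G U_i)_{\Q}\}$ are surjective, so these towers---together with the towers of images and of kernels occurring in the sequences above---satisfy the Mittag--Leffler condition, all relevant $\varprojlim^{1}$-terms vanish, and the inverse limit of the level-$i$ exact sequences is again exact. Granting the Chow descent statement of the second paragraph, the reduction in~(a) and this Mittag--Leffler bookkeeping in~(b) are both routine; the sole real obstacle is the non-flat case of that Chow statement.
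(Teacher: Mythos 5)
Your proposal follows the same route as the paper's proof: part (a) is reduced, via the isomorphism $\Theta_{\exp}$ of Example \ref{exa-tt} (twisting does not change push-forwards, and $\Q[\mathbf{t}]$ is flat over $\Q$), to exactness of the rational Chow sequence, and part (b) is obtained by applying (a) levelwise to $\pi_i:X'\times^G U_i\to X\times^G U_i$ (finite surjective by descent, with $(X'\times^G U_i)\times_{X\times^G U_i}(X'\times^G U_i)\cong (X'\times_X X')\times^G U_i$) and passing to the inverse limit using Lemma \ref{ML}; this is exactly what the paper does. The one divergence is the Chow input itself: the paper simply cites \cite[Proposition 1.8]{kim-fi&bt}, whereas you sketch a proof. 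Your finite flat case is correct, but the Noetherian induction you outline for the general finite surjective case does not close as stated: after correcting $\alpha\in\ker\pi_*$ by $(p_{1*}-p_{2*})\beta$ using the flat case over $U$ and surjectivity of restriction in the localization sequence, you are left with $\alpha-(p_{1*}-p_{2*})\beta=i'_*\gamma$ for some $\gamma\in\mathrm{CH}_*(\pi^{-1}Z)_{\Q}$ satisfying only $i_*\pi_{Z*}\gamma=0$; since $i_*:\mathrm{CH}_*(Z)_{\Q}\to\mathrm{CH}_*(X)_{\Q}$ need not be injective, you cannot conclude $\pi_{Z*}\gamma=0$, so the inductive hypothesis over $Z$ cannot be applied directly. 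Kimura's argument is precisely designed to get around this point, which is why both the paper and \cite{edd&gra-eit} cite it rather than reprove it; since you also explicitly invoke this descent property as known, your write-up is fine once the sketch is replaced by (or completed to) that citation. The remaining steps, including the Mittag--Leffler bookkeeping in (b), match the paper's proof.
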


\begin{proof}
\begin{enumerate}
\item[(a)] By \cite[Proposition 1.8]{kim-fi&bt} the sequence
  \begin{equation*}
    {\rm CH}_*(X^{\prime} \times_X X^{\prime})_{\mathbb{Q}}
    \xrightarrow{p_{1*} - p_{2*}} 
    { \rm CH}_*(X^{\prime})_{\mathbb{Q}} \xrightarrow{\pi_*}
    {\rm CH}_*(X)_{\mathbb{Q}} \to 0,  
  \end{equation*}
  is exact. It follows that the sequence 
  \begin{equation*}
    {\rm CH}_*(X^{\prime} \times_X
    X^{\prime})[\mathbf{t}]^{(\mathbf{t})}_{\mathbb{Q}}
    \xrightarrow{p_{1*} - p_{2*}} {\rm
    CH}_*(X^{\prime})[\mathbf{t}]^{(\mathbf{t})}_{\mathbb{Q}}
    \xrightarrow{\pi_*} {\rm CH}
    _*(X)[\mathbf{t}]^{(\mathbf{t})}_{\mathbb{Q}} \to 0   
  \end{equation*}
  is also exact. 
  The statement follows from Example \ref{exa-tt}. 
  
\item[(b)]  The morphism
  $X^{\prime}\times^{G}U_i \to X\times^{G} U_i$ 
  is finite and surjective by faithfully flat descent, and 
  $\left( X^{\prime}\times^{G} U_i \right) \times_{X\times^{G} U_i}
  \left( X^{\prime} \times^{G} U_i \right)   
  = (X^{\prime}\times_XX^{\prime})\times^{G}U_i$. 
  Therefore 
  \begin{equation*}\label{ses-properaction}
    \Omega_* \left( \left( X^{\prime} \times_X X^{\prime} \right)
    \times^{G}U_i \right)_{\mathbb{Q}} 
    \xrightarrow{p_{1*} - p_{2*}} 
    \Omega_* \left( X^{\prime} \times^{G} U_i \right)_{\mathbb{Q}}
    \xrightarrow{\pi_*} 
    \Omega_* \left( X \times^{G} U_i \right)_{\mathbb{Q}} 
    \to 
    0
  \end{equation*}
  is an exact sequence for each $i$. 
  By Lemma \ref{ML} the system  
  $\; \Omega_* \left( \underbar{\;\;} \times^G U_i
  \right)_{\mathbb{Q}}$ 
  satisfies the Mittag-Leffler condition. 
  Hence the sequence remains exact upon taking inverse limits.
\end{enumerate}
\end{proof}

\subsection{Reduction to a Torus Action}
For the rest of the section $G$ will be a connected reductive linear
algebraic group containing a split
maximal torus $T$ and $B$ be a Borel subgroup containing $T$.  
Let $N$ be the normalizer of $T$ in $G$ and $W = N/T$ be the Weyl
group.  The Weyl group $W$ acts on $\Omega^{*}_{T}(X)$ and the image of the
restriction map is $W$-invariant and so we have the morphism
$\mathrm{res}_{G,T}:\Omega^{*}_{G}(X)\to \Omega^{*}_{T}(X)^{W}$.  
In \cite{edd&gra-eit} it is shown that the analogous morphism in
equivariant Chow groups is an isomorphism rationally. 
We show now that the analogous statement holds for equivariant
algebraic cobordism. 

\begin{lemma}\label{rta-lemma}
Let $X$ be in $B-\mathbf{Sch}$. 
Suppose that the principle $B$-bundle $X \to X/B$ exists. 
Then $\Omega^{*}(X/B) \to \Omega^{*}(X/T)$ is an isomorphism.
In particular for any $X$ in $B-\mathbf{Sch}$, the restriction map
$\mathrm{res}_{B,T} : \Omega_*^B (X) \to \Omega_*^T (X)$ is an
isomorphism.  
\end{lemma}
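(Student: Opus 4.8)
The plan is to exhibit the quotient map $q\colon X/T\to X/B$ as a finite tower of torsors under line bundles and then to apply the extended homotopy axiom (EH) at each stage. I will prove the statement for the oriented Borel--Moore theory $\Omega_{*}$, which needs no smoothness hypothesis; when $X$ is smooth all the schemes below are smooth by faithfully flat descent, and re-indexing by codimension gives the stated isomorphism $\Omega^{*}(X/B)\to\Omega^{*}(X/T)$. Write $U$ for the unipotent radical of $B$, so $B=T\ltimes U$ and $N:=\dim B-\dim T=\dim U$. Since $k$ has characteristic zero, $U$ carries a filtration by closed subgroups $U=U_{0}\supseteq U_{1}\supseteq\cdots\supseteq U_{N}=\{e\}$ with each $U_{i}$ normal in $U$, stable under $T$ (hence normal in $B$), and $U_{i}/U_{i+1}\cong\G_{a}$; one obtains such a filtration by refining the lower central series of $U$ using the $T$-weight decomposition of its (abelian, hence vector-group) graded pieces. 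Put $H_{i}:=TU_{i}$, a closed subgroup of $B$ because $U_{i}$ is $T$-stable, so that $T=H_{N}\subseteq\cdots\subseteq H_{0}=B$.

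Because $X\to X/B$ is a principal $B$-bundle, for every closed subgroup $H\subseteq B$ the quotient $X/H$ exists in $\mathbf{Sch}_{k}$, the morphism $X\to X/H$ is a principal $H$-bundle, and $X/H\cong X\times^{B}(B/H)$; in particular $X/T\to X/B$ is the bundle with fibre $B/T\cong U\cong\A^{N}$ associated with $X$, and the chain $\{H_{i}\}$ yields a factorisation $q\colon X/T=X/H_{N}\to X/H_{N-1}\to\cdots\to X/H_{0}=X/B$. The heart of the argument is to check that each $q_{i}\colon X/H_{i+1}\to X/H_{i}$ is a torsor under a line bundle. The groups $H_{i}$ are special (successive extensions of a torus by copies of $\G_{a}$), so $X\to X/H_{i}$ is Zariski-locally trivial; trivialising $X\to X/B$ over an open $V$, the map $q_{i}$ restricts over the corresponding open of $X/H_{i}$ to $\mathrm{id}\times(B/H_{i+1}\to B/H_{i})$, which under the identifications $B/H_{j}\cong U/U_{j}$ is the linear projection of affine spaces with one-dimensional fibre $U_{i}/U_{i+1}\cong\A^{1}$; the gluing data, coming from the $B$-action on $B/H_{i+1}$, act on this fibre by an affine transformation (scaling by the character through which $T$ acts on $U_{i}/U_{i+1}$, followed by a translation). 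Thus $q_{i}$ is a Zariski-locally trivial affine-line bundle, hence a torsor under its line bundle of linear parts. By (EH) for $\Omega_{*}$, each $q_{i}^{*}\colon\Omega_{*}(X/H_{i})\to\Omega_{*+1}(X/H_{i+1})$ is an isomorphism; composing and using functoriality of l.c.i.\ pull-backs shows $q^{*}\colon\Omega_{*}(X/B)\to\Omega_{*+N}(X/T)$ is an isomorphism.

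For the final assertion I fix a good system of representations $\{(V_{i},\Theta_{i})\}$ for $B$. For each $i$, $B$ acts freely on $X\times\Theta_{i}$ and, by Lemma~\ref{des-pro}, the quotient $X\times^{B}\Theta_{i}=(X\times\Theta_{i})/B$ exists in $\mathbf{Sch}_{k}$, so $X\times\Theta_{i}\to X\times^{B}\Theta_{i}$ is a principal $B$-bundle. Applying the first part of the lemma (in its $\Omega_{*}$-form) with $X\times\Theta_{i}$ in place of $X$ shows that $\Omega_{*}(X\times^{B}\Theta_{i})\to\Omega_{*+N}(X\times^{T}\Theta_{i})$ is an isomorphism for every $i$; these maps are compatible with the transition maps $\phi_{ij}^{*}$ of the two inverse systems by functoriality of l.c.i.\ pull-backs, so passing to inverse limits shows that $\mathrm{res}_{B,T}\colon\Omega_{*}^{B}(X)\to\Omega_{*}^{T}(X)$ — which is by construction induced by the smooth maps $X\times^{T}\Theta_{i}\to X\times^{B}\Theta_{i}$ — is an isomorphism.

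The step I expect to be the genuine obstacle is the middle one: showing that each $q_{i}$ is a torsor under a \emph{vector} bundle rather than merely a fibration by affine spaces. This is exactly where the speciality of $B$ (to pass from étale-local to Zariski-local triviality) and the explicit affine description of the $B$-action on $B/H_{i+1}$ are used. Producing the filtration $\{U_{i}\}$ with $U_{i}$ normal in $U$, stable under $T$, and $U_{i}/U_{i+1}\cong\G_{a}$ all at once is standard unipotent-group structure theory but deserves a careful statement; the verifications that the intermediate quotients lie in $\mathbf{Sch}_{k}$ and that $X\times\Theta_{i}$ lies in $B-\mathbf{Sch}_{k}$ are routine given the conventions of $\S\ref{gcat}$.
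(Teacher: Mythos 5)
Your proof is correct and takes essentially the same route as the paper, whose entire argument is: since $B/T$ is an affine space, $X/T \to X/B$ is an affine-space bundle, so the extended homotopy axiom applies, and the statement about $\mathrm{res}_{B,T}$ follows by applying this to $X\times U_i$ and passing to the limit. Your refinement — factoring $X/T\to X/B$ through the intermediate quotients $X/TU_i$ as a tower of Zariski-locally trivial $\A^1$-fibrations with affine transition functions, hence torsors under line bundles — supplies precisely the justification that (EH) requires a torsor under a vector bundle rather than a bare affine-space fibration, a point the paper's one-line proof leaves implicit.
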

\begin{proof}
Since $B/T$ is an affine space, the map $X/T \to X/B$ is an affine
space bundle.  
The result follows from the extended homotopy axiom for cobordism. 
\end{proof}

Using the isomorphism in the previous lemma we transport the $W$-action on
$\Omega_{T}^{*}(X)$ to an action on $\Omega_{B}^{*}(X)$.  

\begin{lemma}
Let $G$ be a connected, reductive linear algebraic group. 
Then $\Omega^{*}(G/B)^{W}_{\Q} \cong \Omega^{*}(k)_{\Q}$.
\end{lemma}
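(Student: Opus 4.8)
The plan is to compute $\Omega^{*}(G/B)_{\Q}$ directly and identify the $W$-invariants. First I would recall that $G/B$ is a smooth projective variety which admits a cellular (Bruhat) decomposition into Schubert cells $BwB/B \cong \A^{\ell(w)}$ indexed by $w \in W$. By Proposition \ref{invbase}, $\Omega_{*}(G/B)$ is a free $\Omega^{*}(k)$-module with basis given by (resolutions of) the Schubert varieties, one in each degree $\ell(w)$; dualizing, $\Omega^{*}(G/B)_{\Q}$ is a free $\Omega^{*}(k)_{\Q}$-module of rank $|W|$, with a distinguished rank-one summand $\Omega^{*}(k)_{\Q}\cdot 1$ in degree $0$ corresponding to the fundamental class (the big cell $w_0$ in homological terms, the point class $w=e$ dually).

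Next I would pin down the $W$-action. The Weyl group acts on $\Omega^{*}_{T}(k) = \Omega^{*}_{B}(k)$, hence via $\mathrm{res}_{G,B}$ and the map $\Omega^{*}(G/B) = \Omega^{*}_{G}(G/B) \to \Omega^{*}_{B}(G/B)$ — using that $G/B$ with its left $G$-action has $G \times^{G} U_i \cong U_i$ so $\Omega^{*}_G(G/B) \cong \Omega^*(G/B)$, while $\Omega^{*}_{B}(G/B) \cong \Omega^{*}_{T}(\mathrm{pt})$ — there is a natural action on $\Omega^{*}(G/B)_{\Q}$. The key numerical input is that the rank of the invariants $\Omega^{*}(G/B)^{W}_{\Q}$ over $\Omega^{*}(k)_{\Q}$ equals $1$. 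To see this I would pass to the associated graded / Chow realization: by Example \ref{exa-tt}, $\Omega^{*}(G/B)_{\Q} \cong \mathrm{CH}^{*}(G/B)_{\Q}\otimes_{\Z}\Q[\mathbf{t}]$, and the $W$-action is compatible with the classical $W$-action on $\mathrm{CH}^{*}(G/B)_{\Q}$. The classical fact (Borel) is that $\mathrm{CH}^{*}(G/B)_{\Q} \cong \mathrm{Sym}(X^{*}(T))_{\Q}/(\text{positive-degree }W\text{-invariants})$ is the coinvariant algebra, whose $W$-invariants are exactly $\Q$ in degree $0$. Therefore $\left(\mathrm{CH}^{*}(G/B)_{\Q}\right)^{W} = \Q$, and tensoring with $\Q[\mathbf{t}]$ (on which $W$ acts trivially) gives $\Omega^{*}(G/B)^{W}_{\Q} \cong \Q[\mathbf{t}]_{\Q} \cong \Omega^{*}(k)_{\Q}$.

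Finally I would check the identification is realized by the obvious map: the structural pull-back $\Omega^{*}(k)_{\Q} \to \Omega^{*}(G/B)_{\Q}$, $1 \mapsto 1_{G/B}$, lands in the $W$-invariants (the unit is fixed by any ring automorphism over $\Omega^{*}(k)$), it is split injective since $G/B \to \mathrm{Spec}(k)$ admits a section (a rational point, e.g. the base point $eB$), and by the rank count its image is all of $\Omega^{*}(G/B)^{W}_{\Q}$; one can also see surjectivity by averaging, since $\frac{1}{|W|}\sum_{w\in W} w$ is a projector onto the invariants and kills everything of positive degree. The main obstacle is establishing that the $W$-action on $\Omega^{*}(G/B)_{\Q}$ matches the classical one on $\mathrm{CH}^{*}(G/B)_{\Q}$ under $\Theta_{\exp}$ — i.e. that the action defined here via the approximation-scheme construction and Lemma \ref{rta-lemma} agrees, through naturality of $\Theta_{\exp}$ as a morphism of Borel–Moore homology theories, with the geometric Weyl action coming from $N/T$ acting on $G/B$; once that compatibility is in hand the invariant-theoretic computation is purely classical.
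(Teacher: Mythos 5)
Your overall strategy is viable but genuinely different from the paper's, and it has one incorrect step plus one gap you yourself flag. The paper does not compute the $W$-invariants of $\Omega^{*}(G/B)_{\Q}$ by identifying the action with the Borel coinvariant-algebra action; instead it (i) reduces to $k=\C$ via Proposition \ref{invbase}, (ii) uses the principal $W$-bundle $G/T\to G/N$ and the transfer identities $\pi'_{*}\pi^{*}=\pi^{*}\pi'_{*}=|W|$ to identify $\Omega^{*}(G/T)^{W}_{\Q}$ with $\Omega^{*}(G/N)_{\Q}$ (and likewise for Chow), (iii) gets $\mathrm{CH}^{*}(G/B;\Q)^{W}=\Q$ from the $W$-equivariant cycle map and the topological fact $\mathrm{H}^{*}(G/B;\Q)^{W}=\Q$, hence $\mathrm{CH}^{*}(G/N)_{\Q}=\Q$, and (iv) applies $\Theta_{\exp}$ of Example \ref{exa-tt} to the single space $G/N$, where no equivariance of $\Theta_{\exp}$ is needed. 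This detour through $G/N$ is precisely what lets the paper avoid the compatibility statement you call ``the main obstacle''; your route needs $W$-equivariance of $\Theta_{\exp}$ (fine, since $W$ acts by pull-back along automorphisms of $G/T$ and $\Theta_{\exp}$ is natural) \emph{and} the classical identification of the geometric $W$-action on $\mathrm{CH}^{*}(G/T)_{\Q}\cong\mathrm{CH}^{*}(G/B)_{\Q}$ with the action on the coinvariant algebra in the Borel presentation (a genuine classical input, e.g. Demazure, which you assert but do not prove). With that input supplied, your argument does work, and it has the merit of staying over the ground field instead of reducing to $\C$.

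The concrete error is in your description of the $W$-action. The claim that $G\times^{G}U_{i}\cong U_{i}$ gives $\Omega^{*}_{G}(G/B)\cong\Omega^{*}(G/B)$ is false: that isomorphism applies to the free $G$-scheme $G$, not to $G/B$, and in fact $\Omega^{*}_{G}(G/B)\cong\Omega^{*}_{B}(k)\cong\Omega^{*}_{T}(k)\cong\Omega^{*}(k)[[t_{1},\dots,t_{r}]]$ by Theorem \ref{ind}, which is not $\Omega^{*}(G/B)$ (already for $G=T$ a torus, $G/B=\mathrm{Spec}\,k$ but $\Omega^{*}_{T}(k)\neq\Omega^{*}(k)$). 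Likewise $\Omega^{*}_{B}(G/B)\not\cong\Omega^{*}_{T}(\mathrm{pt})$. The $W$-action relevant to the lemma is not obtained this way: it is the geometric action of $W=N/T$ on $G/T$ by right translation, transported to $\Omega^{*}(G/B)$ through the affine-bundle isomorphism $\Omega^{*}(G/B)\xrightarrow{\sim}\Omega^{*}(G/T)$ of Lemma \ref{rta-lemma}. Once you replace your equivariant-cobordism description of the action by this geometric one, the rest of your computation (Bruhat cellularity, $\Theta_{\exp}$, Borel's description of $\mathrm{CH}^{*}(G/B)_{\Q}^{W}=\Q$, and the unit map realizing the isomorphism) goes through, modulo citing the classical compatibility mentioned above.
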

\begin{proof}
Base change induces a $W$-equivariant isomorphism
$\Omega^{*}(G/B) \cong \Omega^{*}((G/B)_{L})$ for any field extension $L/k$ by Proposition \ref{invbase} because $G/B$ is cellular. Because $G$ and $T$ are determined by root
datum we may assume that $k=\C$. 
Specifically, if $k\subseteq \C$ then we have 
$\Omega^{*}(G/B)\iso\Omega^{*}((G/B)_{\C})$. 
If $\C\subseteq k$ then there is a split, connected reductive group $G'$ and Borel
subgroup $B'$ which are defined over $\C$ and are such that
$(G'/B')_{k} = G/B$ and so $\Omega^{*}(G/B) =
\Omega^{*}((G'/B')_{k})$.  

Consider the principal $W$-bundle $\pi:G/T\to G/N$. 
We can regard $\pi^{*}$ as the morphism
$\Omega^{*}(G/N)_{\Q}\to \Omega^{*}(G/T)_{\Q}^{W}$. 
Write $\pi'_{*}$ for the restriction of $\pi_{*}$ to
$\Omega^{*}(G/T)_{\Q}^{W}$. 
Since $\pi'_{*}\pi^{*}$ and $\pi^{*}\pi'_{*}$ coincide with
multiplication by $|W|$ we have that $\pi^{*}$ is an isomorphism.
Similarly we have the isomorphisms 
$\pi^{*} : \mathrm{CH}^{*} (G/N)_{\Q} \to
\mathrm{CH}^{*} (G/T)_{\Q}^{W} = \mathrm{CH}^{*} (G/B)_{\Q}^{W}$.  

The cycle map $\mathrm{CH}^{*} (G/B) \to \mathrm{H}^{*} (G/B)$
is a $W$-equivariant isomorphism \cite[Example 19.1.11]{ful-inter}. 
By \cite[Chapter III, $\S$1 (B)]{hsiang-book} we have 
$\mathrm{H}^{*} (G/B; \Q)^{W} = \Q$ and so 
$\mathrm{CH}^{*}( G/B)^{W}_{\C}$ from which it follows that
$\mathrm{CH}^{*} (G/N)_{\Q}= \Q$. 
Now from the isomorphism in Example \ref{exa-tt} we obtain the
isomorphism $\Omega^{*}(\C)_{\Q}=\Omega^{*}(G/N)_{\Q}$ and so  
$\Omega^{*}(\C)_{\Q} = \Omega^{*}(G/T)_{\Q}^{W}=\Omega^{*}(G/B)_{\Q}^{W}$.
\end{proof}

\begin{theorem}\label{the-loc}
Assume $G$ is a connected, reductive linear algebraic group.
Let $X$ be in $G-\mathbf{Sm}_k$. 
Then there is an isomorphism 
\[
  \mathrm{res}_{G,T}:\Omega^*_{G}(X)_{\Q} \to \Omega^*_{T}(X)_{\Q}^{W}
\]
of $\Omega^{*} (k)_{\mathbb{Q}}$-modules. 
If $G$ is special then 
$\mathrm{res}_{G,T}:\Omega^*_{G}(X) \to \Omega^*_{T}(X)^{W}$ is injective. 
\end{theorem}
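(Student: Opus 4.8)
The plan is to reduce the theorem to the Leray--Hirsch statement of Proposition~\ref{lhrat} together with the preceding lemma computing $\Omega^{*}(G/B)^{W}_{\Q}$. First I would replace $T$ by $B$: Lemma~\ref{rta-lemma} identifies $\Omega^{*}_{T}(X)$ with $\Omega^{*}_{B}(X)$ integrally, and this identification is precisely how the $W$-action is transported to $\Omega^{*}_{B}(X)$, so it is $W$-equivariant by construction. Since restriction maps compose, $\mathrm{res}_{G,T}=\mathrm{res}_{B,T}\circ\mathrm{res}_{G,B}$ with $\mathrm{res}_{B,T}$ an isomorphism, so it is enough to analyse $\mathrm{res}_{G,B}\colon\Omega^{*}_{G}(X)\to\Omega^{*}_{B}(X)$. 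At level $i$ this is the pullback $\rho_{i}^{*}$ along $\rho_{i}\colon X\times^{B}U_{i}\to X\times^{G}U_{i}$, and using the identification $X\times^{B}U_{i}\iso X\times^{G}(U_{i}\times G/B)$ the map $\rho_{i}$ is the $G/B$-bundle associated to the principal $G$-bundle $X\times U_{i}\to X\times^{G}U_{i}$; hence $\rho_{i}$ is a projective, locally isotrivial fibration of smooth varieties with fibre the smooth projective cellular variety $G/B$, and it is Zariski-locally trivial when $G$ is special.

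Next I would apply Proposition~\ref{lhrat}. For each $i$, choosing a basis of $\Omega^{*}(G/B)_{\Q}$ containing $1$, it produces an isomorphism
\[
  \Psi_{i}\colon\Omega^{*}(X\times^{G}U_{i})_{\Q}\otimes_{\Omega^{*}(k)_{\Q}}\Omega^{*}(G/B)_{\Q}\xrightarrow{\cong}\Omega^{*}(X\times^{B}U_{i})_{\Q}
\]
under which $\rho_{i}^{*}$ is the inclusion of the summand $-\otimes 1$; in particular $\rho_{i}^{*}$ is split injective. Taking the inverse limit, which is left exact, shows that $\mathrm{res}_{G,B}$, and hence $\mathrm{res}_{G,T}$, is injective. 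When $G$ is special, $\rho_{i}$ is Zariski-locally trivial, so part~(3) of Proposition~\ref{lhrat} gives the same conclusion integrally; this is the last assertion of the theorem.

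For surjectivity onto the $W$-invariants (rationally) I would compute $\Omega^{*}_{B}(X)_{\Q}^{W}$ level by level. Geometrically the $W$-action arises because $N$ acts on $X\times U_{i}$ through the restricted $G$-action, so $W=N/T$ acts on $X\times^{T}U_{i}$ and the structure map $X\times^{T}U_{i}\to X\times^{G}U_{i}$ is $W$-invariant; transporting along Lemma~\ref{rta-lemma} gives the $W$-action on $\Omega^{*}(X\times^{B}U_{i})_{\Q}$. I would then check that $\Psi_{i}$ is $W$-equivariant, $W$ acting on the source through the second tensor factor by the standard representation on $\Omega^{*}(G/B)_{\Q}$: the action is trivial on $\rho_{i}^{*}\Omega^{*}(X\times^{G}U_{i})_{\Q}$ (those classes come from the $W$-invariant base) and realizes the fibre representation on a complementary $W$-stable submodule. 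Granting this, and using that $|W|$ is invertible so that $(M\otimes_{\Omega^{*}(k)_{\Q}}N)^{W}=M\otimes_{\Omega^{*}(k)_{\Q}}N^{W}$ whenever $W$ acts only on $N$, one obtains $\Omega^{*}(X\times^{B}U_{i})_{\Q}^{W}\iso\Omega^{*}(X\times^{G}U_{i})_{\Q}\otimes_{\Omega^{*}(k)_{\Q}}\Omega^{*}(G/B)_{\Q}^{W}$, which by the preceding lemma is $\rho_{i}^{*}\Omega^{*}(X\times^{G}U_{i})_{\Q}$. Since the transition maps are $W$-equivariant, $(-)^{W}$ commutes with $\varprojlim_{i}$, so $\mathrm{res}_{G,B}$, hence $\mathrm{res}_{G,T}$, is an isomorphism onto the $W$-invariants.

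I expect the $W$-equivariance of $\Psi_{i}$ in the third step to be the main obstacle: one must fix the geometric model of the $W$-action and verify that it respects the Leray--Hirsch decomposition, acting trivially on the base factor and through the fibre on the rest. This is exactly the point treated in the Chow-group setting in \cite{edd&gra-eit}, and I would follow that argument. The Mittag--Leffler property (Lemma~\ref{ML}) keeps the relevant towers exact in the limit, and the preceding lemma, proved by reduction to $\C$ and comparison with singular cohomology, is what forces the fibre contribution to collapse to $\Omega^{*}(k)_{\Q}$.
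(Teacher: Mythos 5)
Your reduction from $T$ to $B$ via Lemma \ref{rta-lemma}, the identification of $X\times^{B}U_{i}\to X\times^{G}U_{i}$ as a projective locally isotrivial $G/B$-fibration (Zariski-locally trivial for $G$ special), and the injectivity half via freeness with $1$ in the basis from Proposition \ref{lhrat} all match the paper. The problem is the surjectivity step. You base it on the claim that the Leray--Hirsch map $\Psi_{i}$ is $W$-equivariant when $W$ acts on the source only through the fibre factor $\Omega^{*}(G/B)_{\Q}$. For arbitrary lifts $e_{k,i}$ of a basis of the fibre this is simply not true: writing $w\cdot e_{k,i}=\sum_{j}\rho_{i}^{*}(a^{w}_{kj})\cup e_{j,i}$, restriction to a fibre only tells you that the images of the $a^{w}_{kj}$ in $\Omega^{*}(k(y))$ give the matrix of the $W$-action on $\Omega^{*}(G/B)$; the $a^{w}_{kj}$ themselves can contain genuine correction terms from the base, so the splitting need not be $W$-stable. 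What you would actually need is the existence of lifts spanning a $W$-stable complement realizing the fibre representation, and that is precisely the nontrivial point; it does not follow from Proposition \ref{lhrat} together with $\Omega^{*}(G/B)^{W}_{\Q}\cong\Omega^{*}(k)_{\Q}$. Note also that the filtration-by-fibre-degree trick one might try in the Chow setting is delicate here, since both the base and the cobordism coefficient ring carry classes whose degrees allow correction terms in both directions.

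The paper avoids this issue entirely: the proposition following Theorem \ref{the-loc} proves that $p^{*}:\Omega^{*}(Q)_{\Q}\to\Omega^{*}(Y/B)^{W}_{\Q}$ is surjective by Noetherian induction on $\dim Q$ using the localization sequence (with $(-)^{W}$ exact because $|W|$ is invertible), passing to an open set where the bundle is isotrivial, pushing forward along a finite \'etale $V\to U$ to reduce to the trivial bundle $V\times G/B$, and only there invoking the external-product surjection $\Omega^{*}(V)_{\Q}\otimes_{\Omega^{*}(k)_{\Q}}\Omega^{*}(G/B)_{\Q}\twoheadrightarrow\Omega^{*}(V\times G/B)_{\Q}$, which is manifestly $W$-equivariant because the geometric $W$-action is trivial on $V$ and standard on $G/B$; then $\Omega^{*}(G/B)^{W}_{\Q}=\Omega^{*}(k)_{\Q}$ collapses the invariants. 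So the correct mechanism is a localization-based reduction to the split case, not $W$-equivariance of the Leray--Hirsch decomposition; to complete your argument you would have to either carry out such an induction or actually construct a $W$-stable splitting, neither of which is supplied by the steps you cite.
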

\begin{proof}
 By Lemma \ref{rta-lemma} it suffices to show that 
$\Omega^*_{G}(X)_{\Q} \to \Omega^*_{B}(X)_{\Q}^{W}$ is an
isomorphism. By the following proposition we have  $\Omega^{*}(X
\times^G U_i)_{\Q}  \to \Omega^{*}(X \times^{B} U_i)_{\Q}^{W}$ is an
isomorphism for all $i$. 
Fixed points and inverse limits commute we are done.
\end{proof}

\begin{proposition}
Let $Y\to Q$ be a principle $G$-bundle, with $Y$ smooth. 
Write $p:Y/B\to Q$ for the resulting principal $G/B$-bundle. 
Then 
$$
p^{*}:\Omega^{*}(Q)_{\Q}\to \Omega^{*}(Y/B)_{\Q}^{W}
$$
is an isomorphism. If $G$ is special then $p^{*}:\Omega^{*}(Q)\to
\Omega^{*}(Y/B)^{W}$ is injective. 
\end{proposition}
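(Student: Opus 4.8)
The plan is to apply the cobordism Leray--Hirsch theorem (Proposition~\ref{lhrat}) to $p\colon Y/B\to Q$ and then pass to $W$-invariants, using the lemma above that $\Omega^{*}(G/B)^{W}_{\Q}\cong\Omega^{*}(k)_{\Q}$. First I would record the geometric input: since $Y\to Q$ is a principal $G$-bundle it is locally isotrivial with fibre $G$, and by descent $Q$, and then $Y/B$, is smooth; hence $p\colon Y/B\to Q$ is a projective, locally isotrivial fibration between smooth varieties with fibre the flag variety $F=G/B$, which is smooth, projective and cellular (Bruhat decomposition).

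So Proposition~\ref{lhrat}(2) provides an isomorphism $\Psi\colon\Omega^{*}(Q)_{\Q}\otimes_{\Omega^{*}(k)_{\Q}}\Omega^{*}(F)_{\Q}\xrightarrow{\sim}\Omega^{*}(Y/B)_{\Q}$, $\Psi(\sum\alpha_{i}\otimes\eta_{i})=\sum p^{*}(\alpha_{i})\cup e_{i}$, built from a finite $\Omega^{*}(k)_{\Q}$-basis $\{\eta_{i}\}$ of $\Omega^{*}(F)_{\Q}$ (which exists by Proposition~\ref{invbase}) and any lifts $e_{i}$ of the $\eta_{i}$ along the restriction $\iota_{y}^{*}\colon\Omega^{*}(Y/B)_{\Q}\twoheadrightarrow\Omega^{*}(F_{y})_{\Q}\cong\Omega^{*}(F)_{\Q}$. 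Taking $\eta_{0}=1$ and $e_{0}=1_{Y/B}$ identifies $p^{*}$ with the inclusion of a direct summand, so $p^{*}$ is (split) injective; moreover the $W$-action on $\Omega^{*}(Y/B)$, transported from the free $N/T$-action on $Y/T$ along the affine bundle $Y/T\to Y/B$, is by automorphisms over $Q$, so $w^{*}p^{*}=p^{*}$ and the image of $p^{*}$ is $W$-invariant.

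For the rational statement it remains to show $p^{*}$ is onto $\Omega^{*}(Y/B)^{W}_{\Q}$, and the plan is to arrange $\Psi$ to be $W$-equivariant. The point is that $\iota_{y}^{*}$ is a $W$-equivariant surjection of $\Omega^{*}(k)_{\Q}[W]$-modules, the $W$-action on the target being exactly the one of the lemma above (the $W$-action preserves the fibre $F_{y}$ and restricts there to that action). Writing $\Omega^{*}(F)_{\Q}=\Omega^{*}(k)_{\Q}\cdot 1\oplus\ker(e_{W})$ for the averaging idempotent $e_{W}=\frac{1}{|W|}\sum_{w}w$, both summands are $W$-stable and projective over $\Omega^{*}(k)_{\Q}$, so the identity of $\Omega^{*}(F)_{\Q}$ admits an $\Omega^{*}(k)_{\Q}$-linear lift along $\iota_{y}^{*}$ sending $1$ to $1_{Y/B}$; averaging it over $W$ (legitimate since $1/|W|\in k$) yields a $W$-equivariant $\Omega^{*}(k)_{\Q}$-linear section $s$ with $s(1)=1_{Y/B}$. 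Taking $e_{i}:=s(\eta_{i})$, the resulting $\Psi$ intertwines $w^{*}$ on $\Omega^{*}(Y/B)_{\Q}$ with the action on $\Omega^{*}(Q)_{\Q}\otimes_{\Omega^{*}(k)_{\Q}}\Omega^{*}(F)_{\Q}$ that is trivial on the first factor and is the given action on the second; since $(-)^{W}$ is computed by $e_{W}$ it commutes with this tensor product, so $\Psi$ restricts to $\Omega^{*}(Q)_{\Q}\otimes_{\Omega^{*}(k)_{\Q}}\Omega^{*}(F)^{W}_{\Q}\xrightarrow{\sim}\Omega^{*}(Y/B)^{W}_{\Q}$. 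As $\Omega^{*}(F)^{W}_{\Q}=\Omega^{*}(k)_{\Q}\cdot 1$ by the lemma above and $\Psi(\alpha\otimes 1)=p^{*}(\alpha)$, this is the desired isomorphism.

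When $G$ is special the $G$-bundle $Y\to Q$ is Zariski-locally trivial, hence so is $p\colon Y/B\to Q$, and Proposition~\ref{lhrat}(3) makes $\Psi$ an integral isomorphism; exactly as above $\Psi(-\otimes 1)=p^{*}$ is a split injection whose image lies in $\Omega^{*}(Y/B)^{W}$, so $p^{*}\colon\Omega^{*}(Q)\to\Omega^{*}(Y/B)^{W}$ is injective (one loses only surjectivity, since $\Omega^{*}(F)^{W}$ need not equal $\Omega^{*}(k)$ integrally). I expect the main obstacle to be essentially organisational: checking that the fibrewise $W$-action is compatible with the global one and that the averaged section behaves as claimed, so that $W$-invariants genuinely commute with the Leray--Hirsch decomposition; no new idea beyond Propositions~\ref{lhrat} and~\ref{invbase} and the lemma above should be required.
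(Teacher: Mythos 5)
Your argument is correct, but it proves surjectivity onto the $W$-invariants by a genuinely different mechanism than the paper. The injectivity step is the same in both (freeness of $\Omega^{*}(Y/B)$ over $\Omega^{*}(Q)$ with $1$ as part of a basis, via Proposition \ref{lhrat}, resp.\ its integral form (3) when $G$ is special). For surjectivity, however, the paper does not make the Leray--Hirsch decomposition $W$-equivariant: it runs a Noetherian induction on $\dim Q$ using the localization sequence, a transfer along a finite \'etale cover $g:V\to U$ trivializing the bundle (where $g_{*}$ is rationally surjective), and, in the trivial-bundle case, only the \emph{surjectivity} of the external product map $\Omega_{*}(V)_{\Q}\otimes_{\Omega^{*}(k)_{\Q}}\Omega^{*}(G/B)_{\Q}\to\Omega_{*}(V\times G/B)_{\Q}$ (extracted from the proof of Proposition \ref{lhrat}) together with exactness of $(-)^{W}$ and the lemma $\Omega^{*}(G/B)^{W}_{\Q}=\Omega^{*}(k)_{\Q}$; notably this induction is carried out for $Y$ not necessarily smooth, which is forced because the closed complements $Z\subseteq Q$ appearing in the localization step are singular. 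Your route instead applies Proposition \ref{lhrat}(2) once, globally, and upgrades it to a $W$-equivariant isomorphism by averaging a section of $\iota_{y}^{*}$, after which taking invariants (legitimately commuted past the tensor product via the idempotent $e_{W}$) gives the result in one stroke. What your approach buys is a structural, induction-free proof and a cleaner integral statement via \ref{lhrat}(3); what it costs is exactly the compatibility checks you flag -- that $Q$ and $Y/B$ are smooth (true here since $Y$ is smooth, by descent), that $p$ is projective and locally isotrivial with fibre $G/B$, that the transported $W$-action on $\Omega^{*}(Y/B)$ is $\Omega^{*}(Q)$-linear and restricts along $\iota_{y}^{*}$ (for a point $y$ over which the torsor trivializes, so that $F_{y}\cong (G/B)_{k(y)}$) to the standard action used in the lemma -- none of which is problematic, but all of which the paper's localization/transfer argument sidesteps, at the price of losing the explicit equivariant splitting and of needing the slightly more general, non-smooth form of the surjectivity statement inside the induction.
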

\begin{proof}
By Proposition \ref{lhrat}, $\Omega^{*}(Y/B)_{\Q}$ is a
free $\Omega^{*}(Q)_{\Q}$-module. We may
take $1\in \Omega^{*}(Y/B)_{\Q}$ as one of the basis elements. 
It follows that $p^{*} :\Omega^{*}(Q)\to \Omega^{*}(Y/B)^{W}$ is
injective. 

Let $p:Y/B\to Q$ be obtained from a principal $G$-bundle $Y\to Q$ with
$Y$ not necessarily smooth. 
We proceed by induction to show that 
$p^{*}$ is surjective, the zero-dimensional case being done. 
We may assume that $Q$ is irreducible. 
Let $U\subseteq Q$ be an open subscheme over which the bundle is
isotrivial and let $Z$ be its closed complement. 
Consider the commutative diagram with exact rows
$$
\xymatrix{
\Omega_{*}(Z)_{\Q} \ar[r]\ar[d] & \Omega_{*}(Q)_{\Q} \ar[r]\ar[d] &
\Omega_{*}(U)_{\Q} \ar[d]\ar[r] & 0 \\ 
\Omega_{*}(Y_{Z})_{\Q}^{W} \ar[r] & \Omega_{*}(Y)_{\Q}^{W} \ar[r] &
\Omega_{*}(Y_{U})_{\Q}^{W} \ar[r]& 0, 
}
$$
where the bottom row is exact because taking fixed points is an exact
functor when $|W|$ is invertible. 
We see that it suffices to show that 
$\Omega_{*}(U)_{\Q}\to \Omega_{*}(Y_{U})_{\Q}^{W}$ is surjective. 
Let $g:V\to U$ be a finite \'etale morphism over which the bundle becomes
trivial. 
Consider the commutative square  
$$
\xymatrix{
\Omega_{*}(V)_{\Q} \ar[r]^{g_{*}}\ar[d] & \Omega_{*}(U)_{\Q}\ar[d]  \\
\Omega_{*}(V\times G/B)_{\Q}^{W} \ar[r] & \Omega_{*}(Y_{U})_{\Q}^{W} .
}
$$
The horizontal arrows are surjective and so we are reduced to the case
of the trivial bundle. 
In the proof of Proposition \ref{lhrat} it is shown that the map
$\Omega^{*}(V)_{\Q}\otimes_{\Omega^{*}(k)_{\Q}}\Omega^{*}(G/B)_{\Q}\to
\Omega^{*}(V\times G/B)_{\Q}$ induced by external product is a
surjection. 
This is an equivariant map and so surjectivity follows from the
isomorphism
\[
  (\Omega^{*}(V)_{\Q}\otimes_{\Omega^{*}(k)_{\Q}}\Omega^{*}(G/B)_{\Q})^{W}
=
  \Omega^{*}(V)_{\Q}\otimes_{\Omega^{*}(k)_{\Q}}\Omega^{*}(G/B)_{\Q}^{W}
=
  \Omega^{*}(V)_{\Q}.
\]  
If $G$ is special then $p^{*}$ is injective by Proposition \ref{lhrat}
and its image is contained in $\Omega^{*}(Y/B)^{W}$.  
\end{proof}

\begin{remark}\label{rem-gsp}
If $G$ is special, $T\subseteq G$ a maximal torus and we additionally
assume that the restriction map $\Omega^{*}_{T}(X) \to
\Omega^{*}_{T}(X^{T})$ is injective and $\Omega^{*}(X^{T})$ is
torsion-free then we can see that  
$\mathrm{res}_{G,T}:\Omega^{*}_{G}(X) \to \Omega^{*}_{T}(X)^{W}$ is an
isomorphism, which we explain below. 
In particular the case $X=\mathrm{spec}(k)$ shows that for $G$ special
we have the equality 
\[
  \Omega_G^{*}(k) = \Omega_T^{*}(k)^{W},
\]
giving the cobordism generalization of the result of Edidin-Graham for
integral Chow groups of classifying spaces of special groups
\cite{edd&gra-char}. 

Fix a good system $\{(V_{i},U_{i})\}$ of $G$-representations. 
Let $(x_{i})\in \Omega^{*}_{T}(X)^{W}$ be a fixed class. 
Write $p_{i}:X\times^{T}U_{i} \to X\times^{G}U_{i}$  for the projection. 
By Proposition \ref{lhrat} and Lemma \ref{rta-lemma} we have that
$\Omega^{*}(X\times^{T}U_{i})$ is a free
$\Omega^{*}(X\times^{G}U_{i})$-module. 
A basis is given by choosing elements which restrict to a basis of
$\Omega^{*}(G/T)$. 
We may choose a basis $e_{r,i}\in\Omega^{*}(X\times^{G}U_{i})$
inductively so that $e_{r,j}$ is mapped to $e_{r,i}$ under the
transition maps in the inverse system. Moreover we may choose each
$e_{1,i}=1$.  
Thus, we can write  $x_{i}= \sum_{k} p_{i}^{*}(y_{k,i})\cup e_{k,i}$ 
with $y_{k,i}\in \Omega^{*}(X\times^{G}U_{i})$.

By the previous proposition, for each $i$ there is an integer $M_{i}$
and a $y_i \in \Omega^{*}(X\times^{G}U_{i})$ so that 
$M_{i}x_{i} = p_{i}^{*}(y_i)$. 
Comparing the two expressions for $M_{i}x_{i}$ we see that 
$M_{i}x_{i} = M_{i} p_i^{*}(y_{1,i})$. 
Write $a_{i}=y_{1,i}$. 
Then for all $i$ the element
$x_{i}-p_{i}^{*}(a_{i})\in\Omega^{*}(X\times^{T}U_{i})$ is torsion. 
Note that the sequence $(a_{i})$ defines an element in
$\Omega^{*}_{G}(X)$. 
Hence we have a sequence $(x_{i}-p_{i}^{*}(a_{i}))$ of torsion
elements representing a class in  $\Omega^{*}_{T}(X)$. 

Write $c_{i}=x_{i}-p_{i}^{*}(a_{i})$. 
Since $\Omega^{*}_{T}(X)\subseteq \Omega^{*}_{T}(X^{T})$ we  also
write $(c_{i})\in \Omega^{*}_{T}(X^{T})$. 
Let $\{V'_{i}, U'_{i}\}$ be a good system of $T$-representations
with the property that $U_{i}/T$ is a product of projective spaces as in 
Example \ref{CRTA}. 
Consider the products $U_{i} \times U'_{j}$ and projections 
$f_{ij}: U_{i}\times U'_{j} \to U_{i}$ and 
$g_{ij}:U_{i}\times U'_{j} \to U'_{j}$. 
The maps of systems (indexed by pairs of integers)
$$
\{\Omega^{*}(X^{T}\times U_{i}/T)\} \xrightarrow{f_{ij}^{*}} 
\{ \Omega^{*}(X^{T}\times (U_{i}\times U'_{j})/T)\}
\xleftarrow{g_{ij}^{*}} \{\Omega^{*}(X^{T}\times U'_{j}/T)\} 
$$
induces an isomorphism on limits since each system computes
$\Omega^{*}_{T}(X^{T})$.  
Therefore there are $b_{j} \in \Omega^{*}(X^{T} \times U'_{j}/T)$ such
that $(f^{*}_{ij}(c_{i})) = (g^{*}_{ij}(b_{j}))$. 
The element $f^{*}_{ij}(c_{i})$ is torsion for all $i,j$.  
By Proposition \ref{prolim}, there are integers $C(i)$ and $D(j)$ such
that  $f^{*}_{ij}$ is an isomorphism for $j>C(i)$ and  $g_{ij}^{*}$ is
an isomorphism for $i>D(j)$.  
Since there is no torsion in $\Omega^{*}(X^{T}\times
U'_{j}/T)$ we must have that $f^{*}_{ik}(c_{i}) = 0$ for $i>D(k)$ but
then $f^{*}_{ij}(c_{i})= 0$ for $j>k$ as well. By taking $j>C(i)$
we conclude that $c_{i}=0$.  
\end{remark}


\section{Oriented Equivariant Borel-Moore Homology Theories}\label{ROT}

Our construction of equivariant algebraic cobordism
relies only on the general properties of an OBM theory and
the localization sequence. 
Thus, if $A_*$ is an OBM theory which has localization sequences
and $\{(V_{i},U_{i})\}$ is a good system of representations, we can
define the theory  
\[
  A_*^G (X) := \varprojlim_i A_* \left( X \times^G U_i \right)
\]
for any $X$ in $G-\mathbf{Sch}_k$. 
Similarly we define $A_{n}^G (X) = \varprojlim_i A_{n+\dim U_{i} -
  \dim G} ( X \times^G U_i )$. 
Because $A_{*}$ has  localization sequences the analogue of
Proposition \ref{prolim} is valid for $A_{*}$. 
This theory is then seen to be well-defined by reasoning as in Theorem
\ref{eacwelldef}.  

When the OCT $A^{*}$ classifies its formal group law, so that 
$\Omega^{*}(X)\otimes_{\Omega^{*}(k)}A^{*}(k) = A^{*}(X)$, define
\[
  A^{*}_{G}(X) 
= 
  \varprojlim_i A^* \left( X \times^G U_i \right)
\cong
  \Omega^{*}_{G}(X)\widehat{\otimes}_{\Omega^{*}(k)}A^{*}(k).
\]
Set $A^n_G (X) = \varprojlim_i A^n \left( X \times^G U_i \right)$.
Analogoues of all the computations and the results for equivariant
algebraic cobordism can be carried out for any such $A_*^G$.  

Define an 
\emph{oriented equivariant Borel-Moore theory} as a functor 
$A_*^G : G-\mathbf{Sch}'_k \to \mathbf{Ab}$
endowed with pull-backs maps for every $G$-equivariant l.c.i. morphism
satisfying the analogues of the properties listed in Sections 
\ref{sect-var}, \ref{sect-ls}, \ref{sect-pb} and \ref{sect-eh}. 

Similarly, an 
\emph{oriented equivariant cohomology theory} is a functor 
$A^*_G : (G-\mathbf{Sm}_k)^{op} \to \mathbf{Rng}$ 
endowed with morphisms $f_G : A^*_G (Y) \to A^{*+d}_G (X)$ of 
$A^*_G (k)$-modules for every projective morphism 
$f: Y \to X$ of relative dimension $d$ in $G-\mathbf{Sm}_k$, 
satisfying the analogues of the axioms listed in 
Section \ref{OCT}. 

\begin{remark}
If $A^G_*$ is an oriented equivariant Borel-Moore
theory arising from an OBM theory $A_*$, by construction we have a
canonical natural transformation $\Omega^G_* \to A^G_*$.
\end{remark}

\begin{example}
The universal additive OCT $\Omega^*_+$ induces the oriented
equivariant cohomology theory 
\[
  X \mapsto \Omega^*_{+ \, G} (X) = \varprojlim_i \Omega^*_+ 
  \left( X \times^G U_i \right).
\] 
Let $\mathrm{CH}_G^n$ be the $n$-th
equivariant Chow group as defined by Edidin and Graham 
\cite{edd&gra-eit}. 
The Chow group is the universal additive OCT theory (see 
Section \ref{UOCT}) and so there is $M_{n}$ such that 
\[
  \mathrm{CH}_G^n (X) 
:= 
  \mathrm{CH}^{n} \left( X \times^G U_i \right)
\cong
  \Omega_+^{n} \left( X \times^G U_i \right) 
\]
for any $i > M_{n}$. 
Since  $\mathrm{CH}^{n} (X \times^{G} U_{j}) \cong 
\mathrm{CH}^{n} ( X \times^{G} U_{i})$ for
$i,j >M_{n}$, by taking limits we get a natural isomorphism
$\Omega^{n}_{+ \, G}(X) \cong \mathrm{CH}^{n}_{G}(X)$.
This give us
\begin{equation*}
  \bigoplus_n \mathrm{CH}^{n}_{G}(X) 
= 
  \left( \bigoplus_n \Omega^{n}_{G}(X) \right)
  \widehat{\otimes}_{\Omega^{*}(k)} \Z. 
\end{equation*}
Also, from the remark above we obtain the commutative diagram
\[
  \xymatrix{
           \Omega^{n}_{G}(X) \ar[r]\ar[d]_{\mathrm{res}_{\Omega}^{G}}
           &  
           \mathrm{CH}_G^n (X)  \ar[d]^{\mathrm{res}}
             \\
           \Omega^{n}(X)\ar[r] &  \mathrm{CH}^n (X),
           }
\]
where the restriction map
$\mathrm{res} : \mathrm{CH}_G^n (X) \to \mathrm{CH}^n (X)$ 
is obtained by restricting to the fiber, as we did for 
$\mathrm{res}_{\Omega}^G$ in Section \ref{rest-fib}.
\end{example}

\begin{example}
The universal multiplicative periodic OCT $\Omega^{\times}_*$ induces
the oriented equivariant cohomology theory  
\[
  X \mapsto \Omega^*_{\times \, G} (X) = \varprojlim_i \Omega^*_{\times} 
  \left( X \times^G U_i \right).
\] 
Let $\mathrm{K}_G^0$ be the equivariant algebraic $K$-theory
defined by Thomason \cite{tho-ekt} as the Grothendieck group of the category
$G$-vector bundles (Section \ref{sect-gmod}). 
By descent the category of $G$-vector bundles on $X\times U_{i}$ is
equivalent to the category of vector bundles on $X \times^{G} U_{i}$. 

We thus have the induced natural isomorphism
$\mathrm{K}_{G}^{0} ( X \times U_{i} ) \cong 
\mathrm{K}^{0} ( X \times^{G} U_{i})$. 
Since $\mathrm{K} [\beta,\beta^{-1}]$ is the universal multiplicative
periodic OCT (see Section \ref{UOCT}), we can consider the composition
\[
  \mathrm{K}^0_G (X) 
\to
  \mathrm{K}^{0}( X \times^{G} U_{i})
\to 
  \mathrm{K}^{0}( X \times^{G} U_{i})[\beta,\beta^{-1}]
\cong
  \Omega^{*}_{\times}(X \times^G U_i).
\]
Taking limits defines the natural transformation
\[
  \mathrm{K}^0_G(X) \longrightarrow \Omega^*_{\times \, G}(X).
\]
Moreover, by forgetting the action we have a restriction map
$\mathrm{res}: \mathrm{K}^0_G \to \mathrm{K}^0$ which fits into a
commutative diagram 
\[
  \xymatrix{
            \mathrm{K}^0_G (X) \ar[r] \ar[d]_{\mathrm{res}} &
            \Omega^*_{\times \, G} (X) \ar[d]^{\mathrm{res}_G} \\
            \mathrm{K}^0 (X) \ar[r]_(.35){id \otimes 1} &
            \mathrm{K}^0 (X) [\beta,\beta^{-1}].
           }
\]
\end{example}


 \bibliography{referenc-eacs}
 \bibliographystyle{plain}

\end{document}